\documentclass[aap]{imsart}

\usepackage{graphicx} % Required for inserting images
\usepackage{wrapfig}
\usepackage[numbers]{natbib}

\usepackage{amsmath,amssymb,amsfonts,amsthm}
\usepackage{mathtools}
\usepackage{bm}
\usepackage{bbm}
\usepackage{psfrag}
\usepackage{color}
\usepackage{caption,subcaption}

\usepackage{enumitem}
\setlist{leftmargin=1.6em}

\usepackage[hidelinks]{hyperref} % add hyperlinks to cross-referenced elements

\newtheorem{theorem}{Theorem}
\newtheorem{proposition}{Proposition}
\newtheorem{assumption}{Assumption}
\newtheorem{lemma}{Lemma} 
\newtheorem{corollary}{Corollary}

\theoremstyle{definition}

\theoremstyle{remark}
\newtheorem{remark}{Remark}

\usepackage{graphicx}
\usepackage{xcolor}

\usepackage{algorithm}

\newcommand{\vh}[0]{\bm {h}}

\newcommand{\vp}[0]{\bm {p}}
\newcommand{\vv}[0]{\bm {v}}
\newcommand{\vx}[0]{\bm {x}}
\newcommand{\vy}[0]{\bm {y}}
\newcommand{\vz}[0]{\bm {z}}
\newcommand{\vu}[0]{\bm {u}}

\newcommand{\vW}[0]{\bm {W}}
\newcommand{\vX}[0]{\bm {X}}
\newcommand{\vY}[0]{\bm {Y}}

\newcommand{\vT}[0]{\bm{T}}

\newcommand{\vw}[0]{\bm{w}}

\newcommand{\vzero}[0]{\bm {0}}
\newcommand{\vone}[0]{\bm {1}}

\newcommand{\Cov}[0]{\mathrm{Cov}}
\newcommand{\Var}[0]{\mathrm{Var}}

\newcommand{\calF}[0]{\mathcal{F}}

\newcommand{\calH}[0]{\mathcal{H}}

\newcommand{\calX}[0]{\mathcal{X}}

\newcommand{\calY}[0]{\mathcal{Y}}

%KK I prefer using spt for support. int(supp) looks a bit long
\newcommand{\E}[0]{\mathbb{E}}

\newcommand{\R}[0]{\mathbb{R}}

\newcommand{\Prob}[0]{\mathbb{P}}

\newcommand{\vertiii}[1]{{\left\vert\kern-0.25ex\left\vert\kern-0.25ex\left\vert #1 
    \right\vert\kern-0.25ex\right\vert\kern-0.25ex\right\vert}}

% Special Letters

\newcommand{\cA}{\mathcal{A}}

\newcommand{\cC}{\mathcal{C}}

\newcommand{\cH}{\mathcal{H}}

\newcommand{\cN}{\mathcal{N}}

\newcommand{\cQ}{\mathcal{Q}}

\newcommand{\cT}{\mathcal{T}}

\newcommand{\cX}{\mathcal{X}}
\newcommand{\cY}{\mathcal{Y}}
\newcommand{\cZ}{\mathcal{Z}}

\newcommand{\BB}{\mathbb{B}}

\newcommand{\EE}{\mathbb{E}}

\newcommand{\GG}{\mathbb{G}}

\newcommand{\NN}{\mathbb{N}}

\newcommand{\PP}{\mathbb{P}}

\newcommand{\RR}{\mathbb{R}}

\newcommand{\vasti}{\bBigg@{3.5 }}
\newcommand{\vast}{\bBigg@{4}}
\newcommand{\Vast}{\bBigg@{5}}
\newcommand{\Vastt}{\bBigg@{7}}

\makeatother

% Math mode shorthands
\newcommand{\be}{\begin{equation}}
\newcommand{\ee}{\end{equation}}
\newcommand{\ba}{\begin{align}}
\newcommand{\ea}{\end{align}}
\newcommand{\baa}{\begin{align*}}
\newcommand{\eaa}{\end{align*}}

% Math mode commands
\newcommand{\argmin}{\mathop{\mathrm{argmin}}}

% Gaussian Notation

% Sliced distance Notation

% Probability Notation

%Kernel notation

% Additional special commands

\DeclareMathOperator{\interior}{int}
\DeclareMathOperator{\diam}{diam}
\DeclareMathOperator{\inte}{int}

\DeclareMathOperator{\dist}{dist}

\begin{document}

\begin{frontmatter}
\title{Stability and statistical inference for \\semidiscrete optimal transport maps}
\runtitle{Stability and inference for semidiscrete OT maps}

\begin{aug}
%%%%%%%%%%%%%%%%%%%%%%%%%%%%%%%%%%%%%%%%%%%%%%%
%% Only one address is permitted per author. %%
%% Only division, organization and e-mail is %%
%% included in the address.                  %%
%% Additional information can be included in %%
%% the Acknowledgments section if necessary. %%
%% ORCID can be inserted by command:         %%
%% \orcid{0000-0000-0000-0000}               %%
%%%%%%%%%%%%%%%%%%%%%%%%%%%%%%%%%%%%%%%%%%%%%%%
\author[A]{\fnms{Ritwik}~\snm{Sadhu}\ead[label=e1]{rs2526@cornell.edu}},
\author[B]{\fnms{Ziv}~\snm{Goldfeld}\ead[label=e2]{goldfeld@cornell.edu}},
\and
\author[A]{\fnms{Kengo}~\snm{Kato}\ead[label=e3]{kk976@cornell.edu}}
%%%%%%%%%%%%%%%%%%%%%%%%%%%%%%%%%%%%%%%%%%%%%%
%% Addresses                                %%
%%%%%%%%%%%%%%%%%%%%%%%%%%%%%%%%%%%%%%%%%%%%%%
\address[A]{Department of Statistics and Data Science,
Cornell University\printead[presep={,\ }]{e1,e3}}

\address[B]{School of Electrical and Computer Engineering, Cornell University\printead[presep={,\ }]{e2}}
\end{aug}

\begin{abstract}
We study statistical inference for the optimal transport (OT) map (also known as the Brenier map) from a known absolutely continuous reference distribution onto an unknown finitely discrete target distribution. We derive limit distributions for the $L^p$-error with arbitrary $p \in [1,\infty)$ and for linear functionals of the empirical OT map, together with their moment convergence. The former has a non-Gaussian limit, whose explicit density is derived, while the latter attains asymptotic normality. 
For both cases, we also establish consistency of the nonparametric bootstrap. The derivation of our limit theorems relies on new stability estimates of functionals of the OT map with respect to the dual potential vector, which may be of independent interest.  We also discuss applications of our limit theorems to the construction of confidence sets for the OT map and inference for a  maximum tail correlation.  \textcolor{black}{Finally, we show that, while the empirical OT map does not possess nontrivial weak limits in the $L^2$ space, it satisfies a central limit theorem in a dual H\"{o}lder space, and the Gaussian limit law attains the asymptotic efficiency bound.}
\end{abstract}

\begin{keyword}[class=MSC]
\kwd[Primary ]{60F05}
\kwd{62G20}
\kwd[; secondary ]{49J50}
\end{keyword}

\begin{keyword}
\kwd{Bootstrap}
\kwd{functional delta method}
\kwd{Hadamard directional derivative}
\kwd{limit distribution}
\kwd{optimal transport map}
\kwd{semidiscrete optimal transport}
\end{keyword}

\end{frontmatter}

\section{Introduction}

\subsection{Overview}

Optimal transport (OT) provides a versatile framework to compare probability measures and has seen a surge of applications in statistics, machine learning, and applied mathematics; see, e.g., \cite{panaretos2020invitation,peyre2019computational,santambrogio2017euclidean}, among many others. We refer to \cite{ambrosio2005,villani2008optimal,santambrogio15} as standard references on OT. For Borel probability measures $R$ and $P$ on $\R^d$ with finite second moments, the Kantorovich OT problem with quadratic cost reads as
\begin{equation}
\min_{\pi \in \Pi (R,P)}  \int \frac{1}{2} \| \vy-\vx \|^2 \, d \pi(\vy,\vx),
\label{eq: Kantorovich}
\end{equation}
where $\Pi (R,P)$ denotes the collection of couplings of $R$ and $P$ (i.e., each $\pi \in \Pi(R,P)$ is a probability measure on the product space with marginals $R$ and $P$). 
A central object of interest in OT theory is the OT map, or the Brenier map \cite{brenier1991polar}. 
For absolutely continuous $R$, the unique optimal solution $\pi^*$ to \eqref{eq: Kantorovich} exists and  concentrates on the graph of a deterministic map $T^*$, called the OT map. The OT map has been applied to transfer learning and domain adaptation, among many others. Also, the OT map can be seen as a multivariate extension of the quantile function \cite{carlier2016vector,chernozhukov2017monge, ghosal2019multivariate,
hallin2021distribution} and has been recently applied to causal inference \cite{torous2021optimal}.

Motivated by these applications, the statistical analysis of the OT map has seen increased interest, mostly focusing on 
estimation error rates \cite{chernozhukov2017monge, hutter2021minimax, ghosal2019multivariate,deb2021rates, manole2021plugin, pooladian2021entropic, pooladian2022debiaser, divol2022optimal, pooladian2023minimax}; see a literature review below for a further discussion on these references. Still, there is much to be desired on statistical \textit{inference} for the OT map, such as valid testing and the construction of confidence sets, both of which hinge on a limit distribution theory. 
 Alas, to the best of our knowledge, there have been no limit distribution results for the OT map, except for the $d=1$ case where the OT map agrees with the composition of the quantile and distribution functions (cf. Chapter 2 in \cite{santambrogio15}).\footnote{\textcolor{black}{One exception is the recent preprint \cite{manole2023central}, which appeared on the arXiv after the initial submission of this manuscript; see the discussion at the end of Section \ref{sec: literature}.}}
Indeed, deriving distributional limits of OT maps poses a significant challenge from a probabilistic perspective. This is because the OT map is implicitly defined via the gradient of a dual potential function that is an optimal solution to a certain optimization problem, and hence its dependence on the data is highly complicated. 

The present paper tackles this challenge and derives several limit distribution results for the empirical OT map under the \textit{semidiscrete} setting. Semidiscrete OT refers to the case where the input distribution $R$ is absolutely continuous while the target $P$ is (finitely) discrete. Univariate quantiles for discrete outcomes have been investigated by  \cite{machado2005quantiles,chernozhukov2020generic}, and semidiscrete OT maps can be seen as a multivariate analog of discrete quantile functions. Discrete outcomes appear in many applications, including count data, ordinal data, discrete duration data, and data rounded to a finite number of discrete values. Additionally, semidiscrete OT has been applied to computer graphics \cite{de2012black,levy2015numerical}, fluid dynamics \cite{de2015power,gallouet2018lagrangian}, and resource allocation problems \cite{hartmann2020semi}.

For known reference measure $R$, which is natural when viewing the OT map as a vector quantile function, we derive limit distributions for the $L^p$-error $\| \hat{\vT}_n-\vT^* \|_{L^p(R)}^p$ with arbitrary $p \in [1,\infty)$ and for linear functionals of the form $\langle \bm{\varphi},\hat{\vT}_n \rangle_{L^2(R)} = \int \langle \bm{\varphi},\hat{\vT}_n \rangle \, dR$. Here $\hat{\vT}_n$ is the empirical OT map transporting $R$ onto the empirical distribution, and $\bm{\varphi}$ is a suitable Borel vector field. 
The limit distribution for the $L^p$-error is non-Gaussian, while that for the linear functional is a centered Gaussian. In addition to distributional convergence, we establish moment convergence of any polynomials of the $L^p$-error and linear functional, which in particular leads to an asymptotic expansion of the squared $L^2(R)$-risk of $\hat{\vT}_n$.  Furthermore, we derive an explicit form of Lebesgue density of the non-Gaussian limit law for the $L^p$-error functional. Finally, we establish  consistency of the nonparametric bootstrap for both cases. 

These statistical results enable us to perform various inference tasks for the OT map, such as the construction of $L^p$-confidence sets for $\vT^*$ and confidence intervals for $\langle \bm{\varphi},\vT^*\rangle_{L^2(R)}$.
One drawback of $L^p$-confidence sets is that they are difficult to visualize compared to $L^\infty$-confidence bands. To address this, we discuss a method to construct a confidence band derived from an $L^p$-confidence set that satisfies a certain relaxed coverage guarantee (cf. \cite[Section 5.8]{wasserman2006all}). Also, we develop a method to construct confidence intervals for a version of the maximum tail correlation \cite{beirlant2020center}, which serves as a risk measure for multivariate risky assets and is an instance of a linear functional of the OT map. Small-scale simulation experiments confirm that the bootstrap works reasonably well for $L^1$-confidence sets and inference for the maximum tail correlation. 

It is important to note that the semidiscrete OT map  is a discrete mapping that is piecewise constant over the partition of $\R^d$ defined by the \textit{Laguerre cells} (\cite{aurenhammer1987power,aurenhammer1998minkowski}; Chapter 5 in \cite{peyre2019computational}). 
\textcolor{black}{As such, the empirical OT map at a fixed interior point of each Laguerre cell exactly coincides with the population OT map as soon as the sample size is large enough, which effectively means that no nondegenerate \textit{pointwise} limit distribution exists for the empirical OT map.} Nevertheless, this observation does not contradict our limit theorems since both functionals aggregate the contributions of the empirical OT map near the boundaries of the Laguerre cells, which add up to nontrivial weak limits. \textcolor{black}{See the discussion at the end of Section \ref{sec: semidiscrete OT} for more details.}

The proof of our limit theorems relies on new stability estimates of the OT map with respect to the dual potential vector---this result may be of independent interest. Specifically, we show that the $L^p$-error functional is Hadamard \textit{directionally} differentiable with a nonlinear derivative as a mapping of the dual potential vector. For the linear functional, we establish (Fr\'{e}chet) differentiability and characterize the derivative. The derivation of these stability estimates, which relies on the careful analysis of the facial structures of the Laguerre cells, is the main technical contribution of the present paper. Given the stability results, the limit distributions follow by combining the extended delta method \cite{romisch2004} and a central limit theorem (CLT) for the empirical dual potential vector  \cite{del2022central}.
The bootstrap consistency readily follows for the linear functional, as it is Fr\'{e}chet differentiable with respect to the dual potential vector. For the $L^p$-error functional, while the derivative is nonlinear, the bootstrap is still shown to be consistent, thanks to the specific structure of the derivative; see the discussion after Theorem \ref{prop: bootstrap consistency}.

\textcolor{black}{
Having established limit theorems for the $L^p$-error and linear functionals of the empirical OT map, the question remains whether the empirical OT map has a weak limit in a function space. A natural function space to work in would be $L^2(R)$, but it turns out that the empirical OT map does not possess nontrivial weak limits in that space, under reasonable regularity conditions on the reference measure. Still, we establish that the empirical OT map satisfies a CLT in the dual space of the $\alpha$-H\"{o}lder space with any $\alpha \in (0,1]$. The idea to consider weak limits of function-valued estimators in normed spaces endowed with a weaker norm than $L^2$ is partially inspired by \cite{castillo2013nonparametric,castillo2014bernstein}, where this approach was used to derive nonparametric Bernstein-von Mises theorems. Additionally, we show that the empirical OT map is asymptotically efficient in the H\'{a}jek-Le Cam sense (cf. Chapter 3.11 in \cite{van1996weak}) under the said dual H\"{o}lder space setting. To the best of our knowledge, this is the first paper to derive asymptotic efficiency results for OT map estimation in an infinite-dimensional setting.
}

\subsection{Literature review}
\label{sec: literature}

The literature on statistical OT has rapidly expanded in recent years, so we confine ourselves to discussing only references directly related to OT map estimation and semidiscrete OT.
The literature on OT map estimation has mostly focused on continuous targets. \cite{chernozhukov2017monge} propose viewing the OT map as a multivariate extension of the quantile function and establish local uniform consistency of the empirical OT map; see also \cite{carlier2016vector,ghosal2019multivariate,hallin2021distribution}. \cite{hutter2021minimax} derive minimax rates for estimating the OT map and analyze wavelet-based estimators. For continuous targets, the results of \cite{hutter2021minimax} suggest that the minimax rate (for the $L^2$-loss) would be $n^{-1/d}$ when no further smoothness assumptions are imposed on the dual potentials (although the $n^{-1/d}$-minimax rate is formally a conjecture). Thus, estimation of the OT map suffers from the `curse of dimensionality', similarly to the OT cost itself \cite{dudley1969speed,fournier2015rate,weed2019sharp,niles2019estimation}. 
See also \cite{deb2021rates,manole2021plugin,pooladian2021entropic,divol2022optimal} for other contributions on estimation of the OT map. None of the above references contain limit distribution results for the OT map.

An object related to the OT map is an entropic OT (EOT) map \cite{pooladian2021entropic}, which is defined by the barycentric projection of the optimal coupling for the regularized OT problem with entropic penalty \cite{cuturi2013lightspeed,peyre2019computational}. The EOT map approximates the standard OT map as the regularization parameter approaches zero \cite{mikami2004monge,carlier2022convergence}. For a \textit{fixed} regularization parameter, \cite{gonzalez2022weak,rigollet2022sample,gonzalez2022weak,goldfeld2022EOT} derive the parametric convergence rate for the empirical EOT map. Furthermore,  \cite{goldfeld2022EOT}  establish a limiting Gaussian distribution, bootstrap consistency, and asymptotic efficiency for the empirical EOT map; see also \cite{gonzalez2022weak}.  However, the results of \cite{goldfeld2022EOT} and \cite{gonzalez2022weak} do not extend to vanishing regularization parameters and hence do not cover standard OT map estimation.  

For semidiscrete OT, \cite{kitagawa2019convergence} and \cite{bansil2022quantitative} derive several important structural results, including regularity of the dual objective function. \cite{altschuler2022asymptotics} derive an asymptotic expansion of the EOT cost in the semidiscrete case when the regularization parameter tends to zero, showing faster convergence than the continuous-to-continuous case. In the statistics literature, there are two recent papers related to ours.  The first is \cite{del2022central}, which derives limit distributions for the OT cost and dual potential vector in the semidiscrete setting. The derivation of our limit theorems builds on their work, but as noted before, the bulk of our effort is devoted to establishing (directional) derivatives of the $L^p$-error and linear functionals of the OT map with respect to the dual potential vector; these are not covered by \cite{del2022central} and require substantial work.\footnote{The most recent update of \cite{del2022central} available at \texttt{https://hal.science/hal-03232450v3} added a remark (Remark 4.9) showing that $\sqrt{n}\| \hat{\vT}_n - \vT^* \|_{L^2(R)}^2$ is stochastically bounded, but leaves the problem of finding weak limits of $\hat{\vT}_n-\vT^*$ open.  } Also, in contrast to \cite{del2022central}, we allow the support of the reference measure $R$ to be unbounded, which requires additional work to derive a CLT for the dual potential vector in our setting.
The second related work is \cite{pooladian2023minimax}, which shows that empirical EOT maps with vanishing regularization parameters achieve the parametric convergence rate toward the standard OT map (under the squared $L^2$-loss) in the semidiscrete setting, showing that OT map estimation in the semidiscrete case is free of the curse of dimensionality. No limit distribution results are derived in that paper.

\textcolor{black}{
Finally, we comment on the recent preprint \cite{manole2023central}, which was posted on the arXiv after the initial submission of this manuscript. In \cite{manole2023central}, the authors establish pointwise CLTs for kernel estimators of smooth OT maps for absolutely continuous distributions supported on the flat torus. Their scope and proof techniques are substantially different.
}

\subsection{Organization}
The rest of this paper is organized as follows. 
\textcolor{black}{In Section \ref{sec: preliminaries}, we present background material of semidiscrete OT, Hadamard differentiability, and the extended delta method.}
Section \ref{sec: main} collects the stability results and limit theorems for the two functionals of interest. 
 Section \ref{sec: applications} presents applications and simulation results. \textcolor{black}{In Section \ref{sec: dual holder}, we show that the empirical OT map does not possess nontrivial weak limits in $L^2(R)$, but nonetheless satisfies a CLT in a dual H\"{o}lder space; that section also shows that the empirical OT map is asymptotically efficient in the said Banach space setting.} All the proofs are gathered in Section \ref{sec: proofs}. Section \ref{sec: conclusion} leaves some concluding remarks.

\subsection{Notation}
For $a,b \in \R$, we use the notation $a \vee b = \max \{a,b \}$ and $a \wedge b = \min \{ a,b \}$.
We use $\| \cdot \|$ and $\langle \cdot, \cdot \rangle$ to denote the Euclidean norm and inner product, respectively. Vectors \textcolor{black}{and matrices} are written in boldface letters. Let $\vzero$ and $\vone$ denote the vectors of all zeros and ones, respectively; their dimensions should be understood from the context. For $r > 0$, let $B_r$ denote the closed ball with center $\vzero$ and radius $r$. 
For a subset $A$ of a Euclidean space, the boundary and interior are denoted by $\partial A$ and $\inte (A)$, respectively. Also, define $\dist (\vy,A) \coloneqq \inf \{ \| \vy-\vy' \| : \vy' \in A \}$.
\textcolor{black}{For $d \in \NN$ and $0 \le r \le d$, $\cH^{r}$ denotes the $r$-dimensional Hausdorff measure on $\R^d$,
\[
\cH^{r}(A) = \lim_{\delta \to 0} \inf \Big \{ \frac{\pi^{r/2}}{\Gamma(\frac{r}{2}+1)} \sum_{j=1}^\infty \Big ( \frac{\diam C_j}{2}\Big)^r :  A \subset \bigcup_{j=1}^\infty C_j, \ \diam C_j \le \delta \Big\}, \quad A \subset \R^d,
\]
where $\diam C_j$ is the diameter of $C_j$ and $\Gamma (\cdot)$ is the gamma function.}
See \cite{evans1991measure} for a textbook treatment of Hausdorff measures.

\section{Preliminaries}
\label{sec: preliminaries}

\subsection{Semidiscrete optimal transport}
\label{sec: semidiscrete OT}
We consider a semidiscrete OT problem under quadratic cost. Let $R$ be a Borel probability measure on $\R^d$ with finite second moment and $P$ be a finitely discrete distribution on $\R^d$ with support $\cX = \{ \vx_1,\dots,\vx_N \}$, where $\vx_1,\dots,\vx_N$ are all distinct ($N \ge 2$). Consider the Kantorovich problem (\ref{eq: Kantorovich}). 
 Assuming that $R$ is absolutely continuous with respect to the Lebesgue measure, Brenier's theorem \cite{brenier1991polar} yields that the Kantorovich problem (\ref{eq: Kantorovich}) admits a unique optimal solution (coupling) $\pi^*$. Furthermore, the coupling $\pi^*$ is induced by an $R$-a.s. unique map $\vT^*: \R^d \to \cX$, i.e.,  $\pi^*$ agrees with the joint law of $\big(\vY,\vT^*(\vY)\big)$ for $\vY \sim R$. We refer to $\vT^*$ as the \textit{OT map} transporting $R$ onto $P$.

\textcolor{black}{The duality theory for the OT problem (cf. \cite{ambrosio2005, villani2008optimal,santambrogio15}) plays a central role in the derivation of the limit theorems for the empirical OT map. Regarding $P$ as a probability measure on $\calX$, the (semi)dual problem for (\ref{eq: Kantorovich}) reads as 
\begin{equation}
\label{eq: Kantorovich dual}
 \max_{\psi \in L^1(P)}  \int_{\calX} \psi \, dP +\int_{\R^d} \psi^c \, dR,
\end{equation}
and the maximum is attained; here, $\psi^c(\vy) = \min_{\vx \in \calX}(\|\vy-\vx\|^2/2-\psi(\vx))$ is the \textit{$c$-transform} of $\psi$ for the cost $c(\vx,\vy) = \| \vx-\vy\|^2/2$ (cf. Theorem 5.9 in \cite{villani2008optimal}; Theorem 6.1.5 in \cite{ambrosio2005}). 
Setting $\vz = (\psi(\vx_1), \dots, \psi(\vx_N))^\intercal \in \R^N$, the dual problem (\ref{eq: Kantorovich dual}) reduces to
\begin{equation}
\label{eq: semi_dual}
\max_{\vz\in\R^N} \, \langle \vz,\vp \rangle + \int \min_{1 \leq i\leq N} \left ( \frac{1}{2} \| \vy-\vx_i \|^2 - z_i\right )\,dR(\vy),
\end{equation}
where $\vp = (p_1,\dots,p_N)^{\intercal} \coloneqq (P(\{ \vx_1 \}),\dots,P(\{\vx_N\}))^{\intercal}$ is the simplex (frequency) vector corresponding to $P$. 
We shall call any optimal solution $\vz^*$ to the dual problem (\ref{eq: semi_dual}) a \textit{dual potential vector}. Now, for $\psi^c(\vy) = \min_{1 \le i \le N}(\|\vy-\vx_i\|^2/2-z_i^*)$, the OT map $\vT^*$ transporting $R$ onto $P$ agrees with $\vy - \nabla_{\vy}\psi^c(\vy)$ (cf. Theorem 1.22 in \cite{santambrogio15} and its proof; note that $\psi^c$ is an optimal potential from $R$ to $P$), which simplifies to 
\[
\vT^*(\vy) 
=\argmin_{\vx_i: 1 \le i \le N} \left ( \frac{1}{2} \|\vy-\vx_i\|^2 - z_i^* \right )
\]
for $R$-a.e. $\vy$. For $R$-a.e. $\vy$, the argmin on the right-hand side is a singleton.}

For $\vz \in \R^N$, define the \textit{Laguerre cells} $\{ C_i(\vz) \}_{i=1}^N$ as 
\[
\begin{split}
  C_i(\vz) &\coloneqq \bigcap_{\substack{j \ne i \\ 1 \le j \le N}} \left \{\vy \in \R^d : \frac{1}{2}\|\vx_i - \vy\|^2 
 - z_i \leq \frac{1}{2} \|\vx_j - \vy\|^2 - z_j\right \} \\
 &= \bigcap_{\substack{j \ne i \\ 1 \le j \le N}}\big\{\vy \in \R^d: \langle \vx_i-\vx_j,\vy \rangle \geq b_{ij} (\vz) \big \},
\end{split}
\]
with $b_{ij}(\vz) \coloneqq  b_{ij}(z_i,z_j) \coloneqq (\|\vx_i\|^2 - \|\vx_j\|^2)/2 -z_i + z_j$. \textcolor{black}{By definition, Laguerre cells agree with Voronoi cells when $\vz$ is constant, i.e., $z_1=\cdots=z_N$.} Each Laguerre cell $C_i(\vy)$ is a polyhedral set defined by the intersection of $N-1$ half-spaces. Then, we see that
\begin{equation}
\vT^*(\vy) = \vx_i \ \  \text{for} \ \  \vy \in \inte \big(C_i(\vz^*)\big) \ \ \text{and} \ \ i \in \{1,\dots,N \}.
\label{eq: OT map}
\end{equation}
Since the Laguerre cells form a partition of $\R^d$ up to Lebesgue negligible sets, the expression (\ref{eq: OT map}) defines an $R$-a.e. defined map with values in $\cX$. Furthermore, since $\vT^*$ is a transport map, i.e., $\vT^*(\vY) \sim P$ for $\vY \sim R$, we have 
\[
\begin{split}
R\big (C_i(\vz^*)\big) =R\big ( \inte (C_i(\vz^*)) \big ) = \Prob \big(\vT^*(\vY) = \vx_i \big) = P(\{ \vx_i \}) = p_i > 0
\end{split}
\] 
for every $i \in \{ 1,\dots, N \}$. \textcolor{black}{We refer the reader to \cite{aurenhammer1987power,aurenhammer1998minkowski} and Chapter 5 in \cite{peyre2019computational} for more details about Laguerre cells.}

In the present paper, we assume that $R$ is a \textit{known} reference measure and are interested in making statistical inference on $\vT^*$ for unknown $P$. Given an i.i.d. sample $\vX_1,\dots,\vX_n$ from $P$, a natural estimator for $\vT^*$ is the empirical OT map $\hat{\vT}_n$ transporting $R$ onto the empirical distribution $\hat{P}_n = n^{-1}\sum_{i=1}^n \delta_{\vX_i}$.  
Set 
\[
\hat{\vp}_n = (\hat{p}_{n,1},\dots,\hat{p}_{n,N})^{\intercal} = \big(\hat{P}_n(\{\vx_1\}),\dots,\hat{P}_n(\{\vx_N\})\big)^\intercal
\]
as the empirical frequency vector.
Then, the empirical OT map $\hat{\vT}_n$ admits the expression
\[
\hat{\vT}_n (\vy) = \argmin_{\vx_i: 1 \le i \le N} \left ( \frac{1}{2} \|\vy-\vx_i\|^2 - \hat{z}_{n,i} \right ),
\]
where $\hat{\vz}_n =(\hat{z}_{n,1},\dots,\hat{z}_{n,N})^{\intercal}$ is an optimal solution to the dual problem (\ref{eq: semi_dual}) with $\vp$ replaced by $\hat{\vp}_n$.

We shall study limit theorems for the empirical OT map. The problem has a certain subtlety which we shall discuss here. 
In our semidiscrete setup, OT maps are piecewise constant functions with values in the discrete set $\cX$. Hence, it is not hard to see that, for every $\vy \in \inte(C_i(\vz^*))$, the empirical OT map $\hat{\vT}_n(\vy)$ \textit{exactly} coincides with $\vT^*(\vy)$ as $n \to \infty$. This effectively means that finding pointwise limit distributions is a vacuous endeavour; see Proposition \ref{prop: pointwise} ahead for the precise statement. However, this `super consistency' result has limited statistical values since (i) the population Laguerre cells are unknown;  (ii) there is no guarantee that a chosen reference point $\vy$ lies in the interior of one of the population Laguerre cells; (iii) the sample size needed to guarantee $\hat{\vT}_n(\vy) = \vT^*(\vy)$ relies on how close $\vy$ is to the boundary of $C_i(\vz^*)$; and (iv)  the super consistency result cannot capture the behavior of the OT map near the boundaries of the Laguerre cells. 

Because of these reasons, we shall analyze functionals of the empirical OT map other than pointwise ones. 
The preceding observation does not preclude the possibility of finding nontrivial weak limits for certain functionals of the empirical OT map, because the contributions from the behaviors of the empirical OT map near the boundaries of the (population) Laguerre cells may pile up and lead to nondegenerate limits.  
Specifically, we shall focus on the following functionals\footnote{We use $L^s$ instead of $L^p$ as $p$ might be confused with the probability simplex vector or its elements.}:
\begin{itemize}
\item $L^s$-error with arbitrary $s \in [1,\infty)$: 
\[
\| \hat{\vT}_n - \vT^* \|_{L^s(R)}^s= \int \| \hat{\vT}_n-\vT^* \|^s dR; 
\]
\item  Linear functional: 
\[\langle \bm{\varphi}, \hat{\vT}_n \rangle_{L^2(R)} = \int \langle \bm{\varphi}, \hat{\vT}_n \rangle dR\] for a suitable Borel vector field $\bm{\varphi}:\R^d \to \R^d$.
\end{itemize}
We will establish (nondegenerate) weak limits for those functionals (for the linear functional case, nondegeneracy of the weak limit relies on the choice of $\bm{\varphi}$). We also establish consistency of the nonparametric bootstrap for both functionals. These results enable performing various inference tasks for the OT map. See Section \ref{sec: applications} ahead for more details.

\begin{remark}[Known $R$ assumption]
Our assumption of a known reference measure $R$ is natural when we view the OT map as a multivariate extension of the quantile function \cite{chernozhukov2017monge}. Indeed, when $d=1$ and $R= \text{Unif}[0,1]$, the OT map $T^*$ agrees with the quantile function of $P$. In general, the OT map shares two important properties of the quantile function. (i) For $\vY \sim R$, $\vT^*(\vY)$ recovers the target distribution $P$, $\vT^*(\vY) \sim P$; and (ii) $\vT^*$ is a \textit{monotone}, i.e., $\langle \vT^*(\vy) - \vT^*(\vy'), \vy-\vy' \rangle \ge 0$, which follows because $\vT^*$ is cyclically monotone, i.e., it agrees with the gradient of a convex function. Common choices of the reference measure $R$ include the uniform distribution over the unit cube $[0,1]^d$ and the unit ball $B_1$. Another possible reference measure would be the standard Gaussian distribution.
\end{remark}

\begin{remark}[Computational aspects]
The decomposition into Laguerre cells is known as a `power diagram' in computational geometry \cite{aurenhammer1987power}, for which efficient algorithms are available \cite{bowyer1981computing,watson1981computing}. They can be implemented in several programming libraries, such as CGAL \cite{cgal1996cgal} and GEOGRAM \cite{levy2015geogram}. Efficient algorithms for computing dual potential vectors were proposed by \cite{merigot2011multiscale,kitagawa2019convergence}. Among others, \cite{kitagawa2019convergence} established linear convergence of a damped Newton algorithm for solving the dual problem \eqref{eq: semi_dual}. We refer to \cite{levy2018notions} and \cite[Chapter 5]{peyre2019computational} for a review of computational aspects of semidiscrete OT. 
\end{remark}

\subsection{Hadamard differentiability and extended delta method}
\label{sec: functional delta}
\textcolor{black}{
We briefly review Hadamard differentiability and the extended delta method. The reader is referred to \cite{van1996weak,vanderVaart1998asymptotic, romisch2004} for more details.
Let $\mathfrak{D}, \mathfrak{E}$ be normed spaces and consider a map $\phi : \Theta \to \mathfrak{E}$, where $\Theta$ is a nonepty subset of $\mathfrak{D}$. 
We say that $\phi$ is \textit{Hadamard directionally differentiable}  at $\theta \in \Theta$  if there exists a continuous map $\phi'_{\theta}: \mathfrak{D} \to \mathfrak{E}$ such that 
\begin{equation}
\lim_{n \to \infty} \frac{\phi(\theta_{n}) - \phi(\theta)}{t_n} = \phi_{\theta}'(h)
\label{eq: H derivative}
\end{equation}
for every sequence of positive reals $t_n \downarrow 0$ and every sequence $\theta_n \in \Theta$ with $t_n^{-1}(\theta_{n} -\theta) \to h$ as $n \to \infty$.
The derivative $\phi_{\theta}'$ need not be linear.  If \eqref{eq: H derivative} only holds for $h \in \mathfrak{D}_0$ for a subset $\mathfrak{D}_0 \subset \mathfrak{D}$, then we say that $\phi$ is 
Hadamard directionally differentiable  at $\theta \in \Theta$ \textit{tangentially} to $\mathfrak{D}_0$. In that case, the derivative $\phi_{\theta}'$ is defined only on $\mathfrak{D}_0$.
If the derivative $\phi_{\theta}'$ is linear, then we say that $\phi$ is \textit{Hadamard differentiable}  at $\theta$. When $\mathfrak{D}$ is finite-dimensional and $\Theta$ is open, Hadamard differentiability is equivalent to Fr\'{e}chet differentiability; cf. Example 3.9.2 in \cite{van1996weak}. Recall that $\phi$ is Fr\'{e}chet differentiable at $\theta$ if there exists a continuous linear map $\phi'_{\theta}: \mathfrak{D} \to \mathfrak{E}$ such that $\phi (\theta+h) - \phi(\theta) = \phi'_{\theta}(h) + o(\|h\|)$ as $\| h \| \to 0$. }

\textcolor{black}{
The extended delta method enables deriving limit theorems for Hadamard directionally differentiable functionals of convergent (in distribution) sequences of random elements. 
}

\begin{lemma}[Extended delta method; \cite{romisch2004}]
\label{lem: functional delta method}
\textcolor{black}{Consider the above setting. Let $T_n: \Omega \to \Theta$ be maps such that $r_n (T_n - \theta) \stackrel{d}{\to} T$ for some norning sequence $r_n \to \infty$ and Borel measurable map $T: \Omega \to \mathfrak{D}$ with values in a separable subset of $\mathfrak{D}$. Then, we have  $r_n \big(\phi(T_n) - \phi(\theta)\big) - \phi_{\theta}'(r_n(T_n-\theta)) \to 0$ in outer probability. In particular, $r_n \big(\phi(T_n) - \phi(\theta)\big) \stackrel{d}{\to} \phi_{\theta}'(T)$.}
\end{lemma}

\section{Stability and limit theorems for two functionals}
\label{sec: main}

Our approach to finding limit distributions for the preceding functionals relies on establishing (directional) differentiability with respect to the dual potential vector. Indeed, the bulk of our effort is devoted to proving those stability estimates. 
The desired limit distributions follow by combining a limit distribution result for the empirical dual potential vector (cf. \cite{del2022central}) and the extended delta method. After discussing regularity conditions on the reference measure $R$ and the Borel vector field $\bm{\varphi}$, we present key stability results, which would be of independent interest. Then, we move on to discuss the limit theorems.

\subsection{Assumptions}

Throughout, we maintain the following assumption on the reference measure $R$. 
\begin{assumption}[Regularity of $R$]
\label{asp: cont}
The reference measure $R$ has finite second moment and is absolutely continuous with Lebesgue density $\rho$ continuous $\cH^{d-1}$-a.e. on a closed set $\cY$ containing the support of $R$.
Furthermore, 
for every affine subspace $H$ of $\R^d$ with dimension $r \in \{ d-1,d-2 \}$ and for some sufficiently small $t_0 > 0$, we have
\begin{equation}
\int_{H} \sup_{\substack{\vv \in (H-\vy_0)^{\bot} \\ \| \vv \| \le t_0}} \rho (\vy + \vv) \, d\cH^{r}(\vy) < \infty,
\label{eq: DCT}
  \end{equation}
where $\vy_0$ is any fixed point in $H$ and $(H-\vy_0)^{\bot} = \{ \vv : \langle \vv, \vy-\vy_0 \rangle = 0, \forall \vy \in H \}$.  
Finally, the set $\cY$ satisfies either of the following: (i) $\cY$ is a convex polyhedral set ($\cY=\R^d$ is allowed), or (ii) $\cY$ has Lipschitz boundary with $\cH^{d-1}(\partial \cY \cap H) = 0$ for every hyperplane $H$ in $\R^d$.
\end{assumption}

The density $\rho$ need not be globally continuous on $\R^d$. Without loss of generality, we set $\rho = 0$ on $\cY^c$. Condition (\ref{eq: DCT}) holds if $\cY$ is compact and $\rho$ is bounded, in which case the left-hand side on (\ref{eq: DCT}) can be bounded by $\| \rho \|_{\infty} \cH^{r}(H \cap \cY^{t_0}) < \infty$, where $\cY^{t_0}$ is the $t_0$-blow-up of $\cY$, i.e., $\cY^{t_0} = \{ \vy : \dist (\vy,\cY) \le t_0 \}$. In particular, Assumption \ref{asp: cont} allows for the uniform distributions over the unit cube and ball.  Also, (nondegenerate) Gaussian distributions on $\R^d$ clearly satisfy Assumption \ref{asp: cont}.

For the limit theorems, we need an additional assumption on  the regularity of $R$, stated as follows.

\begin{assumption}[$L^1$-Poincar\'{e} inequality]
\label{asp: poincare}
The reference measure $R$ satisfies an $L^1$-Poincar\'{e} inequality, i.e., there exists a finite constant $\mathsf{C}_{\mathrm{P}}$ such that for $\vY \sim R$,
\[
\E\big [ |f(\vY) - \E[f(\vY)]| \big] \le \mathsf{C}_{\mathrm{P}} \E\big[\| \nabla f (\vY) \|\big],
\]
for every locally Lipschitz function $f$ on $\R^d$, where $\| \nabla f (\vy) \| \coloneqq \limsup_{\vx \to \vy} \frac{|f(\vx) - f(\vy)|}{\|\vx - \vy\|}$. 
\end{assumption}

Assumption \ref{asp: poincare} is not needed for the stability results in the next section, but needed to establish a limit distribution for $\hat{\vz}_n$, whose derivation relies on (minor extensions of) the results of \cite{kitagawa2019convergence,bansil2022quantitative}. In \cite{kitagawa2019convergence,bansil2022quantitative}, the $L^1$-Poincar\'{e} inequality is used to guarantee strict concavity of the dual objective function in nontrivial directions, which ensures the uniqueness of the dual potential vector subject to proper normalization. 
The $L^1$-Poincar\'{e} inequality is  equivalent to Cheeger's isoperimetric inequality,
\begin{equation}
\liminf_{\delta \downarrow 0} \frac{R\big( A^\delta \setminus A \big)}{\delta} \ge \mathsf{C}_{\mathrm{P}}^{-1} \min \{ R(A), 1-R(A) \}
\label{eq: cheeger}
\end{equation}
for every Borel set $A \subset \R^d$, where $A^\delta = \{ \vy : d(\vy,A) \le  \delta \}$. See \cite[Lemma 2.2]{milman2007role} and \cite{bobkov1997iso}. For regular $\rho$ and $A$, the left-hand side on (\ref{eq: cheeger}), called the Minkowski content, agrees with (our definition of) the surface measure $R^+(A)$. Our reference density $\rho$ may have discontinuities, but our proof only requires Cheeger's inequality to hold for Laguerre cells $C_i (\vz)$ with positive $R$-measure, for which the Minkowski content agrees with $R^+(C_i(\vz))$ under Assumption \ref{asp: cont} from our proof of Theorem \ref{thm: Hadamard_diff} below. 

The $L^1$-Poincar\'{e} inequality is known to hold for  every \textit{log-concave distribution}, i.e., a distribution $Q$ of the form $dQ= e^{-\psi} \,d\vy$ for some convex function $\psi: \R^d \to (-\infty,\infty]$; see \cite{kannan1995isoperimetric,bobkov1999isoperimetric}.
The uniform distributions over the unit cube and ball and nondegenerate Gaussian distributions all satisfy Assumption \ref{asp: poincare} as they are log-concave.

When we consider inference for linear functionals of the form $\langle \bm{\varphi}, \vT^* \rangle_{L^2(R)}$, we make the following assumption on the Borel vector field $\bm{\varphi}$. Denote by $\mathfrak{D}_{\bm{\varphi}}$ the set of discontinuities of $\bm{\varphi}$ on $\cY$. 

\begin{assumption}[Regularity of $\bm{\varphi}$]
\label{asp: text function}
\textcolor{black}{The Borel vector field $\bm{\varphi}: \R^d \to \R^d$ is $R$-integrable (i.e., each coordinate of $\bm{\varphi}$ is $R$-integrable) and satisfies that $\cH^{d-1}(\mathfrak{D}_{\bm{\varphi}} \cap H) = 0$ for every hyperplane $H$ in $\R^d$. Furthermore, Condition (\ref{eq: DCT}) with $\rho$ replaced by $\| \bm{\varphi} \| \cdot \rho$ holds for every affine subspace $H$ of $\R^d$ with dimension $r \in \{ d-1,d-2\}$ and for some sufficiently small $t_0 > 0$.}
\end{assumption}

The assumption essentially guarantees that the discontinuities of $\bm{\varphi}$ have $\calH^{d-1}$-measure zero around the boundaries of the Laguerre cells, which is needed for its Fr\'{e}chet differentiability. Assumption \ref{asp: text function} will be discussed further after Theorem \ref{thm: Hadamard_diff} below. We allow the test function $\bm{\varphi}$ to have discontinuities to cover maximum tail correlations, where $\bm{\varphi} (\vy)= \alpha^{-1} \vy \mathbbm{1}_{B_r^c}(\vy)$ for some $\alpha \in (0,1)$ and $r > 0$; see Section \ref{sec: applications} ahead. 
\begin{remark}[Unbounded reference densities]
Assumptions \ref{asp: cont} and \ref{asp: poincare} allow for unbounded reference densities. For example, assume $d > 2$ and consider 
\[
\rho (\vy) = \frac{c_\alpha}{\|\vy\|^{\alpha}} \mathbbm{1}_{B_1 \setminus \{ \vzero \}} (\vy), \ 0 < \alpha < d-2, 
\]
where $c_{\alpha}$ is the normalizing constant. Note that $\vY \sim \rho$ corresponds to independently generating $\frac{\vY}{\| \vY \|}$ from the uniform distribution over the unit sphere and $\| \vY \|$ from the $Beta~(d-\alpha,1)$ distribution. In this case, we may take $\calY = B_1$, for which Assumption \ref{asp: cont} (ii) is verified, and $\rho$ is continuous on $\calY$ except at $\vy = \vzero$. Condition (\ref{eq: DCT}) trivially holds if the affine space $H$ does not contain the origin since in that case $\rho$ is bounded on $H$. Suppose that $H$ contains the origin and take $\vy_0 = \vzero$. For every $\vv \in H^{\bot}$, $\| \vy + \vv \|^{-\alpha} = (\| \vy \|^2+\|\vv\|^2)^{-\alpha/2} \le \| \vy \|^{-\alpha}$, 
so that for every $t_0 > 0$, 
\[
\sup_{\vv \in H^{\bot}, \| \vv \| \le t_0} \rho (\vy + \vv) \le \frac{c_\alpha}{\| \vy \|^{\alpha}} \mathbbm{1}_{B_{1+t_0}} (\vy).
\]
Since $\int_{H \cap B_{1+t_0}} \| \vy \|^{-\alpha} \, \calH^{r}(d\vy) < \infty$ whenever $\alpha < r$, Condition (\ref{eq: DCT}) is verified. The $L^1$-Poincar\'{e} inequality (Assumption \ref{asp: poincare}) follows from a small adaptation to the proof of Proposition A.1 in \cite{kitagawa2019convergence}, upon observing that the density $\bar{\rho} (r) \propto r^{d-1-\alpha}$ on $[0,1]$ is log-concave and hence satisfies the $L^1$-Poincar\'{e} inequality. However, Assumption \ref{asp: cont} excludes the spherical uniform distribution that corresponds to $\alpha = d-1$. Indeed, Theorem \ref{thm: Hadamard_diff} below does not hold in general for the spherical uniform distribution; see Remark \ref{rem: spherical uniform} for further discussion. 
\end{remark}

\subsection{Stability results}
For $\vz \in \R^N$, define a map $\vT_{\vz}$ with values in $\cX$ by
\[
\vT_{\vz}(\vy) = \argmin_{\vx_i: 1 \le i \le N} \left ( \frac{1}{2} \|\vy-\vx_i\|^2 - z_{i} \right ),
\]
which is well-defined $R$-a.e. Note that, since $\bm{T}_{\vz}$ agrees with the gradient of the convex function $\max_{1 \le i \le N} \big \{ \langle \cdot,\vx_i \rangle - \big (\frac{\|\vx_i\|^2}{2} - z_i \big) \big \}$ up to Lebesgue negligible sets, by the Knott-Smith theorem \cite{knott1984optimal}, $\bm{T}_{\vz}$ is the OT map transporting $R$ onto $\sum_{i=1}^N R_i\big (C_i(\vz)\big) \delta_{\vx_i}$. 
In this section, we establish (directional) differentiability of the following functions:
\[
\begin{split}
\delta_s(\vz_1,\vz_2) &\coloneqq \|\vT_{\vz_1} - \vT_{\vz_2}\|_{L^s(R)}^s,\quad \vz_1,\vz_2 \in \R^N,\\
\gamma_{\bm{\varphi}} (\vz) &\coloneqq \langle \bm{\varphi}, \vT_{\vz} \rangle_{L^2(R)} = \int \langle \bm{\varphi},\vT_{\vz} \rangle \, dR, \quad \vz \in \R^N.
\end{split}
\]
Observe that $\| \hat{\vT}_n - \vT^* \|_{L^s(R)}^s = \delta_s(\hat{\vz}_n,\vz^*)$ and $\langle \bm{\varphi},\hat{\vT}_n - \vT^* \rangle_{L^2(R)}= \gamma_{\bm{\varphi}}(\hat{\vz}_n) - \gamma_{\bm{\varphi}}(\vz^*)$.
Combined with a limit distribution result for $\hat{\vz}_n$ (which will be discussed in the next section), the limit distributions for these functionals follow via the extended delta method.
It turns out that the $\delta_s$ functional is not (Fr\'{e}chet) differentiable at $(\vz^*,\vz^*)$, but \textit{Hadamard directionally differentiable}, which is enough to invoke the extended delta method.
Note that to find a limit distribution for $\| \hat{\vT}_n-\vT^*\|_{L^s(R)}^s$, we only need to derive a Hadamard directional derivative of a simpler function $\vz \mapsto \| \vT_{\vz} - \vT_{\vz^*} \|_{L^s(R)}^s$ at $\vz^*$.  However, to study the bootstrap  for the $L^s$-functional, we need to analyze the two-variable mapping $(\vz_1,\vz_2) \mapsto \| \vT_{\vz_1} - \vT_{\vz_2} \|_{L^s(R)}^s$.

\begin{figure}
    \centering \includegraphics[width=0.54\textwidth]{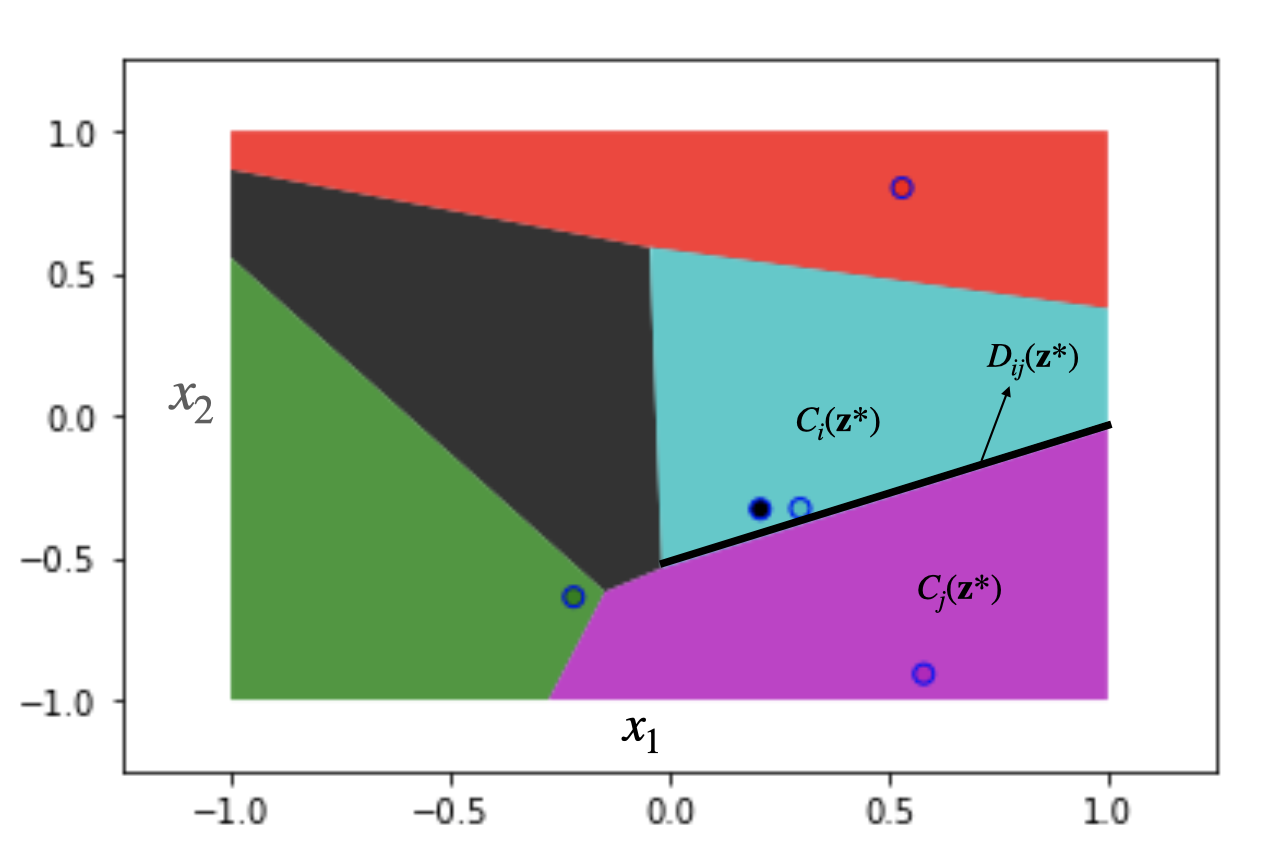}
    \caption{\small Laguerre cells $C_i(\vz^*)$ and $C_j(\vz^*)$ and their intersection $D_{ij}(\vz^*)$ in $d = 2$. Here, $R = \mathrm{Unif} ([-1,1]^2)$ and $P = \mathrm{Unif}(\{\vx_1,\dots,\vx_5\})$ for 5 points $\vx_1,\dots,\vx_5$ chosen randomly on $[-1,1]^2$.}
    \label{fig:laguerre_1}
\end{figure}

To state our stability results, we need additional notations. 
For a subset $D$ of a hyperplane in $\R^d$, the $R$-surface measure of $D$ is defined by
\[
R^{+}(D) = \int_D \rho \,d\cH^{d-1}.
\]
Recall the Laguerre cells $\{ C_i(\vz) \}_{i=1}^N$ defined in the previous section and set $b_{ij}(\vz) = (\|\vx_i\|^2-\|\vx_j\|^2)/2-z_i+z_j$. When evaluated at $\vz = \vz^*$  (an optimal solution to (\ref{eq: semi_dual}) for $(R,P)$), we often omit the dependence on $\vz^*$, i.e., we write $b_{ij} = b_{ij}(\vz^*)$ and $C_i = C_i(\vz^*)$ (here $\vz^*$ is any optimal solution to the dual problem \eqref{eq: semi_dual}).
Denote the boundary between $C_i$ and $C_j$ by
\[
D_{ij} \coloneqq C_i \cap C_j = C_i  \cap \big\{\vy: \langle \vx_i-\vx_j,\vy \rangle = b_{ij}\big\} = C_j  \cap \big\{\vy: \langle \vx_i-\vx_j,\vy \rangle = b_{ij}\big\}.
\]
Observe that $b_{ij}$ is anti-symmetric in $(i,j)$, $b_{ij}  = -b_{ji}$, and that it satisfies $b_{ij} + b_{jk} = b_{ik}$, while $D_{ij}$ is symmetric in $(i,j)$, $D_{ij} = D_{ji}$. The collection $\{ D_{ij} \}_{j \ne i}$ forms the boundary of $C_i$. Figure \ref{fig:laguerre_1} displays Laguerre cells and their boundaries in $d=2$.

We are now ready to state the main result of this section. 
\begin{theorem}[Stability]
\label{thm: Hadamard_diff}
Under  Assumption \ref{asp: cont}, the following hold. 
\begin{enumerate}
    \item[(i)] For arbitrary $s \in [1,\infty)$, the map $\R^{2N} \ni (\vz_1,\vz_2) \mapsto \delta_s(\vz_1, \vz_2) = \|\vT_{\vz_1} - \vT_{\vz_2}\|_{L^s(R)}^s$ is Hadamard directionally differentiable at $(\vz^*, \vz^*)$  with derivative 
\begin{equation}
\label{eq: h_deriv}
[ \delta_s ]'_{(\vz^*, \vz^*)}(\vh_1, \vh_2) = \sum_{1 \leq i < j \leq N} \|\vx_i - \vx_j\|^{s-1} \, R^{+}(D_{ij})\, |h_{2,j} - h_{2,i} - h_{1,j} + h_{1,i}| 
\end{equation}
for $\vh_{k} = (h_{k,1},\dots,h_{k,N})^{\intercal} \in \R^N$ and $k=1,2$. 
\item[(ii)] Suppose in addition that Assumption \ref{asp: text function} holds. Then, the map $\R^N \ni \vz \mapsto \gamma_{\bm{\varphi}}(\vz) = \langle \bm{\varphi},\vT_{\vz} \rangle_{L^2(R)}$ is (Fr\'{e}chet) differentiable at $\vz^*$ with derivative
\begin{equation}
[\gamma_{\bm{\varphi}}]_{\vz^*}'(\vh) = \sum_{1\leq i < j \leq N} \frac{h_i - h_j}{\|\vx_i - \vx_j\|}\int_{D_{ij}}  \langle \vx_i - \vx_j,  \bm{\varphi}(\vy) \rangle  \rho (\vy) \, d\cH^{d-1}(\vy)
\label{eq: h-deriv2}
\end{equation}
for $\vh = (h_1,\dots,h_N)^{\intercal}\in \R^N$.
\end{enumerate}
\end{theorem}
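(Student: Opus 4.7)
The plan is to exploit the piecewise constant structure of $\vT_{\vz}$: perturbing $\vz$ only shifts the hyperplanes separating the Laguerre cells, so the maps $\vT_{\vz_1}, \vT_{\vz_2}$ differ only on thin slivers near the facets $D_{ij}$, and both functionals can be computed to first order by a co-area decomposition of those slivers. For part (i), fix sequences $t_n \downarrow 0$ and $\vh_{\ell,n} \to \vh_\ell$, set $\vz_{\ell,n} := \vz^* + t_n\vh_{\ell,n}$, and partition the integrand according to where each map takes its values to obtain
\[
\delta_s(\vz_{1,n},\vz_{2,n}) = \sum_{1 \le i < j \le N} \|\vx_i - \vx_j\|^s\, R(A^{ij}_n),
\]
where $A^{ij}_n := (C_i(\vz_{1,n}) \cap C_j(\vz_{2,n})) \cup (C_j(\vz_{1,n}) \cap C_i(\vz_{2,n}))$. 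Each $A^{ij}_n$ sits in a slab around the hyperplane $\{\langle \vx_i - \vx_j,\vy\rangle = b_{ij}\}$ whose faces are at the heights $b_{ij}(\vz_{\ell,n}) = b_{ij} + t_n(h_{\ell,n,j} - h_{\ell,n,i})$. Applying the co-area formula along the direction $(\vx_i-\vx_j)/\|\vx_i-\vx_j\|$ against the continuous density $\rho$ yields
\[
R(A^{ij}_n) = t_n\,\frac{|h_{2,n,j} - h_{2,n,i} - h_{1,n,j} + h_{1,n,i}|}{\|\vx_i-\vx_j\|}\,R^{+}(D_{ij}) + o(t_n),
\]
and dividing by $t_n$ and letting $n \to \infty$ gives (\ref{eq: h_deriv}).

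For part (ii), I would start from the signed decomposition
\[
\gamma_{\bm{\varphi}}(\vz^* + t\vh) - \gamma_{\bm{\varphi}}(\vz^*) = \sum_{i \ne j} \int_{C_i(\vz^* + t\vh) \cap C_j(\vz^*)} \langle \bm{\varphi}(\vy),\vx_i - \vx_j\rangle\, dR(\vy).
\]
For fixed $(i,j)$, exactly one of $C_i(\vz^*+t\vh)\cap C_j(\vz^*)$ and $C_j(\vz^*+t\vh)\cap C_i(\vz^*)$ has non-negligible measure, selected by the sign of $h_i - h_j$; the same co-area argument reduces the integrand on the sliver to a surface integral of $\langle \bm{\varphi},\vx_i-\vx_j\rangle$ against $\rho$ on $D_{ij}$. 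Combining the two orderings $(i,j)$ and $(j,i)$ makes the directional signs cancel and leaves the unordered pair contributing $t(h_i - h_j)\|\vx_i-\vx_j\|^{-1}\int_{D_{ij}}\langle \bm{\varphi},\vx_i - \vx_j\rangle\rho\, d\cH^{d-1}$, which matches (\ref{eq: h-deriv2}). Since the resulting derivative is linear in $\vh$ and the co-area error is uniform over bounded $\vh$, Gateaux differentiability upgrades to Fr\'echet differentiability.

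The main obstacle will be making the co-area step rigorous, which requires controlling two distinct error sources. First, the slab near $D_{ij}$ must be trimmed by the $O(t_n)$-neighborhoods of the lower-dimensional faces $D_{ij}\cap D_{ik}$, where a third Laguerre cell becomes the active constraint; these corners have finite $\cH^{d-2}$-measure, so the corresponding $R$-volume is $O(t_n^2) = o(t_n)$, but verifying this rigorously needs a careful enumeration of the facial structure of the polyhedra $C_i(\vz)$ and bounds of the type $R^{+}(\partial D_{ij}) = 0$ coming from the fact that $\partial D_{ij}$ is $(d-2)$-dimensional. Second, for the linear functional the passage from the sliver integral of $\bm{\varphi}$ to the surface integral over $D_{ij}$ requires a Lebesgue-type differentiation along the normal to $D_{ij}$; this is exactly where Assumption \ref{asp: text function} enters, since the Hausdorff-dimension bound on the discontinuity set of $\bm{\varphi}$ ensures that $\cH^{d-1}$-almost every point of $D_{ij}$ is a continuity point of $\bm{\varphi}$, so that the slab averages converge to boundary traces. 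Once these ingredients are in place, both formulas follow by summing the pair contributions, and the Hadamard (rather than merely Gateaux) character of (i) follows because the error estimates are uniform over the perturbations $\vh_{\ell,n}$ within any bounded set.
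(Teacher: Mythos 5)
Your proposal captures the right high-level strategy (decompose into thin slivers near the facets $D_{ij}$, apply a co-area/Fubini argument along the normal direction, control corner contributions as $o(t)$, then sum), which is indeed what the paper does. However, there are genuine gaps that the paper has to work to close and that your outline leaves unaddressed.

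First, your $O(t_n^2)$ estimate for the corner contributions is justified by asserting that the faces $D_{ij}\cap D_{ik}$ have finite $\cH^{d-2}$-measure. This is only true when the normals $\vx_i - \vx_j$ and $\vx_i - \vx_k$ are \emph{linearly independent}. If $\vx_i - \vx_k = c(\vx_i - \vx_j)$ for some $c \ne 0$ (parallel hyperplanes), the intersection of the two $O(t)$-slabs is not a lower-dimensional corner but potentially another $(d-1)$-dimensional slab, and the argument collapses. The paper devotes considerable effort (Step 1) to ruling out this degenerate case by a sign analysis on $c$: in each of the three subcases $c<0$, $c\in(0,1)$, $c>1$ one of the Laguerre cells $C_i$, $C_k$, $C_j$ is forced to be a hyperplane, hence has $R$-measure zero, contradicting $p_\ell > 0$. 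Without this case analysis the $o(t)$ claim is unproved, and "careful enumeration of the facial structure" does not, by itself, dispose of parallel hyperplanes.

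Second, your route from Gateaux to Hadamard (and to Fréchet in part (ii)) is asserted via "error estimates are uniform over bounded perturbations," but you give no mechanism for proving this uniformity, and doing so directly would be awkward because the error terms come from the corner trimming. The paper instead proves \emph{local Lipschitz continuity} of $\delta_s$ and $\gamma_{\bm\varphi}$ as functions of $\vz$ (via Lemma~\ref{lem: quant_stability}, which bounds $R(C_i(\vz)\setminus C_i(\vz'))$ linearly in $\|\vz-\vz'\|$), and then invokes the standard fact that Gateaux directional differentiability plus local Lipschitz continuity implies Hadamard directional differentiability (\cite{shapiro1990concepts}). This is the cleaner and decisive step; you should prove a stability estimate of this type rather than appeal to an unproven uniformity.

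Third, the co-area step followed by taking a limit requires a dominated convergence argument (the paper's Lemma~\ref{lem: DCT}) and also has to handle the possibility that $D_{ij}$ meets $\partial\cY$ on a set of positive $\cH^{d-1}$-measure, where $\rho$ may jump. The paper shows this can only happen when $\cY$ is polyhedral and the bounding hyperplane supports $\cY$, which again forces a zero-measure Laguerre cell and a contradiction. Your proposal skips this, so the convergence of the slab average to the surface integral is not justified at such facets. With these three ingredients supplied, the rest of your outline goes through, and for part (ii) your observation about the role of Assumption~\ref{asp: text function} (discontinuity set of $\bm\varphi$ of Hausdorff dimension $<d-1$ ensures $\cH^{d-1}$-a.e.\ continuity on $D_{ij}$) is correct and matches the paper.
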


The derivative of $\delta_s$ in (\ref{eq: h_deriv}) is nonlinear in $(\vh_1,\vh_2)$, so $\delta_s$ is only directionally differentiable. On the other hand, $\gamma_{\bm{\varphi}}$ is Fr\'{e}chet differentiable and the derivative in (\ref{eq: h-deriv2}) is linear in $\vh$. The derivative $[\gamma_{\bm{\varphi}}]_{\vz^*}'$ may vanish depending on the choice of $\bm{\varphi}$. For example, if $\bm{\varphi}$ vanishes on the boundaries of the Laguerre cells $\{ C_i \}_{i=1}^N$, then $[\gamma_{\bm{\varphi}}]_{\vz^*}' \equiv 0$. However, the derivative may be nonvanishing if the support of $\bm{\varphi}$ intersects the boundaries of the Laguerre cells. 

The proof of Theorem \ref{thm: Hadamard_diff} is lengthy and constitutes the majority of Section \ref{sec: proofs}. For Part (i), we observe that 
\[
\delta_s(\vz^* + t\vh_1,\vz^*+t\vh_2) = \sum_{1 \le i \ne j \le N} \|\vx_i-\vx_j\|^s R\big( C_i(\vz^*+t\vh_1) \cap C_j(\vz^*+t\vh_2) \big).
\]
The proof proceeds by carefully analyzing the facial structures of polyhedral sets of the form $C_i(\vz^*+t\vh_1) \cap C_j(\vz^*+t\vh_2)$, which gives rise to a Gateaux directional derivative. To lift the Gateaux differentiability to the Hadamard one, we establish local Lipshitz continuity of $\delta_s$ (cf. \cite{shapiro1990concepts}). The proof of Part (ii) is similar upon observing that
\[
\gamma_{\bm{\varphi}}(\vz^* + t\vh) - \gamma_{\bm{\varphi}}(\vz^*) = \sum_{1 \le i \neq j \le N} \int_{C_i(\vz^*) \cap C_j(\vz^* + t\vh)} \langle \vx_j - \vx_i, \bm{\varphi}(\vy) \rangle \ dR(\vy).
\]
Assumption \ref{asp: text function} is essential to guarantee that the discontinuities of $\bm{\varphi}$ do not affect the local behavior of $\bm{\varphi}$ around each $D_{ij} = C_i \cap C_j$ with respect to $\cH^{d-1}$. 
Indeed, if the set of discontinuities of $\bm{\varphi}$ has positive $\calH^{d-1}$-measure, then the function $\gamma_{\bm{\varphi}}$ is in general not Fr\'{e}chet differentiable. To see this, consider $\bm{\varphi} = \vT^*$, for which $\gamma_{\bm{\varphi}} (\vz) - \gamma_{\bm{\varphi}}(\vz^*)= -\| \vT_{\vz} - \vT^* \|_{L^2(R)}^2/2 + (\| \vT_{\vz}\|^2_{L^2(R)} -  \| \vT^*\|^2_{L^2(R)})/2$. The function $\vz \mapsto \| \vT_{\vz}  \|_{L^2(R)}^2 = \sum_{i=1}^N R(C_i(\vz)) \| \vx_i \|^2$ is Fr\'{e}chet differentiable at $\vz^*$ from the proof of Theorem \ref{thm: Hadamard_diff}, while $\vz \mapsto \| \vT_{\vz} - \vT^* \|_{L^2(R)}^2$ is not by Theorem \ref{thm: Hadamard_diff} (i), so $\gamma_{\bm{\varphi}}$ is not Fr\'{e}chet differentiable at $\vz^*$.

\begin{remark}[Spherical uniform distribution]
\label{rem: spherical uniform}
One of the common reference measures used in the literature is the spherical uniform distribution \cite{hallin2021distribution},
\[
\rho (\vy) = \frac{1}{\|\vy\|^{d-1}\omega_{d-1}} \mathbbm{1}_{B_1 \setminus \{ \vzero \}}(\vy),
\]
where $\omega_{d-1}$ is the volume of the unit sphere in $\R^d$. 
However, the results of Theorem~\ref{thm: Hadamard_diff} do not hold for the spherical uniform distribution. Consider the case where $\vx_1 = (1,0)^{\intercal}, \vx_2 = (-1,0)^{\intercal}$, and $P(\{ \vx_1 \}) = P(\{ \vx_2 \}) = 1/2$, for which $C_1(\vz^*)=\{\vy : y_1 \ge 0 \}$ and $C_2(\vz^*) = \{ \vy : y_1 \le 0 \}$. 
Since $R^+(D_{12}) = R^+ (\{ \vy : y_1 = 0 \}) = \omega_1^{-1}\int_{-1}^1 |y_2|^{-1} dy_2 = \infty$, the derivative formulas in Theorem \ref{thm: Hadamard_diff} do not hold in general. 

Nevertheless, an inspection of the proof shows that the results of Theorem \ref{thm: Hadamard_diff} hold for the spherical uniform distribution if the hyperplane containing each $D_{ij}$ does not intersect the origin (the $L^1$-Poincar\'e inequality follows by the argument in the proof of Proposition A.1 in \cite{kitagawa2019convergence}); however, the said assumption seems difficult to verify in practice. 
\end{remark}

\subsection{Limit theorems}
\label{sec: limit}
Recall that $\vX_1,\dots,\vX_n$ is an i.i.d. sample from $P$ with empirical distribution $\hat{P}_n = n^{-1}\sum_{i=1}^n \delta_{\vX_i}$. The frequency vector for $\hat{P}_n$ is $\hat{\vp}_n = (\hat{p}_{n,1},\dots,\hat{p}_{n,N})^{\intercal} = \big(\hat{P}_n(\{\vx_1\}),\dots,\hat{P}_n(\{\vx_N \})\big)^{\intercal}$. Before deriving the limit theorems for the (functionals of interest of the) semidiscrete OT map, we briefly rederive the CLT for the empirical dual potential vector. The latter result was originally established in \cite{del2022central} for compact  $\cY$ by combining the results from \cite{kitagawa2019convergence} and the $M$-estimation (or $Z$-estimation) machinery. We present a slightly different and  direct derivation that relies on the delta method---an approach that lends better for the subsequent bootstrap consistency analysis. Also, in contrast to \cite{del2022central}, our derivation allows for unbounded $\cY$, which requires additional work. Asymptotic efficiency of the empirical estimator $\langle \bm{\varphi},\hat{\vT}_n \rangle_{L^2(R)}$ follows as a direct byproduct of our approach.

We first need to guarantee the uniqueness (identification) of an optimal solution to the dual problem (\ref{eq: semi_dual}) subject to proper normalization. Obviously, if $\vz$ is optimal for (\ref{eq: semi_dual}), then $\vz + \mathsf{c}\vone$ for any $\mathsf{c} \in \R$ is optimal as well, so we make the normalization that $\langle \vz,\vone \rangle = 0$. Proposition \ref{lem: kitagawa} below shows the uniqueness of the dual potential vector under this normalization as well as its Hadamard differentiability. An application of the delta method then yields a limit distribution for $\hat{\vz}_n$.

To state the proposition, we need additional notations. Let 
\[
\cQ= \Big\{ \bm{q} \in\RR^{N}_{\geq 0} : 
\langle \bm{q},\vone \rangle = 1 \Big \}
\]
denote the set of probability simplex vectors.
We may identify $\bm{q} \in \cQ$ with the probability measure $\sum_{i=1}^{N}q_i \delta_{\vx_i}$. 
 Also, define 
\[
\cQ_{+} = \cQ \cap \R_{>0}^N.
\]
Set the objective function in the dual problem as
\[
\Phi(\vz,\bm{q}) = \langle \vz,\bm{q} \rangle+ \int \min_{1 \leq i\leq N} \left ( \frac{1}{2} \| \vy-\vx_i \|^2 - z_i\right )\,dR(\vy)
\]
for $\vz \in \R^N$ and $\bm{q} \in \cQ$. We are ready to state Proposition \ref{lem: kitagawa}. The proof essentially relies on the results from \cite{kitagawa2019convergence, bansil2022quantitative}, but requires additional work to cover the case where the set $\cY$ is unbounded and $\rho$ has discontinuities on $\cY$. Recall that we have assumed that $\vp \in \cQ_{+}$.

\begin{proposition}[Uniqueness and differentiability of dual potential vector]
\label{lem: kitagawa}
\textcolor{black}{
Under Assumptions \ref{asp: cont} and \ref{asp: poincare}, 
for every $\bm{q} \in \cQ_+$, the dual problem
\begin{equation}
\max_{\vz \in \langle \vone \rangle^{\bot}} \Phi(\vz,\bm{q})
\label{eq: dual}
\end{equation}
admits the unique optimal solution $\vz^*(\bm{q})$, where $\langle \vone \rangle^{\bot} = \{ \vz \in  \R^N : \langle \vz,\vone \rangle = 0 \}$. Furthermore, the mapping $\cQ_+ \ni \bm{q} \mapsto \vz^*(\bm{q})$ is Hadamard differentiable at $\vp$ tangentially to $\langle \vone \rangle^{\bot}$, i.e., there exists an $N \times N$ matrix $\bm{B}$ such that for every $t_n \downarrow 0$ and $\langle \vone \rangle^\bot \ni \vh_n \to \vh$, one has $\vz^* (\vp + t_n \vh_n) = \vz^*(\vp) + t_n\bm{B}\vh + o(t_n)$.}
\end{proposition}

\begin{remark}
\label{rem: rank}

(i) Since $\Phi (\vz+\mathsf{c} \vone,\bm{q}) = \Phi(\vz,\bm{q})$ for any $\mathsf{c} \in \R$, $\vz^*(\bm{q})$ maximizes $\Phi(\cdot,\bm{q})$ over the entire $\R^{N}$ space. \textcolor{black}{ (ii) The uniqueness of the dual potential vector is known to hold under a (somewhat) weaker assumption. Indeed, the uniqueness holds provided that the reference measure $R$ (has a finite second moment and) is absolutely continuous and its support agrees with the closure of a connected open set; see Theorem 7.18 in \cite{santambrogio15} and its proof. See also Remark 3.1 in \cite{bansil2022quantitative}.
(iii) 
The matrix $\bm{B}$ is not unique (one may add any matrix orthogonal to $\vone$), but its restriction to $\langle \vone \rangle^\bot$ is unique. 
Let $\bm{L} = (\ell_{ij})_{1 \le i,j \le N}$ be the symmetric matrix with $\ell_{ij} = - \|\vx_i-\vx_j\|^{-1} R^+(D_{ij})$ for $i \ne j$ and $\ell_{ii} = \sum_{k \ne i} \|\vx_i-\vx_k\|^{-1} R^+(D_{ik})$. The proof of Proposition \ref{lem: kitagawa} (or a simple adaptation of the results from \cite{kitagawa2019convergence,bansil2022quantitative}) shows that $\bm{L}$ is positive semidefinite with eigenvalues $0=\lambda_1 < \lambda_2 \le \cdots \le \lambda_{N}$ and eigenvector $\vone$ corresponding to eigenvalue $\lambda_1  = 0$. Denoting its spectral expansion by $\bm{L} = \sum_{i=2}^N \lambda_i \bm{P}_i$, $\bm{L}$ has formal inverse $\bm{L}^{-1} = \sum_{i=2}^N \lambda_i^{-1} \bm{P}_i$. Then, the proof of Proposition \ref{lem: kitagawa} yields that the restriction of $\bm{B}$ to $\langle \vone \rangle^\bot$ agrees with $\bm{L}^{-1}$ and is isomorphic onto $\langle \vone \rangle^\bot$. 
}
\end{remark}

Having  Proposition \ref{lem: kitagawa}, we now derive the CLT for $\hat{\vz}_n$  via the delta method. In what follows, we normalize $\vz^*$ and $\hat{\vz}_n$ in such a way that they are orthogonal to $\vone$.  Observe that $\vz^* = \vz^*(\vp)$ and $\hat{\vz}_n = \vz^*(\hat{\vp}_n)$ when $\hat{\vp}_n \in \cQ_+$, which holds with probability approaching one (indeed, $\hat{\vp}_n \in \cQ_+$ holds with probability at least $1-e^{-\mathsf{c}n}$ for some constant $\mathsf{c} > 0$). 
Since $\sqrt{n}(\hat{\vp}_n-\vp) \stackrel{d}{\to} \cN(\vzero,\bm{A})$ with $\bm{A} = \mathrm{diag} \{ p_1,\dots,p_N \} - \vp \vp^\intercal$
by the multivariate CLT, an application of the delta method yields 
\begin{equation}
\sqrt{n}(\hat{\vz}_n-\vz^*) \stackrel{d}{\to} \cN(\vzero,\bm{B}\bm{A}\bm{B}^\intercal). \label{eq: CLT}
\end{equation}
Since $\langle \hat{\vz}_n-\vz^*, \vone \rangle = 0$, the Gaussian distribution $\cN(\vzero,\bm{B}\bm{A}\bm{B}^\intercal)$ is singular. Note that, the matrix $\bm{A}$ (the covariance matrix of a multinomial vector) has rank $N-1$ (cf. \cite{tanabe1992exact}) and its null space agrees with $\langle \vone \rangle^\bot$, so in view of Remark \ref{rem: rank}, the matrix $\bm{B}\bm{A}\bm{B}^{\intercal}$ is isomorphic from $\langle \vone \rangle^{\bot}$ onto $\langle \vone \rangle^{\bot}$.

Now, combining \eqref{eq: CLT} and the stability results in Theorem \ref{thm: Hadamard_diff}, together with the extended delta method (cf. Lemma \ref{lem: functional delta method}), we obtain the following theorem that provides distributional limits and moment convergence  of $\hat{\vT}_n = \vT_{\hat{\vz}_n}$ to $\vT^* = \vT_{\vz^*}$, under the functionals of interest.

\begin{theorem}[Limit distributions]
\label{thm: limit}
Suppose Assumptions \ref{asp: cont} and \ref{asp: poincare} hold. Let $\vW = (W_1,\dots,W_N)^{\intercal} \sim \cN(\vzero,\bm{B}\bm{A}\bm{B}^\intercal)$. Then the following hold.
\begin{enumerate}
    \item[(i)] For arbitrary $s \in [1,\infty)$, we have 
    \begin{equation}
    \sqrt{n} \| \hat{\vT}_n-\vT^* \|_{L^s(R)}^s \stackrel{d}{\to}  \sum_{1 \leq i < j \leq N} \|\vx_i - \vx_j\|^{s-1} \, R^{+}(D_{ij})\, |W_i-W_j|.
    \label{eq: limit law}
    \end{equation}
    The limit law is absolutely continuous and its density is positive almost everywhere on $[0,\infty)$. Finally, for every continuous function $\Upsilon$ on $\R_{\ge 0}$ with polynomial growth, we have, for $V$ denoting the limit variable in (\ref{eq: limit law}), 
    \[
    \lim_{n \to \infty} \E\left [ \Upsilon\left ( \sqrt{n} \| \hat{\vT}_n-\vT^* \|_{L^s(R)}^s \right) \right] = \E[\Upsilon (V)]. 
    \]
    \item[(ii)] Suppose in addition that Assumption \ref{asp: text function} holds. Then
    \[
    \sqrt{n} \langle \bm{\varphi},\hat{\vT}_n - \vT^* \rangle_{L^2(R)} \stackrel{d}{\to} \cN(0,\sigma_{\bm{\varphi}}^2),
    \]
    where $\sigma_{\bm{\varphi}}^2$ is the variance of the following random variable
    \[
    \sum_{1\leq i < j \leq N} \frac{W_i - W_j}{\|\vx_i - \vx_j\|}\int_{D_{ij}}  \langle \vx_i - \vx_j,  \bm{\varphi}(\vy) \rangle  \rho (\vy) \, d\cH^{d-1}(\vy).
    \]
    Furthermore, for every continuous function $\Upsilon$ on $\R$ with polynomial growth, we have
    \[
\lim_{n \to \infty} \E\left [ \Upsilon \left (  \sqrt{n} \langle \bm{\varphi},\hat{\vT}_n - \vT^* \rangle_{L^2(R)}\right ) \right] = \E\left [ \Upsilon \left (\cN(0,\sigma_{\bm{\varphi}}^2) \right) \right ].
    \]
\end{enumerate}
\end{theorem}

\begin{remark}[Asymptotic efficiency of $\langle \bm{\varphi},\hat{\vT}_n \rangle_{L^2(R)}$]
Consider Part (ii) and assume that $\sigma_{\bm{\varphi}}^2 > 0$. 
Observe that $\langle \bm{\varphi},\vT^* \rangle_{L^2(R)} = \gamma_{\bm{\varphi}} (\vz^*(\vp))$ and the function $\bm{q} \mapsto \gamma_{\bm{\varphi}} (\vz^*(\bm{q}))$ is Hadamard differentiable at $\vp$.
Since $\hat{\vp}_n$ is the maximum likelihood estimator for $\vp$, the plug-in estimator $\langle \bm{\varphi},\hat{\vT}_n \rangle_{L^2(R)} = \gamma_{\bm{\varphi}} (\vz^*(\hat{\vp}_n))$ is asymptotically efficient in the Haj\'{e}k-Le Cam sense. See Chapter 8 in \cite{vanderVaart1998asymptotic} for details. \textcolor{black}{We will show in Section \ref{sec: dual holder} that, when viewed as elements of the dual of a H\"{o}lder space, the empirical OT map is asymptotically efficient. }
\end{remark}

\begin{remark}[Simple example]
\label{lem: simple case}
The following setting ties the linear functional case to the 2-Wasserstein distance. Assume that $\calY$ is bounded and consider $\bm{\varphi}(\vy) = \vy\mathbbm{1}_{\cY}(\vy)$, for which we have $\langle \bm{\varphi},\vT^* \rangle_{L^2(R)} = \int \langle \vy,\vx \rangle \, d\pi^*(\vy,\vx)$ for the optimal solution $\pi^*$ to the Kantorovich problem (\ref{eq: Kantorovich}). Denoting by $\mathsf{W}_2^2(R,P)$ the squared $2$-Wasserstein distance, i.e., twice the optimal value in (\ref{eq: Kantorovich}), we have $\mathsf{W}^2_2(R,P) = \int \|\vy\|^2 \, dR(\vy) + \int \|\vx\|^2 \, dP(\vx) - 2\langle \bm{\varphi},\vT^* \rangle_{L^2(R)}$.
Then, under Assumptions \ref{asp: cont} and \ref{asp: poincare}, we have $\sigma_{\bm{\varphi}}^2 = \Var_P\big(\|\cdot\|^2/2-\psi\big)$, where $\psi: \cX \to \R$ is a function defined by $\psi(\vx_i) = z_i^*$ for $i \in \{1,\dots,N\}$; see Section \ref{sec: proof limit} for a proof. 
Hence, in this case, we have $\sqrt{n}\langle \bm{\varphi},\hat{\vT}_n - \vT^* \rangle_{L^2(R)} \stackrel{d}{\to} N\big(0,\Var_P\big(\|\cdot\|^2/2-\psi\big)\big)$. This is consistent with (the implication of) Theorem 4.3 in \cite{del2019clt}.
\end{remark}

The second claim of Theorem \ref{thm: limit} (i) shows that the limit law in (\ref{eq: limit law}) is nondegenerate. This follows from the next proposition, which also derives an explicit form of density. The proof relies on Theorem 11.1 in \cite{Davydov1998} and the coarea formula.  

\begin{proposition}[Density formula for non-Gaussian limit law]
\label{prop: density}
Let $B = (\beta_{ij})_{1 \le i,j \le N}$ be a nonnegative symmetric matrix with zero diagonal entries such that $\sum_{j=1}^N \beta_{ij} 
> 0$ for every $i \in \{ 1,\dots, N \}$, and $\vW_{-N} = (W_1,\dots,W_{N-1})^{\intercal} \sim \mathcal{N}(\vzero,\bm{\Sigma})$ with $\bm{\Sigma}$ being nonsingular. Denote by $\phi_{\bm{\Sigma}}$ the density of $\vW_{-N}$ and 
set $W_N = -\sum_{i=1}^{N-1}W_i$. Consider the random variable 
$
V= g(\vW_{-N}) = \sum_{1 \le i < j \le N}\beta_{ij} |W_i-W_j|. 
$
Define 
\[
\mathsf{C}(B) =  \left \| \Big ( -\textstyle \sum_{j=1}^{i-1} \beta_{ij} + \sum_{j=i+1}^N  \beta_{ij} + \sum_{j=1}^{N-1}\beta_{Nj} \Big)_{i=1}^N \right \|.
\]
For every permutation $\sigma$ of $\{ 1,\dots, N \}$, set $B_{\sigma} = (\beta_{\sigma(i),\sigma(j)})_{1 \le i,j \le N}$ and $E_\sigma = \{ \vw_{-N} : w_{\sigma(1)} > \cdots > w_{\sigma(N)} \}$ with $w_N = -\sum_{i=1}^{N-1}w_i$.  Then, the following hold.
\begin{enumerate}
    \item[(i)] The law of $V$ is absolutely continuous and its density is positive almost everywhere on $[0,\infty)$.
    \item[(ii)] A version of the density of $V$ is given by
\[
f_{V}(v) = \sum_{\sigma} \frac{1}{\mathsf{C}(B_\sigma)} \int_{\{ g(\vw_{-N})=v \} \cap E_\sigma} \phi_{\bm{\Sigma}}(\vw_{-N}) \, d\calH^{N-2}(\vw_{-N}), \ v \in [0,\infty),
\]
where $\sum_{\sigma}$ is taken over all permutations $\sigma$ of $\{ 1,\dots, N\}$. 
\end{enumerate} 
\end{proposition}

The third claim of Theorem \ref{thm: limit} (i) in particular implies that the squared $L^2(R)$-risk of $\hat{\vT}_n$ can be  expanded as 
\[
\E\big [\| \hat{\vT}_n - \vT^* \|_{L^2(R)}^2 \big] = n^{-1/2}\sum_{i<j} \|\vx_i-\vx_j\| R^+(D_{ij}) \E[|W_i-W_j|] + o(n^{-1/2}),
\]
which is also a  new result in the literature. 

For the linear functional case, 
the next lemma gives a necessary and sufficient condition for the asymptotic variance $\sigma_{\bm{\varphi}}^2$ in Theorem \ref{thm: limit} (ii) to be strictly positive. Set 
\[
a_{ij}^{\bm{\varphi}} = \frac{1}{\|\vx_i - \vx_j\|}\int_{D_{ij}}  \langle \vx_i - \vx_j,  \bm{\varphi}(\vy) \rangle  \rho (\vy) \, d\cH^{d-1}(\vy), \quad i \ne j
\]
and $\tilde{a}_{i}^{\bm{\varphi}} = \sum_{j \ne i, j=1}^N a_{ij}^{\bm{\varphi}}$ for $i=1,\dots,N$

\begin{lemma}[Nondegeneracy of $\sigma_{\bm{\varphi}}^2$]
\label{lem: nondegeneracy}
Suppose that Assumptions \ref{asp: cont}--\ref{asp: text function} hold. Then, the asymptotic variance $\sigma_{\bm{\varphi}}^2$ in Theorem \ref{thm: limit} (ii) is zero if and only if  $\tilde{a}_{1}^{\bm{\varphi}} = \cdots =\tilde{a}_{N}^{\bm{\varphi}}$.
\end{lemma}

The limit distributions in Theorem \ref{thm: limit} depend on the population distribution $P$ in a complicated way, and the analytical estimation is nontrivial. The bootstrap offers an appealing alternative route for statistical inference. Our next result establishes consistency of 
the nonparametric bootstrap  for estimating the distributional limits.

Let $\vX_1^B,\dots,\vX_n^B$ be an i.i.d. sample from $\hat{P}_n$ conditional on $\vX_1,\dots,\vX_n$ and  $\hat{P}_n^B=n^{-1}\sum_{i=1}^n \delta_{\vX_i^B}$ denote the bootstrap empirical distribution. Let \[
\hat{\vp}_n^B = (\hat{p}_{n,1}^B,\dots,\hat{p}_{n,N}^B)^{\intercal} = \big(\hat{P}_n^B(\{\vx_1\}),\dots,\hat{P}_n^B(\{\vx_N \})\big)^{\intercal}
\]
denote the corresponding frequency vector. Let $\hat{\vz}_n^B \in \langle \vone \rangle^\bot$ be an optimal solution to the dual problem (\ref{eq: semi_dual}) with $\vp$ replaced by $\hat{\vp}_n^B$, and set $\hat{\vT}_n^B = \vT_{\hat{\vz}_n^B}$. Note that $\hat{\vz}_n^B = \vz^*(\hat{\vp}_n^B)$ when $\hat{\vp}_n^B \in \cQ_+$, which holds with probability approaching one. 

For a sequence of (univariate) bootstrap statistics $S_n^B$ (i.e., functions of $\vX_1,\dots,\vX_n$ and $\vX_1^B,\dots,\vX_n^B$) and a (nonrandom) distribution $\nu$ on $\R$, we say that \textit{the conditional law of $S_n^B$ given the sample converges weakly to $\nu$  in probability}  if  
\[
\sup_{g \in \mathsf{BL}_1(\R)} \Big |\E\left [ g(S_n^B) \big| \vX_1,\dots, \vX_n \right] - \E_{S \sim \nu}[g(S)] \Big | \to 0
\]
in probability, where $\mathsf{BL}_1(\R)$ is the set of $1$-Lipschitz functions $g: \R \to [-1,1]$; cf. Chapter 3.6 in \cite{van1996weak} and Chapter  23 in \cite{vanderVaart1998asymptotic}.

We are now ready to state the bootstrap consistency results. 

\begin{theorem}[Bootstrap consistency]
\label{prop: bootstrap consistency}
Suppose Assumptions \ref{asp: cont} and \ref{asp: poincare} hold. Then the following hold.
\begin{enumerate}
    \item[(i)] For arbitrary $s \in [1,\infty)$, the conditional law of $\sqrt{n}\| \hat{\vT}_n^B - \hat{\vT}_n\|_{L^s(R)}^s$ given the sample converges weakly to the limit law in (\ref{eq: limit law}) in probability. 
    \item[(ii)] Suppose in addition that Assumption \ref{asp: text function} holds. Then the conditional law of $\sqrt{n} \langle \bm{\varphi},\hat{\vT}_n^B - \hat{\vT}_n \rangle_{L^2(R)}$ given the sample converges weakly to $\cN(0,\sigma_{\bm{\varphi}}^2)$, where $\sigma_{\bm{\varphi}}^2$ is given in Theorem \ref{thm: limit} (ii). Furthermore, for $\hat{\sigma}_{n}^2 = n \E\big[\langle \bm{\varphi},\hat{\vT}_n^B - \hat{\vT}_n \rangle_{L^2(R)}^2 \mid \vX_1,\dots,\vX_n \big]$, we have $\hat{\sigma}_{n}^2 \to \sigma_{\bm{\varphi}}^2$ in probability. 
\end{enumerate} 
\end{theorem}

The proof first establishes a conditional CLT for $\sqrt{n}(\hat{\vz}_n^B-\hat{\vz}_n)$, which follows from the delta method for the bootstrap. Given this, the first claim of Part (ii) follows from another application of the delta method for the bootstrap, since the mapping $\vz \mapsto \gamma_{\bm{\varphi}}(\vz)$ is (Fr\'{e}chet) differentiable at $\vz^*$. The second claim of Part (ii), which establishes consistency of the bootstrap variance estimator, follows by verifying ``conditional'' uniform integrability of $n\langle \bm{\varphi},\hat{\vT}_n^B - \hat{\vT}_n \rangle_{L^2(R)}^2$; cf. Lemma 2.1 in \cite{kato2011note}.

Part (i) might seem surprising as the corresponding mapping $(\vz_1,\vz_2) \mapsto \delta_s(\vz_1,\vz_2)$ is only directionally differentiable with a nonlinear derivative. 
In fact, \cite{dumbgen1993nondifferentiable} and \cite{fang2019} show that the bootstrap fails to be consistent for functionals with nonlinear Hadamard derivatives. However, their results do not collide with Part (i). The results of \cite{dumbgen1993nondifferentiable} and \cite{fang2019} applied to our setting show that the conditional law of $\sqrt{n}\big(\| \hat{\vT}_n^B - \vT^* \|_{L^s(R)}^s - \| \hat{\vT}_n - \vT^* \|_{L^s(R)}^s\big) = \sqrt{n}\big(\delta_s(\hat{\vz}_n^B,\vz^*) - \delta_s(\hat{\vz}_n,\vz^*)\big)$ fails to be consistent for estimating the limit law in (\ref{eq: limit law}), which is indeed the case, but our application of the bootstrap is different and uses $\sqrt{n}\| \hat{\vT}_n^B - \hat{\vT}_n \|_{L^s(R)}^s = \sqrt{n}\delta_s(\hat{\vz}_n^B,\hat{\vz}_n)$ instead. See Proposition 3.8 in \cite{goldfeld2022limit} for a related discussion. 

The proof of Part (i) goes as follows.
By the stability result from Theorem~\ref{thm: Hadamard_diff} (i), we can approximate $\sqrt{n}\delta_s(\hat{\vz}_n^B,\hat{\vz}_n)$ by
\[
\sqrt{n}[\delta_s]_{(\vz^*,\vz^*)}'(\hat{\vz}_n^B-\vz^*,\hat{\vz}_n-\vz^*),
\]
which, from the explicit expression of the derivative in (\ref{eq: h_deriv}), agrees with 
\[
[\delta_s]_{(\vz^*,\vz^*)}'\big(\sqrt{n}(\hat{\vz}_n^B-\hat{\vz}_n),\vzero\big). 
\]
The conditional law of the above converges weakly to the law of $[\delta_{s}]_{(\vz^*,\vz^*)}'(\vW,\vzero)$ with $\vW \sim \cN(\vzero,\bm{B}\bm{A}\bm{B}^\intercal)$, which agrees with the limit law in (\ref{eq: limit law}).

\begin{remark}[Dependent data]
Theorem \ref{thm: limit} follows from the Hadamard directional derivatives of the composite mappings $\bm{q} \mapsto \| \vT_{\vz^*(\bm{q})}-\vT_{\vz^*} \|_{L^2(R)}^s$ and $\bm{q} \mapsto \gamma_{\bm{\varphi}}(\vz^*(\bm{q}))$ combined with a CLT for $\hat{\vp}_n$, which is simply a sum of bounded random vectors, $\hat{p}_{i} = n^{-1}\sum_{j=1}^n \mathbbm{1}(\vX_j = \vx_i)$ for $i \in \{1,\dots,N\}$.  Hence, the conclusion of the theorem extends readily to dependent data. For example, suppose that $\vX_1,\vX_2,\dots \sim P$ are a stationary $\alpha$-mixing sequence,
\[
\alpha(k) \coloneqq \sup_{\ell \ge 1}\sup_{A \in \calF_{1}^\ell,B \in \calF_{k+\ell}^\infty} |\Prob (A)\Prob(B) - \Prob (A \cap B)| \to 0, \ k \to \infty,
\]
where $\calF_{i}^j$ is the $\sigma$-field generated by $\{ \vX_t : i \le t \le j \}$. Since the $\alpha$-mixing property is preserved under (measurable) transformations, the process $\{ \vX_t : t=1,2,\dots \}$ could be generated as a (discrete) transformation of another $\alpha$-mixing sequence. We refer to Chapter 2 in \cite{fan2003nonlinear} for details on mixing processes. Now, by Theorem 2.21 in \cite{fan2003nonlinear} and the Cr\'{a}mer-Wold device, as long as $\sum_{k=1}^\infty \alpha(k) < \infty$, one obtains 
\[
\sqrt{n}\big(\hat{\vp}-\vp\big) \stackrel{d}{\to} \mathcal{N} \big (\vzero,\bar{\bm{A}}\big),
\]
where the $(i,j)$-component of the $N \times N$ matrix $\bar{\bm{A}}$ is given by
\[
\begin{cases}
p_i(1-p_i) + 2\sum_{k=1}^\infty \Cov \big(\mathbbm{1}(\vX_1=\vx_i),\mathbbm{1}(\vX_{1+k}=\vx_i) \big) & \text{if $i=j$}, \\
-p_i p_j + 2\sum_{k=1}^\infty \Cov \big(\mathbbm{1}(\vX_1=\vx_i),\mathbbm{1}(\vX_{1+k}=\vx_j) \big) & \text{if $i=j$}.
\end{cases}
\]
Hence, the conclusion of Theorem \ref{thm: limit} continues to hold with $\bm{W}$ replaced by $\bar{\bm{W}} \sim \mathcal{N}(\bm{0},\bm{B}\bar{\bm{A}}\bm{B}^{\intercal} \big)$. 

On the other hand, Theorem \ref{prop: bootstrap consistency} does not directly extend to the dependent data scenario, because the nonparametric bootstrap fails to take into account the dependence of the data. Instead, one may use the moving block bootstrap \cite{kunsch1989}, which can be shown to be consistent for both functionals under mild regularity conditions. See \cite{lahiri2013resampling} for bootstrap methods for dependent data. 
\end{remark}

Finally, we state a super consistency result for the empirical OT map mentioned in Section \ref{sec: preliminaries}. We present a nonasymptotic version. 

\begin{proposition}[Super consistency]
\label{prop: pointwise}
Set $\mathsf{C}_{\vx} = \max_{1 \le i < j \le N} \|\vx_i-\vx_j\|$ and $\delta_0 = \min_{1 \le i \le N} p_i/2$.
Under Assumptions \ref{asp: cont} and \ref{asp: poincare}, for every $i \in \{ 1,\dots, N \}$ and compact set $K \subset \inte (C_i(\vz^*))$, we have 
\begin{equation}
\hat{\vT}_n(\vy) = \vT^*(\vy) \quad \text{for all} \ \vy \in K
\label{eq: super concentration}
\end{equation}
with probability at least
\[
1 - N e^{-2n \delta_0^2}  - (2^N-2) e^{-\frac{8n \varepsilon_0^2 \delta_0^2}{N^8 \mathsf{C}_{\vx}^2 \mathsf{C}_{\mathrm{P}}^2}},
\]
where $\varepsilon_0 = \min_{j \in \{ 1,\dots, N \} \setminus \{ i \}} \inf_{\vy \in K} \big \{ \langle \vx_i - \vx_j, \vy \rangle - b_{ij} \big\} > 0$.
Therefore, with probability one, \eqref{eq: super concentration} holds for all large enough $n$. 
\end{proposition}

The proof first verifies that \eqref{eq: super concentration} holds if $\| \hat{\vz}_n - \vz^* \| < \varepsilon_0/2$. The rest is to derive a concentration inequality for $\| \hat{\vz}_n - \vz^* \|$, which is done by bounding $\| \hat{\vz}_n-\vz^* \|$ by $\| \hat{\vp}_n-\vp \|$ up to a constant and invoking a concentration inequality for the latter in \cite{weissman2003inequalities}. The final claim follows from the Borel-Cantelli lemma.

The parameter $\varepsilon_0$ quantifies how close $K$ is to the boundary of $C_i(\vz^*)$. So the closer $K$ is to the boundary of $C_i(\vz^*)$, the less likely the event \eqref{eq: super concentration} happens. 

\textcolor{black}{
\begin{remark}[General integral error functionals]
   The results of Theorems~\ref{thm: Hadamard_diff} (i), \ref{thm: limit} (i), and \ref{prop: bootstrap consistency} (i) extend to more general error functionals than the $L^s$-error. Indeed, for any centrally symmetric function $\mathfrak{g}: \cX' \to \R$ with $\mathfrak{g}(\vzero)=0$ (here $\calX' = \{ \vx_i-\vx_j : 1 \le i,j \le N \}$), consider the following integral error functional:
    \[
    \mathsf{err}_{\mathfrak{g}} (\hat{\vT}_n - \vT^*)\coloneqq\int \mathfrak{g}(\hat{\vT}_n-\vT^*) \, dR
    \]
    Adapting the proof of Theorem ~\ref{thm: Hadamard_diff} (i), one can readily show that, under Assumption \ref{asp: cont}, the map $(\vz_1,\vz_2) \mapsto \mathsf{err}_{\mathfrak{g}} (\vT_{\vz_1}-\vT_{\vz_2})$ is Hadamard directionally differentiable at $(\vz^*,\vz^*)$ with derivative\footnote{As $\calX'$ is finite, $\mathfrak{g}$ is Lipschitz on $\calX'$, from which local Lipschiz continuity of $(\vz_1,\vz_2) \mapsto \mathsf{err}_{\mathfrak{g}} (\vT_{\vz_1}-\vT_{\vz_2})$ follows. The directional Gateaux derivative follows analogously to the $L^s$-error case. }
    \[
    (\vh_1,\vh_2) \mapsto \sum_{1 \le i < j \le N}\frac{\mathfrak{g}(\vx_i-\vx_j)}{\|\vx_i-\vx_j\|}R^+(D_{ij})|h_{2,j}-h_{2,i}-h_{i,j}+h_{1,i}|.
    \]
    The distributional limit and boostrap consistency for $\mathsf{err}_{\mathfrak{g}} (\hat{\vT}_n - \vT^*)$ follow analogously under Assumptions \ref{asp: cont} and \ref{asp: poincare}.
    This general setting allows to cover, e.g., the $L^s$-error for each coordinate, i.e.,  $\mathsf{err}_{\mathfrak{g}}(\hat{\vT}_n-\vT^*) = \| \hat{T}_{n,k} - T^*_k \|_{L^s(R)}^s$ when $\mathfrak{g}(\vx) = |x_k|^s$ for $\vx = (x_1,\dots,x_d)^\intercal$. 
\end{remark}
}

\section{Applications and numerical results}
\label{sec: applications}
\subsection{Applications}

The results of Theorems \ref{thm: limit} and \ref{prop: bootstrap consistency} enable us to construct $L^s$-confidence sets for $\vT^*$ and confidence intervals for $\langle \bm{\varphi},\vT^* \rangle_{L^2(R)}$. As a particular example of a linear functional, we consider a maximal tail correlation \cite{beirlant2020center}. 

\medskip 
4.1.1.~\textit{ $L^1$-confidence set and confidence band}. \ 
Consider constructing an $L^1$-confidence set for $\vT^*$. Given $\alpha \in (0,1)$, set $\hat{\tau}_{n,1-\alpha}$ as the conditional $(1-\alpha)$-quantile of $\sqrt{n}\| \hat{\vT}_n^B - \hat{\vT}_n\|_{L^1(R)}$, which can be computed via simulations.  The next corollary verifies the validity of the resulting $L^1$-confidence set. 

\begin{corollary}[Validity of $L^1$-confidence set]
\label{prop: L1 confidence set}
Under Assumption \ref{asp: cont} and \ref{asp: poincare}, the set
\begin{equation}
\big \{ \vT : \sqrt{n}\| \hat{\vT}_n-\vT \|_{L^1(R)} \le \hat{\tau}_{n,1-\alpha} \big \}
\label{eq: L1 confidence set}
\end{equation}
contains $\vT^*$ with probability approaching $1-\alpha$.
\end{corollary}

One drawback of $L^1$-confidence sets is that they are difficult to visualize compared to $L^\infty$-confidence bands. Section 5.8 in \cite{wasserman2006all} discusses a method to construct a confidence band from an $L^2$-confidence set, which builds on an idea in \cite{juditsky2003nonparametric}. Such a confidence band does not satisfy the uniform coverage guarantee but instead satisfies the \textit{average coverage}. We adapt the method in \cite[Section 5.8]{wasserman2006all} to $L^1$-confidence sets. Consider the confidence band of the form
\[
\cC_{n,1-\alpha}(\vy) = \left \{  \vx : \| \hat{\vT}_n (\vy) - \vx \| \le\frac{\hat{\tau}_{n,1-\alpha/2}}{\sqrt{n}} \cdot \frac{2}{\alpha} \right \}, \quad \vy \in \cY.
\]
Then, the argument in \cite[p.~95]{wasserman2006all} yields the following corollary.

\begin{corollary}[Validity of confidence band]
\label{prop: confidence band}
Under Assumption \ref{asp: cont} and \ref{asp: poincare}, the band $\cC_{n,1-\alpha}$ has average coverage at least $1-\alpha + o(1)$, i.e., 
\begin{equation}
\int \Prob \big (\vT^*(\vy) \in \cC_{n,1-\alpha}(\vy) \big) \, dR(\vy) \ge 1-\alpha + o(1).
\label{eq: average coverage}
\end{equation}
\end{corollary}
For the reader's convenience, we include the proof of Corollary \ref{prop: confidence band} in Section \ref{sec: proof Sec4}. 
Since $\vT^*$ only take values in $\cX$, we may intersect $\cC_{n,1-\alpha}(\vy)$ with $\cX$ to construct a tighter confidence band (cf. \cite{chernozhukov2020generic}), 
\[
\tilde{\cC}_{n,1-\alpha}(\vy) = \cC_{n,1-\alpha}(\vy) \cap \cX.
\]
The average coverage property continues to hold for the latter.

\medskip

4.1.2.~\textit{Inference for maximum tail correlation risk measure}. \ 
For univariate data, the quantile function (or Value at Risk) and its functionals such as the expected shortfall are commonly used as risk measures in financial risk management; see \cite{mcneil2015quantitative}. For multivariate $\vX \sim P$, \cite{ruschendorf2006law,ekeland2012comonotonic} propose a class of risk measures defined by the maximal correlation, 
\begin{equation}
\max \big \{ \E[ \langle \tilde{\vX}, \vY \rangle] : \tilde{\vX} \sim P, \vY \sim R \big \},
\label{eq: maximal correlation}
\end{equation}
where $R$ is a reference measure. In particular, \cite{ekeland2012comonotonic} give axiomatic characterizations of the maximum correlation, extending Kusuoka's characterizations of law invariant coherent risk measures to the multivariate case \cite{kusuoka2001law}. By definition, the maximal correlation (\ref{eq: maximal correlation}) agrees with $\E[ \langle \vY, \vT^*(\vY) \rangle]$ for $\vY \sim R$. 
Building on this observation, \cite{beirlant2020center} propose a modified risk measure defined by the maximal tail correlation $\E[ \langle \vY, \vT^*(\vY) \rangle \mid \| \vY \| \ge 1-\alpha ]$ when $R$ is the spherical uniform distribution. \cite{beirlant2020center} establish consistency of the empirical estimator for the maximal tail correlation, but do not develop methods of statistical inference for it. We consider a version of the maximal tail correlation for a different choice of the reference measure $R$ and construct confidence intervals for the risk measure. It should be noted that \cite{beirlant2020center} allow for a general target distribution $P$, while we focus here on finitely discrete $P$.

Fix any $\alpha,\beta \in (0,1)$.
Let $R$ be the uniform distribution over the unit ball $B_1$, which satisfies Assumptions \ref{asp: cont} (with $\cY = B_1$) and \ref{asp: poincare}. 
For $\vY \sim R$, let $r_{1-\alpha}$ denote the $(1-\alpha)$-quantile of $\| \vY \|$, i.e., $r_{1-\alpha} = (1-\alpha)^{1/d}$. Define a maximal tail correlation for $P$ by 
\be
\label{eq: kappa_def}
\kappa_{\alpha} = \E\big[\langle \vY, \vT^*(\vY) \rangle\mid \| \vY \| > r_{1-\alpha} \big] =\int \langle \bm{\varphi}_{\alpha} , \vT^* \rangle \, dR,
\ee
where $\bm{\varphi}_{\alpha} (\vy) = \alpha^{-1}\vy \mathbbm{1}_{B^c_{r_{1-\alpha}}} (\vy)$. Similarly, a maximal trimmed correlation can be defined by $\E\big[\langle \vY, \vT^*(\vY) \rangle\mid \| \vY \| \le r_{1-\alpha} \big]$, which can be dealt with analogously to the tail case.
The empirical estimator for $\kappa_{\alpha}$ is given by
\[
\hat{\kappa}_{n,\alpha} = \int \langle \bm{\varphi}_{\alpha}, \hat{\vT}_n \rangle \, dR.
\]
Since the test function $\bm{\varphi}_{\alpha}$ satisfies Assumption \ref{asp: text function}, we have
$
\sqrt{n}(\hat{\kappa}_{n,\alpha} - \kappa_{\alpha}) \stackrel{d}{\to} \cN(0,\sigma_{\bm{\varphi}_{\alpha}}^2)$
by Theorem \ref{thm: limit}. Assume now $\sigma_{\bm{\varphi}_{\alpha}}^2 > 0$ (cf. Lemma \ref{lem: nondegeneracy}).
Confidence intervals for $\kappa_{\alpha}$ can be constructed by using the bootstrap. Set 
\[
\hat{\kappa}_{n,\alpha}^B =\int \big\langle \bm{\varphi}_{\alpha}, \hat{\vT}_n^B \big\rangle \, dR(\vy)
\]
and $\hat{\tau}_{n,\beta}$ by the conditional $\beta$-quantile of $\hat{\kappa}_{n,\alpha}^B$. Then, Theorem \ref{prop: bootstrap consistency} above and Lemma 23.3 in \cite{vanderVaart1998asymptotic} immediately lead to the following corollary.
\begin{corollary}[Validity of bootstrap confidence interval]
\label{cor: bootstrap maximum correlation}
Under the above setting, the data-dependent interval
$
\big[ 2\hat{\kappa}_{n,\alpha} -\hat{\tau}_{n,1-\beta/2},2\hat{\kappa}_{n,\alpha} -\hat{\tau}_{n,\beta/2} \big ]
$
contains $\kappa_{\alpha}$ with probability approaching $1-\beta$.
\end{corollary}

Alternatively, one may use the bootstrap variance estimator for $\sigma^2_{\bm{\varphi}}$ to construct a normal confidence interval. 

\subsection{Numerical experiments}
\label{sec: simulations}

We present small-scale simulations to  assess the finite sample properties of the empirical $L^1$-error and maximum tail correlation.  For each case, we draw the histograms of the sampling distribution and the coverage probabilities for the bootstrap critical value. 
We used the python package \texttt{pysdot} \cite{leclerc2019pysdot} for solving the dual problem in both cases. We consider the following settings.

\begin{itemize}
\item \textbf{$\bm{L^1}$-error:} The discrete distribution $P$ is taken as the uniform distribution on 5 points chosen at random from the unit square $[0,1]^2$; the reference measure $R$ is the uniform distribution on $[0,1]^2$. To compute the coverage probabilities $\PP(\sqrt{n} \| \hat{\vT}_n-\vT^* \|_{L^1(R)} \leq \hat \tau_{n,\alpha})$ for $\alpha \in (0,1)$, for each Monte Carlo iteration, we compute the rank of $\sqrt{n} \| \hat{\vT}_n-\vT^* \|_{L^1(R)}$ w.r.t. the bootstrap distribution (i.e., $\hat F_n^B( \sqrt{n} \| \hat{\vT}_n-\vT^* \|_{L^1(R)} )$, where $\hat F_n^B$ is the bootstrap distribution function of $\sqrt{n}\| \hat{\vT}_n^B - \hat{\vT}_n\|_{L^1(R)}$), and evaluate how many ranks are less than $\alpha$ in the Monte Carlo repetitions. 

\smallskip
\item \textbf{Maximum tail correlation:} For the maximum tail correlation $\kappa_\alpha$, we choose $\alpha = 0.1$ and $P$ supported on the 4 points $(0.5,0.5),\,(0.5,-0.5),\,(-0.5,0.5)$, and $(-0.5,-0.5)$ in $\R^2$ with weights $\vp = (0.2,0.2,0.3,0.3)$. The reference measure $R$ is the uniform distribution over the unit ball $B_1$. We approximate the linear functional form \eqref{eq: kappa_def} via Monte Carlo integration with $M$  random points from $[-1,1]^2$, with $M=10000$ for the histogram and $M=1000$ for the coverage probabilities. Like in the $L^1$-error case, coverage probabilities are evaluated by first computing the rank of $\sqrt{n} (\hat{\kappa}_{n,\alpha}-\kappa_{\alpha})$ w.r.t. the bootstrap distribution for each Monte Carlo repetition, and then counting how many ranks are below $\beta$.
\end{itemize}

\begin{figure}
     \centering
     \begin{subfigure}[b]{0.47\textwidth}
         \centering
         \includegraphics[width=\textwidth]{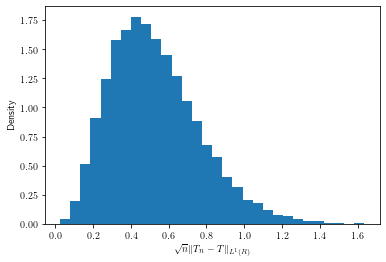}
         \caption{$L^1$-error: $n = 5000$}
         \label{fig: L1_error_hist_n5000}
     \end{subfigure}
     \hfill
     \begin{subfigure}[b]{0.47\textwidth}
         \centering
         \includegraphics[width=\textwidth]
         {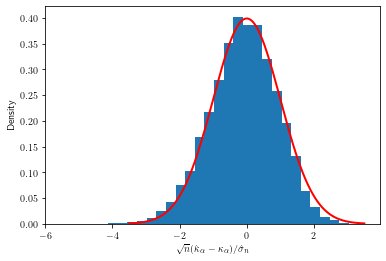}
         \caption{Maximal tail correlation: $n = 5000$}
         \label{fig: kappa_error_hist_n5000}
     \end{subfigure}
     \\
     \begin{subfigure}[b]{0.47\textwidth}
         \centering
         \includegraphics[width=\textwidth]{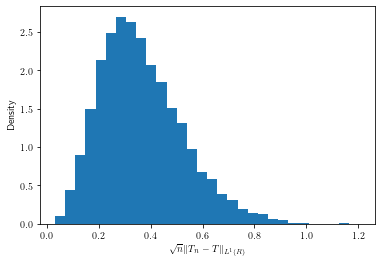}
         \caption{$L^1$-error: $n = 10000$}
         \label{fig: L1_error_hist}
     \end{subfigure}
     \hfill
     \begin{subfigure}[b]{0.47\textwidth}
         \centering
         \includegraphics[width=\textwidth]
         {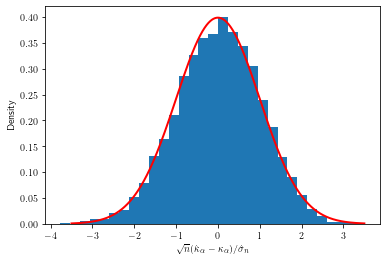}
         \caption{Maximal tail correlation: $n = 10000$}
         \label{fig: kappa_error_hist}
     \end{subfigure}
        \caption{\small Histograms for $\sqrt{n}\|\hat{\bm{T}}_n-\bm{T}^*\|_{L^1(R)}$ ((A) and (C)) and $\sqrt{n}(\hat{\kappa}_{n,\alpha}-\kappa_{\alpha})/\sigma_n$ (with $\alpha=0.1$) ((B) and (D)) based on $10000$ Monte Carlo repetitions. The sample size is $n=5000$ for (A) and (B) and $n=10000$ for (C) and (D); $\sigma_n$ is the Monte Carlo standard deviation of $\sqrt{n}(\hat{\kappa}_{n,\alpha}-\kappa_{\alpha})$. The red curves in (B) and (D) represent the $N(0,1)$ density. }
        \label{fig: histogram}
\end{figure}

Figure \ref{fig: histogram} shows the histograms of the $L^1$-error $\sqrt{n}\|\hat{\bm{T}}_n - \bm{T}^* \|_{L^1(R)}$ and the (standardized) maximum tail correlation $\sqrt{n}(\hat{\kappa}_{n,\alpha}-\kappa_{\alpha})/\sigma_n$, both with $n \in \{ 5000, 10000 \}$, based on $10000$ Monte Carlo repetitions. Here $\sigma_n$ is computed as the Monte Carlo standard deviation of $\sqrt{n}(\hat{\kappa}_{n,\alpha}-\kappa_{\alpha})$. The figure shows that: (i) the sampling distribution of $\sqrt{n}\|\hat{\bm{T}}_n - \bm{T}^* \|_{L^1(R)}$ is reasonably stable from $n=5000$ to $n=10000$, and the histograms are right-skewed, as can be expected from the form of the limit distribution from Theorem~\ref{thm: limit}; (ii) the histograms of $\sqrt{n}(\hat{\kappa}_{n,\alpha}-\kappa_{\alpha})/\sigma_n$ are close to the $N(0,1)$ density, which suggests that the normal approximation works well for the maximum tail correlation in the finite sample. 

Figure \ref{fig: PP plot} presents results concerning the coverage probabilities $\PP(\sqrt{n} \| \hat{\vT}_n-\vT^* \|_{L^1(R)} \leq \hat \tau_{n,\alpha})$ for $\alpha \in (0,1)$, and $\Prob(\sqrt{n}(\hat \kappa_{n,\alpha} - \kappa_\alpha) \leq \hat\tau_{n,\beta})$ for $\beta \in (0,1)$, both based on $250$ Monte Carlo repetitions. For each case, the sample size is $n \in \{ 5000,10000 \}$ and the number of bootstrap iterations is $500$. Here $\hat{\tau}_{n,\alpha}$ in (A) and (C) is the $\alpha$-quantile of the bootstrap distribution of $\sqrt{n}\|\hat{\bm{T}}_n-\hat{\bm{T}}_n\|_{L^1(R)}$, while $\hat{\tau}_{n,\beta}$ in (B) and (D) is the $\beta$-quantile of the bootstrap distribution of $\sqrt{n}(\hat{\kappa}_{n,\alpha}^B-\hat{\kappa}_{n,\alpha})$. The figure shows that in both cases, the coverage probabilities are close to the $45^\circ$ degree line, especially in the upper tails, which are relevant in applications, showing that the bootstrap works reasonably well to approximate the sampling distributions.

\begin{figure}
     \centering
     \begin{subfigure}[b]{0.47\textwidth}
         \centering
         \includegraphics[width=\textwidth]
         {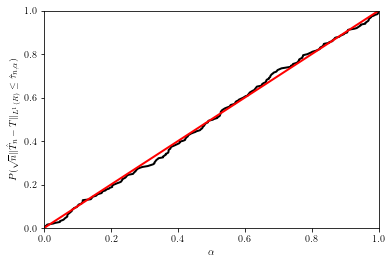}
         \caption{$L^1$-error: $n=5000$}
         \label{fig: L1_bootstrap_ppplot_n5000}
     \end{subfigure}
     \hfill
     \begin{subfigure}[b]{0.47\textwidth}
         \centering
         \includegraphics[width=\textwidth]
         {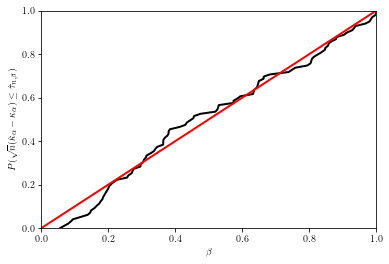}
         \caption{Maximum tail correlation: $n=5000$}
         \label{fig: kappa_bootstrap_ppplot_n5000}
     \end{subfigure}
     \\
     \begin{subfigure}[b]{0.47\textwidth}
         \centering
         \includegraphics[width=\textwidth]
         {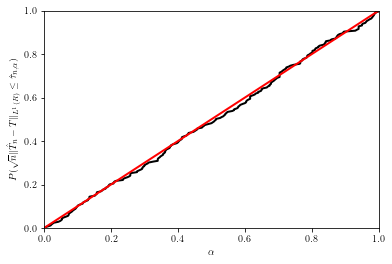}
         \caption{$L^1$-error: $n=10000$}
         \label{fig: L1_bootstrap_ppplot}
     \end{subfigure}
     \hfill
     \begin{subfigure}[b]{0.47\textwidth}
         \centering
         \includegraphics[width=\textwidth]
         {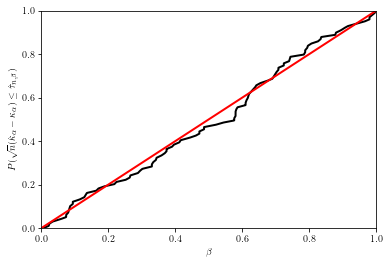}
         \caption{Maximum tail correlation: $n=10000$}
         \label{fig: kappa_bootstrap_ppplot}
     \end{subfigure}
     
        \caption{\small Coverage probabilities with bootstrap critical values based on 250 Monte Carlo repetitions: $\Prob (\sqrt{n}\|\hat{\bm{T}}_n-\bm{T}^*\|_{L^1(R)} \le \hat{\tau}_{n,\alpha})$ for $\alpha \in (0,1)$ ((A) and (C)) and $\Prob (\sqrt{n}(\hat{\kappa}_{n,\alpha}-\kappa_{\alpha})\le \hat{\tau}_{n,\beta})$ for $\beta \in (0,1)$ with $\alpha=0.1$ ((B) and (D)), where $\hat{\tau}_{n,\alpha}$ and $\hat{\tau}_{n,\beta}$ are the $\alpha$ and $\beta$-quantiles of the corresponding bootstrap distributions. The sample size is $n=5000$ for (A) and (B), and $n = 10000$ for (C) and (D); the number of bootstrap iterations is 500. The red line represents the $45^\circ$ degree line.}
        \label{fig: PP plot}
\end{figure}

\section{Weak limits and asymptotic efficiency in dual H\"older space}

\label{sec: dual holder}
\textcolor{black}{
So far, we have established limit theorems for the $L^s$-error and linear functionals of the empirical OT map. A question remains as to whether the empirical OT map has a weak limit in a function space. We first point out in Proposition \ref{prop: impossibility} below that the $L^2(R)$ norm is too strong for that purpose, meaning that the empirical OT map does not possess nontrivial weak limits in $L^2(R)$.\footnote{A similar observation was made in the recent preprint \cite{manole2023central} in the context of smooth OT map estimation for absolutely continuous distributions supported on the flat torus.}   Instead of $L^2(R)$, we shall view $\hat{\bm{T}}_n$ as elements of the topological dual of the $\alpha$-H\"{o}lder space with any $\alpha \in (0,1]$ and show that $\hat{\bm{T}}_n$ satisfies a CLT in the said Banach space. This enables us to address the question of asymptotic efficiency for estimating the OT map, for the first time, in an infinite-dimensional setting.
}

\subsection{Impossibility of nontrivial weak limits in $L^2$ space}
\textcolor{black}{
In view of Theorem \ref{thm: limit}, it would be natural to inquire whether $r_n\big(\hat{\vT}_n - \vT^*\big)$ has a distributional limit in $L^2(R;\R^d)$ for some norming sequence $r_n \to \infty$, where $L^2(R;\R^d)$ is the Hilbert space of Borel vector fields $\bm{\varphi}: \R^d \to \R^d$ whose coordinate functions are square integrable w.r.t. $R$, equipped with the inner product $\langle \cdot,\cdot \rangle_{L^2(R)}$. Indeed, Theorem \ref{thm: limit} implies that, for a given norming sequence $r_n \to \infty$, the only possible weak limit of $r_n\big(\hat{\vT}_n - \vT^*\big)$ is the point mass at $0$, which happens if and only if $r_n = o(n^{1/4})$. In other cases, $r_n\big(\hat{\vT}_n - \vT^*\big)$ does not possess a distributional limit. }

\begin{proposition}[Impossibility of nontrivial weak limits in $L^2$ space]
\label{prop: impossibility}
\textcolor{black}{Suppose Assumptions \ref{asp: cont} and \ref{asp: poincare} hold and that $L^2(R; \R^d)$ admits a complete orthonormal system $\{ \bm{\varphi}_j \}_{j=1}^\infty$ consisting of functions satisfying Assumption \ref{asp: text function}. Then, for a given norming sequence $r_n \to \infty$, if $r_n = o(n^{1/4})$, then $r_n\big(\hat{\vT}_n - \vT^*\big)$ converges to $0$ in probability in $L^2(R;\R^d)$, while if $n^{1/4} = O(r_n)$, then $r_n\big(\hat{\vT}_n - \vT^*\big)$ does not converge in distribution in $L^2(R;\R^d)$.}
\end{proposition}

\textcolor{black}{
For most common reference measures, a complete orthonormal system in $L^2(R;\R^d)$ can be constructed by applying Gram-Schmidt orthogonalization to polynomials, for which Assumption \ref{asp: text function} holds under tail conditions on the reference density. For example, the above proposition allows for the uniform distributions over the unit cube and ball, as well as the non-degenerate Gaussian distributions as reference measures. }

\subsection{Stability and CLT in dual H\"older space}
\textcolor{black}{First, we need some new notation. 
For $\alpha \in (0,1]$, recall that the $\alpha$-H\"older space $\cC^\alpha (\cY)$ is defined by the set of functions $f: \cY \to \R$ with $\| f \|_{\cC^\alpha} < \infty$, where
\[
\| f \|_{\cC^\alpha} \coloneqq \| f \|_{\infty} + \sup_{\substack{\vy \ne \vy'\\ \vy,\vy' \in \cY}} \frac{|f(\vy) - f(\vy')|}{\|\vy-\vy'\|^\alpha}. 
\]
Fix $\alpha \in (0,1]$. Consider the Banach space $\BB = \prod_{i=1}^N \cC^\alpha(\cY)  = \{ \bm \varphi = (\varphi_1,\dots,\varphi_N)^{\intercal} : \varphi_i \in \cC^{\alpha}(\cY), \forall i \in \{ 1,\dots, N \} \}$ with norm $\| \bm \varphi \|_{\BB} = \max_{1 \le i \le N} \| \varphi_i \|_{\cC^\alpha}$. Denote by $\BB_1 = \{ \bm \varphi : \| \bm \varphi \|_{\BB} \le 1\}$ the unit ball in $\BB$ and $\BB^*$ the (topological) dual of $\BB$, i.e., $\BB^*$ is the Banach space of bounded linear functionals on $\BB$ equipped with  norm $\| b^* \|_{\BB^*} = \sup_{\bm \varphi \in \BB_1} b^* (\bm\varphi)$.}

\textcolor{black}{
Observe that the OT map $\vT_{\vz}$ induces the linear functional on $\BB$ as
\[
\bm \varphi \mapsto \langle \bm \varphi, \vT_{\vz} \rangle_{L^2(R)},
\]
which is bounded on $\BB_1$. Hence, $\vT_{\vz}$ can be identified with an element of $\BB^*$. As it turns out, $\sqrt{n}(\hat \vT_n - \vT^*)$ does have a nontrivial weak limit as elements of $\BB^*$. To derive this result, we formally identify the mapping $\vz \mapsto \vT_{\vz}$ with $\Gamma: \R^N \to \BB^*$ defined as
\be\label{eq:dual_holder_functional}
\Gamma(\vz)(\bm\varphi) \coloneqq \langle \bm{\varphi},\vT_{\vz} \rangle_{L^2(R)}, \ \bm \varphi \in \BB, 
\ee
and show that the latter map is Fr\'echet differentiable. Combining the CLT for $\hat{\vz}_n$ and the extended delta method yields a CLT for $\sqrt{n} (\hat \vT_n-\vT^*) = \sqrt{n}\big(\Gamma (\hat{\vz}_n) - \Gamma (\vz^*)\big)$ in $\BB^*$.
To state the result, define $b_{i}^* \in \BB^\ast$ for $i \in \{ 1,\dots,N \}$ by 
\[
b^*_{i} : \bm\varphi \mapsto \sum_{j \ne i} \int_{D_{ij}} \frac{\langle \vx_i - \vx_j, \bm\varphi(\vy) \rangle}{\|\vx_i - \vx_j\|} \rho(\vy)\,d\cH^{d-1}(\vy).
\]
The map $\vz \mapsto \Gamma(\vz), \R^N \to \BB^*$ is continuous (indeed, locally Lipschitz; see the proof of Theorem \ref{thm: dual_holder_limit}~(i) below), so Borel measurable, which guarantees that $\hat{\vT}_n = \Gamma (\hat{\vz}_n)$ is Borel measurable as a mapping into $\BB^*$.}

\begin{theorem}[Stability and CLT in dual H\"older space]
\label{thm: dual_holder_limit}
\textcolor{black}{The following hold. 
\begin{enumerate}
    \item[(i)] Under Assumption~\ref{asp: cont}, $\Gamma: \R^N \to \BB^\ast$ is Fr\'echet differentiable at $\vz^\ast$ with derivative
    $\Gamma'_{\vz^*}(\vh) = \sum_{i=1}^N h_ib_i^*$, for $\vh = (h_1,\dots,h_N)^\intercal \in \R^N$.
    \item[(ii)] Under Assumptions~\ref{asp: cont} and \ref{asp: poincare}, we have 
$\sqrt{n} ( \hat{\vT}_n-\vT^* ) \stackrel{d}{\to} \mathbb{G}$ in $\BB^*$,
    where $\mathbb{G}$ is a centered Gaussian random variable in $\BB^\ast$ with $\mathbb{G} \stackrel{d}{=} \sum_{i = 1}^n W_i b_i^*$, for $\vW = (W_1,\dots,W_N)^{\intercal} \sim \cN(\vzero,\bm{B} \bm{A} \bm{B}^\intercal)$ given in Theorem~\ref{thm: limit}. 
\end{enumerate}
}
\end{theorem}

\textcolor{black}{
Part (i) of the theorem can be seen as a \textit{uniform-in-$\bm{\varphi}$} version of the Hadamard differentiability result for the linear functional in Theorem \ref{thm: Hadamard_diff}~(ii). In order that the derivative formula holds uniformly in $\bm{\varphi}$, we require the test functions to be (uniformly bounded and) uniformly equicontinuous. Part (ii) follows directly from the CLT for $\hat{\vz}_n$ and the extended delta method.}

\subsection{Asymptotic efficiency}
\textcolor{black}{
Next, we establish asymptotic efficiency (in the H\'ajek-Le Cam sense) of $\hat \vT_n$ in estimating $\vT^*$ when viewed as elements of $\BB^*$.
We follow the framework in Chapter 3.11 of \cite{van1996weak}. To this end, we need to specify statistical experiments $(\mathsf{X}_n,\cA_n, P_{n,\vh}: \vh \in H)$ indexed by a subspace $H$ of a Hilbert space and local parameters. Choose $H$ to be $H = \{ \vh \in \R^N : \vh^\intercal \vp = 0 \}$ equipped with the inner product $\langle \bm{a},\, \bm{b} \rangle_{\vp} \coloneqq \sum_{i=1}^N a_i b_i p_i$ for $\bm a =(a_1,\dots,a_N)^\intercal$ and $\bm b = (b_1,\dots,b_N)^\intercal$. Set $\mathsf X_n = \cX^n, \cA_n = 2^{\mathsf X_n}$, and $P_{n,\vh} = \big(\sum_{i=1}^N p_{n,\vh,i}\delta_{\vx_i}\big)^{\otimes n}$ with $p_{n,\vh,i} = (1+h_i/\sqrt{n})p_i$ for $\vh = (h_1,\dots,h_N)^\intercal \in H$. The observations $\vX_1,\dots,\vX_n$ are the coordinate projections of $\mathsf X_n$. 
By direct calculations, the log-likelihood ratio can be expanded as
\[
\log \frac{dP_{n,\vh}}{dP_{n,\vzero}} =   \underbrace{\langle \sqrt{n}(\hat \vp_n - \vp),\vh \rangle}_{=\Delta_{n,\vh}}- \frac{1}{2} \|\vh\|_{\vp}^2 + o_{P_{n,\vzero}}(1), 
\]
where $(\Delta_{n,\vh})_{\vh \in H} \stackrel{d}{\to} (\Delta_{\vh})_{\vh \in H}$ (finite dimensional convergence) under $P_{n,\vzero}$, and $(\Delta_{\vh})_{\vh \in H}$ is a centered Gaussian process with covariance function $\E[\Delta_{\vh_1}\Delta_{\vh_2}] = \vh_1^{\intercal}\bm{A}\vh_2 = \langle \vh_1,\vh_2 \rangle_{\vp}$ for $\vh_1,\vh_2 \in H$. Hence, the sequence of experiments $(\mathsf{X}_n,\cA_n, P_{n,\vh}: \vh \in H)$ is asymptotically normal in the sense of \cite[p.~412]{van1996weak}.}

\textcolor{black}{
We consider the local parameter sequence $\vT_n (\vh) = \Gamma(\vz^\ast(\vp_{n,\vh})) \in \BB^\ast$, and view $\hat \vT_n$ as an estimator for $\vT_n(\vh)$ with values in $\BB^*$, i.e., $\hat{\vT}_n = \Gamma (\hat{\vz}_n)$.
We say that the local parameter sequence $\vT_n (\vh)$ is \textit{regular} if $\sqrt{n}\big(\vT_n(\vh) - \vT_n(\vzero)\big) \to \dot \vT (\vh)$ for every $\vh \in H$ for some continuous linear map $\dot \vT: H \to \BB^*$. Furthermore, we say that a sequence of estimators $\hat{\cT}_n$ for $\vT_n(\vh)$ is \textit{regular} if the limit law of $\sqrt{n}\big(\hat{\cT}_n - \vT_n (\vh)\big)$ under $P_{n,\vh}$ exists for every $\vh \in H$ and is independent of $\vh$.}

\begin{proposition}[Asymptotic efficiency of empirical OT map]
\label{prop: asymptotic efficiency}
\textcolor{black}{
Consider the above setting, and let Assumptions~\ref{asp: cont} and \ref{asp: poincare} hold. Let $\mathbb{G}$ be a centered Gaussian random variable in $\BB^*$ given in Theorem \ref{thm: dual_holder_limit}. Then the following hold.
\begin{enumerate}
\item[(i)] (Convolution) The sequences of parameters $\vT_n(\vh)$ and estimators $\hat \vT_n$ are regular. For every regular sequence of  estimators $\hat\cT_n$, the limit law of $\sqrt{n}\big(\hat\cT_n-\vT_n(\bm 0)\big)$ under $P_{n,\bm 0}$ equals the distribution of the sum  $\mathbb{G}+\mathbb{W}$ for some $\mathbb{B}^\ast$-valued random variable $\mathbb{W}$ independent of $\mathbb{G}$.
\item[(ii)] (Local asymptotic minimaxity) For every sequence of (Borel measurable) estimators $\hat\cT_n$ and every $k \in \NN$,
\[
\sup_{I \subset H: \text{finite}} \liminf_{n \to \infty}\sup_{\vh \in I}\E_{\vh}\Big [ \Big \| \sqrt{n}(\hat\cT_n-\vT_n(\vh)) \Big\|_{\BB^*}^k \Big] \ge \E\big[ \| \mathbb{G} \|_{\BB^*}^k \big],
\]
where $\E_{\vh}$ denotes the expectation under $P_{n,\vh}$. Furthermore, for every $I \subset H$ finite, we have
\[
 \lim_{n \to \infty} \sup_{\vh \in I} \EE_{\vh} \left [ \Big \| \sqrt{n}(\hat\vT_n-\vT_n(\vh)) \Big\|_{\BB^*}^k\right ] = \E\big[ \| \mathbb{G} \|_{\BB^*}^k \big].
\]
\end{enumerate}
}
\end{proposition}

\textcolor{black}{
Part (i) of the theorem shows that the limit law of $\hat{\bm{T}}_n$ is the most concentrated around the origin among the regular estimators for $\bm{T}_n(\vh)$.
Part (ii) shows that the local asymptotic minimax lower bound for the loss $\| \cdot \|_{\BB^*}^k$ agrees with $\E[\| \GG \|_{\BB^*}^k]$, and the empirical OT map attains this lower bound, establishing its local asymptotic minimaxity against the said loss. The second claim of Part (ii) requires to establish moment convergence for the empirical OT map under local alternatives $P_{n,\bm{h}}$, which requires additional work.
}

\section{Proofs}
\label{sec: proofs}

\subsection{Proof of Theorem \ref{thm: Hadamard_diff}}
We separately prove Parts (i) and (ii). 

\medskip 

6.1.1.~\textit{Proof of Theorem \ref{thm: Hadamard_diff} (i)}. \
We first note that
\[
\delta_s(\vz_1, \vz_2) = \sum_{1\le i \neq j \le N} \|\vx_i - \vx_j\|^s R\big(C_i(\vz_1) \cap C_j(\vz_2)\big). 
\]
For $\vh_1,\vh_2 \in \R^N$ and $t > 0$, we have 
\[
C_j(\vz^* + t\vh_2) = \bigcap_{k\neq j} \big\{\vy: \langle \vx_j-\vx_k,\vy \rangle - b_{jk} \ge t (h_{2,k} -  h_{2,j})\big\},
\]
so  $C_i(\vz^* + t\vh_1) \cap C_j(\vz^* + t\vh_2) \subset D_{ij}(\vh_1,\vh_2,t) $, where
\[
\begin{split}
D_{ij}(\vh_1,\vh_2,t) &\coloneqq C_i(\vz^*+t\vh_1) \cap \big\{\vy: \langle \vx_i-\vx_j,\vy \rangle - b_{ij} \le t (h_{2,j} - t h_{2,i})\big\} \\
&\mspace{3mu}=\Bigg (\bigcap_{k \ne i,j} \big\{ \vy : \langle \vx_i-\vx_k,\vy \rangle - b_{ik} \ge t(h_{1,k} - h_{1,i}) \big\} \Bigg) \\
&\qquad \cap \big \{\vy : t(h_{1,j} - h_{1,i}) \leq  \langle \vx_i-\vx_j,\vy \rangle - b_{ij} \leq t(h_{2,j} - h_{2,i}) \big\}.
\end{split}
\]
Here we have used $b_{ji} = -b_{ij}$. We divide the rest of the proof into three steps. 

\medskip

\underline{Step 1}. Observe that $D_{ij}(\vh_1,\vh_2,t) \setminus \big(C_i(\vz^* + t\vh_1) \cap C_j(\vz^* + t\vh_2)\big) 
= D_{ij}(\vh_1,\vh_2,t) \setminus C_j(\vz^* + t\vh_2)\big)$. First, we shall show that for every $i,j \in \{ 1,\dots, N \}
$ with $i \ne j$  and every $\vh_1,\vh_2 \in \R^N$, 
\[
R\big(D_{ij}(\vh_1,\vh_2,t) \setminus   C_j(\vz^* + t\vh_2)  \big) =O(t^2), \ t \downarrow 0. 
\]
For every $\vy \in  D_{ij}(\vh_1,\vh_2,t) \setminus   C_j(\vz^* + t\vh_2)$, there exists $k \in \{ 1,\dots, N \} \setminus \{ i,j \}$ such that $\langle \vx_j - \vx_k, \vy \rangle - b_{jk} < t(h_{2,k} - h_{2,j})$. For such $k$, we have
\[
\begin{split}
\langle \vx_i - \vx_k,\vy \rangle -b_{ik} &= \langle \vx_i - \vx_j,\vy \rangle - b_{ij} + \langle \vx_j - \vx_k,\vy \rangle - b_{jk} \\
&< t(h_{2,j}-h_{2,i} + h_{2,k} - h_{2,j}) = t(h_{2,k}-h_{2,i}),
\end{split}
\]
where we have used the fact that $b_{ik}=b_{ij} + b_{jk}$. 
Since $\vy \in D_{ij}(\vh_1,\vh_2,t)$, we have $\langle \vx_i - \vx_k,\vy \rangle -b_{ik} \ge t(h_{1,k}-h_{1,i})$, and thus
\[
t(h_{1,k}-h_{1,i}) \le \langle \vx_i - \vx_k,\vy \rangle -b_{ik} <t(h_{2,k}-h_{2,i}). 
\]
Conclude that
\[
    D_{ij}(\vh_1,\vh_2,t) \setminus   C_j(\vz^* + t\vh_2) 
    \subset \bigcup_{k \neq i,j} A_{ijk}(t), 
\]
where
\[
\begin{split}
    A_{ijk}(t) \coloneqq &\big\{\vy: t(h_{1,j} - h_{1,i}) \leq  \langle \vx_i-\vx_j,\vy \rangle - b_{ij} \leq t(h_{2,j} - h_{2,i}) \big \}  \\&\quad \cap \big\{\vy: t(h_{1,k} - h_{1,i}) \leq  \langle \vx_i-\vx_k,\vy \rangle - b_{ik} \leq t(h_{2,k} - h_{2,i}) \big \}.
\end{split}
\]

Pick any $k \ne i,j$. 
If $\vx_i - \vx_j$ and $\vx_i - \vx_k$ are linearly independent, then we have
$R(A_{ijk}(t)) = O(t^2)$, which follows from the next lemma. 

\begin{lemma}
\label{lem: higher-order term}
Let $\bm{\alpha}_1,\bm{\alpha}_2 \in \R^d$ be linearly independent unit vectors with $\Delta\coloneqq \langle \bm{\alpha}_1,\bm{\alpha}_2 \rangle$, and let $b_i \in \R$ and $\underline{t}_i < \overline{t}_i$ for $i=1,2$.  Consider the affine subspace $H = \bigcap_{i=1}^2 \{ \vy : \langle \bm{\alpha}_i,\vy \rangle = b_i \}$ of dimension $d-2$. Then, we have 
\[
\bigcap_{i=1}^2 \big \{ \vy : \underline{t}_i \le \langle \bm{\alpha}_i,\vy \rangle - b_i \le \overline{t}_i \big \} \subset H^\delta \coloneqq \{ \vy : \dist (\vy,H) \le \delta \},
\]
where $\delta =2(1-\Delta^2)^{-1}\sum_{i=1}^2 \big ( |\overline{t}_i| + | \underline{t}_i| \big)$. Furthermore, under Assumption \ref{asp: cont}, we have $R\big(H^{\delta}\big) = O(\delta^2)$ as $\delta \downarrow 0$. 
\end{lemma}

The proof of this lemma will appear after the proof of Theorem \ref{thm: Hadamard_diff} (i).

Conversely, suppose that $\vx_i - \vx_j$ and $\vx_i - \vx_k$ are linearly dependent, i.e., $\vx_i - \vx_k = c (\vx_i-\vx_j)$ for some $c\neq 0$. Set $L_1 = \{\vy : \langle \vx_i - \vx_j, \vy \rangle = b_{ij} \}$ and $L_2 = \{\vy: \langle \vx_i - \vx_k, \vy \rangle =b_{ik}\} = \{ \vy : \langle \vx_i-\vx_j,\vy \rangle = c^{-1}b_{ik}  \}$. For every $\vy \in A_{ijk}(t)$,
\[
\dist(\vy,L_1) \vee \dist(\vy,L_2) \leq \frac{t (| h_{1,j} - h_{1,i}| \vee |h_{2,j} - h_{2,i} |\vee| h_{1,k} - h_{1,i}| \vee |h_{2,k} - h_{2,i} |)}{\|\vx_i - \vx_j\|}.
\]
Furthermore,  since $L_1$ and $L_2$ are parallel, we have
\[
\dist(L_1, L_2) \coloneqq \max \Big \{ \sup_{\vy' \in L_2} \dist (\vy',L_1), \sup_{\vy' \in L_1} \dist (\vy',L_2) \Big \} = \frac{|b_{ij} - c^{-1}b_{ik}|}{\|\vx_i - \vx_j\|}.
\]
Pick any $t_\ell \downarrow 0$ as $\ell \to \infty$. Suppose that $A_{ijk}(t_\ell) \ne \varnothing$ for infinitely many $\ell$. Then, we  have $\dist(L_1, L_2) = 0$, i.e., $b_{ik} = cb_{ij}$. In what follows, we separately consider the cases when $c < 0$, $c \in (0,1)$, or $c > 1$. 

\begin{itemize}
    \item When $c < 0$: Observe that 
\[
\begin{split}
C_i (\vz^*) &\subset \big\{\vy: \langle \vx_i - \vx_j, \vy \rangle \ge b_{ij} \big\} \cap \big\{ \vy: \langle \vx_i - \vx_k, \vy \rangle \ge b_{ik} \big\} \\
&=\big\{\vy: \langle \vx_i - \vx_j, \vy \rangle \ge b_{ij} \big\} \cap \big\{ \vy: c \langle \vx_i - \vx_j, \vy \rangle \ge c b_{ij} \big\} \\
&=\big\{\vy: \langle \vx_i - \vx_j, \vy \rangle \ge b_{ij} \big\} \cap \big\{ \vy: \langle \vx_i - \vx_j, \vy \rangle \le b_{ij} \big\} \quad (\text{because $c<0$}) \\
&=\big \{\vy: \langle \vx_i - \vx_j, \vy \rangle = b_{ij}\big\},
\end{split}
\]
which entails $R\big(C_i(\vz^*)\big) = 0$. But this contradicts the fact that $R\big(C_i(\vz^*)\big) = p_i > 0$.
  \item When $c \in (0,1)$: Since $b_{ki} = -cb_{ij}$, $b_{kj} = b_{ki} + b_{ij}  = (1-c) b_{ij}$, and $\vx_k-\vx_j = (1-c)(\vx_i-\vx_j)$, we have 
\[
\begin{split}
C_k (\vz^*) &\subset \big\{\vy: \langle \vx_k - \vx_i, \vy \rangle \ge b_{ki} \big\} \cap \big\{ \vy: \langle \vx_k - \vx_j, \vy \rangle \ge b_{kj} \big\} \\
&=\big\{\vy: -c\langle \vx_i - \vx_j, \vy \rangle \ge -cb_{ij} \big\} \cap \big\{ \vy: (1-c) \langle \vx_i - \vx_j, \vy \rangle \ge (1-c) b_{ij} \big\} \\
&=\big\{\vy: \langle \vx_i - \vx_j, \vy \rangle \le b_{ij} \big\} \cap \big\{ \vy: \langle \vx_i - \vx_j, \vy \rangle \ge b_{ij} \big\} \quad (\text{because $c \in (0,1)$}) \\
&=\big \{\vy: \langle \vx_i - \vx_j, \vy \rangle = b_{ij}\big\},
\end{split}
\]
which entails $R\big(C_k(\vz^*)\big) = 0$, a contradiction.
\item When $c > 1$: Since $b_{jk} = (c-1) b_{ij}$ and $\vx_j-\vx_k = (c-1)(\vx_i-\vx_j)$, we have
\[
\begin{split}
C_j (\vz^*) &\subset \big\{\vy: \langle \vx_j - \vx_i, \vy \rangle \ge b_{ji} \big\} \cap \big\{ \vy: \langle \vx_j - \vx_k, \vy \rangle \ge b_{jk} \big\} \\
&=\big\{\vy: \langle \vx_i - \vx_j, \vy \rangle \le b_{ij} \big\} \cap \big\{ \vy: (c-1) \langle \vx_i - \vx_j, \vy \rangle \ge (c-1) b_{ij} \big\} \\
&=\big\{\vy: \langle \vx_i - \vx_j, \vy \rangle \le b_{ij} \big\} \cap \big\{ \vy: \langle \vx_i - \vx_j, \vy \rangle \ge b_{ij} \big\} \quad (\text{because $c > 1$}) \\
&=\big \{\vy: \langle \vx_i - \vx_j, \vy \rangle = b_{ij}\big\},
\end{split}
\]
which entails $R\big(C_j(\vz^*)\big) = 0$, once more, a contradiction.
\end{itemize}

\medskip
Therefore, in all cases, we have $A_{ijk}(t) = \varnothing$ for all sufficiently small $t > 0$. Finally, summing $R\big(A_{ijk}(t)\big)$ over $k \neq i, j$, we obtain
\[
R\big(D_{ij}(\vh_1,\vh_2,t) \setminus   C_j(\vz^* + t\vh_2)  \big) =O(t^2),
\]
as desired.

\medskip

\underline{Step 2}. 
Next, we shall evaluate the probability $R(D_{ij}(\vh_1,\vh_2,t))$ as $t \downarrow 0$, which is given in the following lemma. 

\begin{lemma}
\label{lem: thin mass}
Under Assumption \ref{asp: cont}, for every $i,j \in \{ 1,\dots,N \}$ with $i \ne j$ and every $\vh_1,\vh_2 \in \R^N$, we have as $t \downarrow 0$,
\begin{equation}
R \big ( D_{ij}(\vh_1,\vh_2,t) \big) + R\big( D_{ji}(\vh_1,\vh_2,t) \big) = R^{+}(D_{ij})  \frac{t | h_{2,j}-h_{2,i}-h_{1,j}+h_{1,i} |}{\|\vx_i - \vx_j\|} + o(t).
\label{eq: surface measure}
\end{equation}
\end{lemma}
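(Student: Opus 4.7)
The plan is to exploit the orthogonal decomposition of $\R^d$ along the hyperplane $H_{ij}:=\{\vy:\langle \vx_i-\vx_j,\vy\rangle = b_{ij}\}$ containing $D_{ij}$, with unit normal $\vn := (\vx_i-\vx_j)/\|\vx_i-\vx_j\|$. Set $\Delta := h_{2,j}-h_{2,i}-h_{1,j}+h_{1,i}$. Inspecting the slab bounds in the definitions shows that, when $\Delta \ge 0$, the set $D_{ij}(\vh_1,\vh_2,t)$ is a slab of thickness $t\Delta/\|\vx_i-\vx_j\|$ around a translate of $H_{ij}$ while $D_{ji}(\vh_1,\vh_2,t)\subset H_{ij}$ and is therefore $R$-null by absolute continuity; the roles swap when $\Delta\le 0$. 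By symmetry it suffices to treat $\Delta\ge 0$ and analyze $R(D_{ij}(\vh_1,\vh_2,t))$, the factor $|\Delta|$ in \eqref{eq: surface measure} arising from the dichotomy.

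Writing $\vy=\vy_0+s\vn$ with $\vy_0\in H_{ij}$ and $s\in\R$, one has $\langle \vx_i-\vx_j,\vy\rangle-b_{ij} = \|\vx_i-\vx_j\|\,s$, so the slab condition reduces to $s\in[s_1(t),s_2(t)]$ with $s_2(t)-s_1(t) = t\Delta/\|\vx_i-\vx_j\|$, and each remaining defining inequality $\langle \vx_i-\vx_k,\vy\rangle-b_{ik}\ge t(h_{1,k}-h_{1,i})$ becomes a constraint on $\vy_0\in H_{ij}$ that differs from the corresponding defining inequality of $D_{ij} = C_i\cap H_{ij}$ by $O(t)$. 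By Fubini,
\[
R(D_{ij}(\vh_1,\vh_2,t)) = \int_{s_1(t)}^{s_2(t)}\int_{S_t(s)}\rho(\vy_0+s\vn)\,d\cH^{d-1}(\vy_0)\,ds,
\]
where the slice $S_t(s)\subset H_{ij}$ is sandwiched between an $O(t)$-shrunken $D_{ij}^{-,t}$ and an $O(t)$-expanded $D_{ij}^{+,t}$ obtained by shifting the defining inequalities of $D_{ij}$ inside $H_{ij}$ by $O(t)$ (degenerate cases with $\vx_i-\vx_k$ parallel to $\vx_i-\vx_j$ being inactive for small $t$ by the analysis of Step~1).

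Since $D_{ij}$ is a bounded polyhedron in $H_{ij}$, the symmetric difference $D_{ij}^{+,t}\setminus D_{ij}^{-,t}$ lies in an $O(t)$-tubular neighborhood of $\partial D_{ij}$ in $H_{ij}\cap\cY^{t_0}$, whose $\cH^{d-1}$-measure is $O(t)$. Using $\rho\le\|\rho\|_\infty$, this replaces the inner integral by $\int_{D_{ij}}\rho(\vy_0+s\vn)\,d\cH^{d-1}(\vy_0)$ up to an additive $O(t)$ error, which contributes $O(t^2)=o(t)$ after integration over the $s$-interval of length $t\Delta/\|\vx_i-\vx_j\|$. Continuity of $\rho$ on $\cY$, the integrable dominator furnished by Lemma~\ref{lem: DCT}, and the structural condition on $\cY$ in Assumption~\ref{asp: cont} (guaranteeing $\cH^{d-1}(D_{ij}\cap\partial\cY)=0$) then permit dominated convergence to yield $\int_{D_{ij}}\rho(\vy_0+s\vn)\,d\cH^{d-1}(\vy_0)\to R^+(D_{ij})$ as $s\to 0$; multiplying by the $s$-interval length produces \eqref{eq: surface measure}.

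The main technical obstacle is controlling boundary effects at $\partial\cY$: if $D_{ij}$ had positive $\cH^{d-1}$-measure overlap with $\partial\cY$, the discontinuity of $\rho$ there would prevent convergence to $R^+(D_{ij})$. In the hyperplane-transverse case of Assumption~\ref{asp: cont} this is immediate, while in the polyhedral case one uses convexity of $\cY$ together with $R(C_i),R(C_j)>0$ to exclude $H_{ij}$ from coinciding with any $(d-1)$-face of $\cY$, so that $D_{ij}\cap\partial\cY$ is contained in a union of lower-dimensional faces and is $\cH^{d-1}$-negligible.
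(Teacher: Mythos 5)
Your proof is correct and follows essentially the same route as the paper: reduce to the case $\Delta\neq 0$, fiber along the unit normal $\vn$ to $H_{ij}$, show the slice $S_t(s)$ converges to $D_{ij}$ in $\cH^{d-1}$-measure at rate $O(t)$, then pass to the limit with dominated convergence after ruling out $\cH^{d-1}(D_{ij}\cap\partial\cY)>0$. The only difference is in how the $O(t^2)$ error is packaged: the paper replaces $D_{ij}(\vh_1,\vh_2,t)$ by the prism $\tilde D_{ij}(\vh_1,\vh_2,t)$ and bounds the symmetric difference by $d$-dimensional slab intersections $\tilde A_{ijk}$, $\tilde{\tilde A}_{ijk}$ whose $R$-measure is $O(t^2)$ via Lemma~\ref{lem: DCT}, whereas you bound $\cH^{d-1}\bigl(S_t(s)\,\Delta\, D_{ij}\bigr)$ directly in the hyperplane. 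One small imprecision: the set $S_t(s)\,\Delta\, D_{ij}$ is not in general contained in an $O(t)$-tubular neighborhood of $\partial D_{ij}$ (a shifted redundant constraint, whose bounding hyperplane sits at positive distance from $D_{ij}$, is not captured by such a tube); what is true, and what your estimate actually needs, is that $S_t(s)\,\Delta\, D_{ij}$ is covered by the finitely many slabs $\{|\langle\vx_i-\vx_k,\vy_0\rangle-b_{ik}|\le Ct\}\cap H_{ij}\cap\cY^{t_0}$, each of $\cH^{d-1}$-measure $O(t)$ — the tubular picture is only accurate after discarding redundant constraints. With that rephrasing the argument closes cleanly, and your handling of the parallel/degenerate $\vx_i-\vx_k$ directions by deferring to Step~1 of the theorem's proof is exactly what the paper does.
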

The proof of Lemma \ref{lem: thin mass} is lengthy and  deferred to after the proof of Theorem \ref{thm: Hadamard_diff} (i).

\medskip

\underline{Step 3}. By Steps 1 and 2, we have
\begin{align*}
    \delta_s(\vz^* + t\vh_1,\vz^* + t\vh_2) &= \sum_{1\leq i \neq j \leq N} \|\vx_i - \vx_j\|^s R \big ( C_i(\vz^* + t\vh_1) \cap C_j(\vz^* + t\vh_2) \big )\\
    &= \sum_{1\leq i \ne j \leq N} \|\vx_i - \vx_j\|^s R \big ( D_{ij}(\vh_1,\vh_2, t) \big) + o(t) \\
    &=t \underbrace{\sum_{1\leq i < j \leq N} \|\vx_i - \vx_j\|^{s-1} R^{+}(D_{ij}) | h_{2,j}-h_{2,i} - h_{1,j} + h_{1,i} |}_{=: [\delta_s]'_{(\vz^*,\vz^*)}(\vh_1,\vh_2)} + o(t).
\end{align*}
Thus, we have derived the directional  Gateaux differentiability for $\delta$, i.e., 
\[
\lim_{t \downarrow 0}\frac{\delta_s(\vz^* + t\vh_1,\vz^* + t\vh_2)}{t} = [\delta_s]'_{(\vz^*,\vz^*)}(\vh_1,\vh_2).
\]
To lift the Gateaux differentiability to the Hadamard one, it suffices to verify that $\delta_s$ is locally Lipschitz \cite{shapiro1990concepts}. To this end, observe that for $\vY \sim R$,
\begin{equation}
\begin{split}
    &|\delta_s(\vz_1,\vz_2) - \delta_s(\vz'_1,\vz'_2)| \\
    &= \left| \EE \left [ \|\vT_{\vz_1}(\vY) - \vT_{\vz_2}(\vY)\|^s - \|\vT_{\vz'_1}(\vY) - \vT_{\vz'_2}(\vY)\|^s \right] \right |\\
    &\leq s \max_{1 \leq i < j \leq N}\|\vx_i -\vx_j\|^{s-1} \EE\left [ \|\vT_{\vz_1}(\vY) - \vT_{\vz'_1}(\vY)\| + \|\vT_{\vz'_2}(\vY) - \vT_{\vz_2}(\vY)\| \right ],
    \end{split}
    \label{eq: delta Lip}
\end{equation}
where we used the inequality $|a^s-b^s| \le s(a \vee b)^{s-1}|a-b|$ for $a,b \ge 0$ combined with the fact that $\| \vT_{\vz} (\vy) - \vT_{\vz'}(\vy) \| \le \max_{1 \leq i < j \leq N}\|\vx_i -\vx_j\|$.
For $\vz,\vz' \in \R^N$, we have
\begin{equation}
\begin{split}
    \EE\big[\|\vT_{\vz}(\vY) - \vT_{\vz'}(\vY)\|\big] &= \sum_{1 \le i \ne j \le N} \|\vx_i - \vx_j\| R\big(C_i(\vz) \cap C_j(\vz')\big)\\
    &\leq \max_{1 \leq i < j \leq N} \|\vx_i - \vx_j\| \sum_{i=1}^N  R\big(C_i(\vz) \setminus C_i(\vz')\big),
    \label{eq: delta Lip2}
    \end{split}
\end{equation}
where the second inequality follows because $\{C_i(\vz')\}_{i=1}^N$ forms a partition of $\R^d$ up to Lebesgue negligible sets. 

Combining (\ref{eq: delta Lip}) and (\ref{eq: delta Lip2}), we have 
\begin{equation}
\begin{split}
 &|\delta_s(\vz_1,\vz_2) - \delta_s(\vz'_1,\vz'_2)| \\
 &\leq  s\max_{1 \leq i < j \leq N} \|\vx_i - \vx_j\|^s \sum_{i=1}^N \big \{ R\big(C_i({\vz_1}) \setminus C_i({\vz_1}')\big) + R\big(C_i({\vz_2}) \setminus C_i({\vz_2}')\big) \big \}.
 \end{split}
 \label{eq: delta_difference_bound}
\end{equation}
Hence, local Lipschitz continuity of $\delta_s$ follows from the next lemma, whose proof is postponed to after the proof of Theorem \ref{thm: Hadamard_diff} (i). 
\begin{lemma}
\label{lem: quant_stability}
Pick any $i \in \{ 1,\dots, N \}$. 
Under Assumption \ref{asp: cont}, for sufficiently small $U > 0$, there exists a constant $\ell_U$ such that $R\big(C_i(\vz) \setminus C_i(\vz')\big) \le \ell_U \| \vz-\vz' \|$ whenever $\|\vz-\vz^*\| \vee \| \vz'-\vz^*\| \le U$.
\end{lemma}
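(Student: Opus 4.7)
The plan is to reduce the symmetric difference of two Laguerre cells to a finite union of thin slabs between parallel hyperplanes, and then bound the $R$-mass of each slab linearly in $\|\vz - \vz'\|$ via Fubini.

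First, I would unfold the definitions. If $\vy \in C_i(\vz) \setminus C_i(\vz')$, then $\langle \vx_i - \vx_j, \vy \rangle \geq b_{ij}(\vz)$ for every $j \neq i$, yet at least one constraint is violated under $\vz'$, so there exists $j \neq i$ with $\langle \vx_i - \vx_j, \vy \rangle < b_{ij}(\vz')$. Consequently $\vy$ lies in the slab
\[
S_{ij}(\vz,\vz') := \big\{\vy \in \R^d : b_{ij}(\vz) \wedge b_{ij}(\vz') \leq \langle \vx_i - \vx_j, \vy \rangle \leq b_{ij}(\vz) \vee b_{ij}(\vz')\big\},
\]
sandwiched between two parallel hyperplanes at normal distance $|b_{ij}(\vz) - b_{ij}(\vz')|/\|\vx_i - \vx_j\|$. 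Since $b_{ij}(\vz) - b_{ij}(\vz') = -(z_i - z_i') + (z_j - z_j')$, the triangle inequality yields $|b_{ij}(\vz) - b_{ij}(\vz')| \leq \sqrt{2}\,\|\vz - \vz'\|$, and we obtain the containment $C_i(\vz) \setminus C_i(\vz') \subset \bigcup_{j \neq i} S_{ij}(\vz,\vz')$.

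Next, I would estimate $R\big(S_{ij}(\vz,\vz')\big)$ by Fubini in coordinates aligned with $\vx_i - \vx_j$ (equivalently, the coarea formula applied to the affine function $\vy \mapsto \langle \vx_i - \vx_j, \vy \rangle$, whose gradient has constant norm $\|\vx_i - \vx_j\|$). With $H_t^{ij} := \{\vy : \langle \vx_i - \vx_j, \vy \rangle = t\}$ this gives
\[
R\big(S_{ij}(\vz,\vz')\big) = \frac{1}{\|\vx_i - \vx_j\|} \int_{b_{ij}(\vz) \wedge b_{ij}(\vz')}^{b_{ij}(\vz) \vee b_{ij}(\vz')} \left(\int_{H_t^{ij}} \rho\, d\cH^{d-1}\right) dt.
\]
Because $\rho$ is continuous on the compact $\cY$ and vanishes off $\cY$, we have $\|\rho\|_\infty < \infty$; moreover, $\cY$ sits in some closed ball, so every hyperplane section $H_t^{ij} \cap \cY$ has $\cH^{d-1}$-measure bounded by a constant $C_{\cY}$ independent of $t$ and $(i,j)$. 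The inner integral is therefore at most $\|\rho\|_\infty C_{\cY}$ uniformly in $t$, and
\[
R\big(S_{ij}(\vz,\vz')\big) \leq \frac{\|\rho\|_\infty C_{\cY}}{\|\vx_i - \vx_j\|}\, |b_{ij}(\vz) - b_{ij}(\vz')| \leq \frac{\sqrt{2}\,\|\rho\|_\infty C_{\cY}}{\|\vx_i - \vx_j\|}\, \|\vz - \vz'\|.
\]

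Summing over $j \neq i$ would yield $R\big(C_i(\vz) \setminus C_i(\vz')\big) \leq \ell\,\|\vz - \vz'\|$ with $\ell := \sqrt{2}\,\|\rho\|_\infty C_{\cY} \sum_{j \neq i}\|\vx_i - \vx_j\|^{-1}$. Notably, this constant does not depend on $U$, so the lemma would actually hold globally on $\R^N \times \R^N$; the restriction $\|\vz\| \vee \|\vz'\| \leq U$ in the statement is a convenience rather than a necessity. The only mildly delicate step is producing the uniform-in-$t$ bound on $\int_{H_t^{ij}} \rho\, d\cH^{d-1}$, which is immediate once one observes that a cross-section of a compact set by a hyperplane lies in a $(d-1)$-disk of bounded radius; no deeper regularity of $\cY$ (such as the polyhedrality or $\cH^{d-1}$-boundary condition in Assumption \ref{asp: cont}) is invoked in this particular argument.
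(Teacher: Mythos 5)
Your proof is correct and follows essentially the same route as the paper's: decompose $C_i(\vz)\setminus C_i(\vz')$ into slabs $S_{ij}$ between parallel hyperplanes, then bound $R(S_{ij})$ linearly in $|b_{ij}(\vz)-b_{ij}(\vz')|$ via Fubini. The one place you genuinely improve on the paper's execution is the treatment of the inner hyperplane integral. The paper parameterizes the slab by a fixed hyperplane $H$ and a perpendicular translation $t\vv$, then controls $\int_H \rho(\vy+t\vv)\,d\cH^{d-1}(\vy)$ by containing the relevant section of $\cY$ inside $H\cap\cY^{t_0}$ for $t_0 = 2U/\varepsilon_0$, which ties the constant $\ell_U$ to $U$. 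You instead use the moving hyperplane $H_t^{ij}$ directly (equivalently, the coarea formula), and observe that every hyperplane section of the bounded set $\cY$ lies in a $(d-1)$-disk of uniformly bounded radius, so $\int_{H_t^{ij}}\rho\,d\cH^{d-1}\le\|\rho\|_\infty C_\cY$ with $C_\cY$ independent of $t$, $(i,j)$, and $U$. This yields a Lipschitz constant that is global rather than local, which is a minor but real sharpening; you are also right that, in both your argument and the paper's, only compactness of $\cY$ and boundedness of $\rho$ are used here, not the polyhedral/$\cH^{d-1}$ boundary condition in Assumption~\ref{asp: cont} (that condition is needed elsewhere, in Lemma~\ref{lem: thin mass}). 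One small remark: the triangle inequality already gives $|b_{ij}(\vz)-b_{ij}(\vz')|\le|z_i-z_i'|+|z_j-z_j'|\le 2\|\vz-\vz'\|_\infty\le 2\|\vz-\vz'\|$; your extra $\sqrt{2}$ via Cauchy--Schwarz tightens this but is not needed for the linear-in-$\|\vz-\vz'\|$ conclusion.
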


Now, for every $\vh_1^t \to \vh_1 \in \R^N$ and $\vh_2^t \to \vh_2 \in \R^N$ as $t \downarrow 0$, we have
\begin{align*}
&\lim_{t \downarrow 0} \frac{\delta_s(\vz^* + t\vh_1^t,\vz^* + t\vh_2^t) - \delta_s(\vz^*,\vz^*)}{t} - [\delta_s]'_{(\vz^*,\vz^*)}(\vh_1,\vh_2)  \\
&= \lim_{t \downarrow 0} \frac{\delta_s(\vz^* + t\vh_1,\vz^* + t\vh_2) - \delta_s(\vz^*,\vz^*)}{t} - [\delta_s]'_{(\vz^*,\vz^*)}(\vh_1,\vh_2) \\
&\qquad   + \lim_{t \downarrow 0}\frac{\delta_s(\vz^* + t\vh_1^t,\vz^* + t\vh_2^t) - \delta_s(\vz^*+t\vh_1,\vz^*+t\vh_2)}{t}\\
& = 0+\lim_{t \downarrow 0} O(\|\vh_1^t - \vh_1\| + \|\vh_2^t - \vh_2\|) = 0.
\end{align*}
This completes the proof.
\qed

\medskip

We are left to prove Lemmas \ref{lem: higher-order term}--\ref{lem: quant_stability}.

\begin{proof}[Proof of Lemma \ref{lem: higher-order term}]
Assume $\Delta \ge 0$. The $\Delta \le 0$ case is similar. For $\vy$ with $\underline{t}_i \le \langle \bm{\alpha}_i,\vy \rangle - b_i \le \overline{t}_i$ for $i=1,2$, one can find $\vy_0 \in H$ and $\tau_1,\tau_2 \in \R$ such that $\vy = \vy_0 + \sum_{i=1}^2 \tau_i \bm{\alpha}_i$. The scalars $\tau_1,\tau_2$ must satisfy $\underline{t}_1 \le \tau_1 + \tau_2 \Delta \le \overline{t}_1$ and $\underline{t}_2 \le \Delta \tau_1 + \tau_2  \le \overline{t}_2$. Solving these linear inequalities, we have
\[
\frac{\underline{t}_1 - \Delta \overline{t}_2}{1-\Delta^2} \le \tau_1 \le \frac{\overline{t}_1 - \Delta \underline{t}_2}{1-\Delta^2} \quad \text{and} \quad \frac{\underline{t}_2 - \Delta \overline{t}_1}{1-\Delta^2} \le \tau_2 \le \frac{\overline{t}_2 - \Delta \underline{t}_1}{1-\Delta^2},
\]
which implies that 
\[
|\tau_1| \vee | \tau_2| \le \frac{1}{1-\Delta^2} \sum_{i=1}^2 \big ( |\overline{t}_i| + | \underline{t}_i| \big)
\]
Note that $\Delta^2 < 1$ by linear independence between $\bm{\alpha}_1$ and $\bm{\alpha}_2$. Now,
$
\dist (\vy,H) \le \big \| \sum_{i=1}^2 \tau_i \bm{\alpha}_i \| \le \sum_{i=1}^2 | \tau_i | \le 2(1-\Delta^2)^{-1}\sum_{i=1}^2 \big ( |\overline{t}_i| + | \underline{t}_i| \big) = \delta$. 

The second claim follows from a change-of-variable argument. Fix arbitrary $\vy_0 \in H$. Let $\{ \bm{u}_1,\dots,\bm{u}_{d-2} \}$ be an orthonormal basis of the linear subspace $H-\vy_0=\{ \vy - \vy_0 : \vy \in H \}$, and let $\{\bm{v}_1,\bm{v}_2 \}$ be an orthonormal basis of the linear subspace spanned by $\{ \bm{\alpha}_1,\bm{\alpha}_2 \}$. Then, every $\vy \in \R^d$ can be parameterized as 
\[
\vy = \underbrace{\vy_0 + \sum_{i=1}^{d-2} w_i \vu_i}_{=: \bar{\vy}(w_1,\dots,w_{d-2})} + \sum_{j=1}^2 \tau_j \vv_j, \quad (w_1,\dots,w_{d-2},\tau_1,\tau_2)^{\intercal} \in \R^d.
\]
Since the Jacobian matrix w.r.t. the change of variables $(w_1,\dots,w_{d-2},\tau_1,\tau_2)^{\intercal} \to \vy$ is $(\vu_1,\dots,\vu_{d-2},\vv_1,\vv_2)$, which is orthogonal, $R\big(H^{\delta}\big)$ can be expressed as 
\[
\begin{split} 
&\int_{\sum_{i=1}^2 \tau_i^2 \le \delta^2} \left  \{\int_{\R^{d-2}} \rho \big(\bar{\vy}(w_1,\dots,w_{d-2}) + {\textstyle \sum}_{i=1}^2 \tau_i \vv_i\big) \, d(w_1,\dots,w_{d-2}) \right\} d(\tau_1,\tau_2) \\
&=\int_{\sum_{i=1}^2 \tau_i^2 \le \delta^2} \left  \{\int_{H} \rho \big(\vy+ {\textstyle \sum}_{i=1}^2 \tau_i \vv_i\big) \, d\calH^{d-2}(\vy) \right\} d(\tau_1,\tau_2),
\end{split}
\]
where we used the area formula (cf. Theorem 3.9 in \cite{evans1991measure}) to deduce the second line. Assumption \ref{asp: cont} then guarantees that, for small $\delta$, the inner integral is bounded as a function of $(\tau_1,\tau_2)$, from which we conclude that $R\big (H^\delta\big) = O(\delta^2)$. 
\end{proof}

\begin{proof}[Proof of Lemma \ref{lem: thin mass}]
The proof is partially inspired by that of  Lemma 2 in \cite{chernozhukov2017detailed}. We first note that if $h_{1,j} - h_{1,i} = h_{2,j} - h_{2,i}$, then
\[
R\big(D_{ij}(\vh_1,\vh_2,t)\big) = R\big(D_{ji}(\vh_1,\vh_2,t)\big) = 0,
\]
so there is nothing to prove. Consider the case where $h_{1,j} - h_{1,i} \ne h_{2,j} - h_{2,i}$. Assume without loss of generality that $h_{2,j}-h_{2,i} > h_{1,j} - h_{1,i}$, for which $D_{ji}(\vh_1,\vh_2,t) = \varnothing$.  We shall show that 
\[
R\big(D_{ij}(\vh_1,\vh_2,t)\big) = \frac{t (h_{2,j}-h_{2,i} - h_{1,j}+h_{i,1})}{\|\vx_i - \vx_j\|} R^{+}(D_{ij}) + o(t).
\]
We divide the remaining proof into two steps. 

\medskip

\underline{Step 1}. 
We first show that 
\[
R(D_{ij}\big(\vh_1,\vh_2,t)\big) = R\big(\tilde D_{ij}(\vh_1,\vh_2,t)\big) + O(t^2),
\]
where 
\[
\tilde D_{ij}(\vh_1, \vh_2, t) \coloneqq\left \{\vy_0 + \tau \vv : \vy_0 \in D_{ij},\, \frac{t(h_{1,j} - h_{1,i})}{\|\vx_i - \vx_j\|}\leq \tau \leq \frac{t(h_{2,j}- h_{2,i})}{\|\vx_i - \vx_j\|} \right\}
\]
and $\vv = (\vx_i - \vx_j)/\|\vx_i - \vx_j\|$, the unit normal vector to the hyperplane $H_{ij} \coloneqq \{ \vy : \langle \vx_i-\vx_j,\vy \rangle = b_{ij} \}$ containing $D_{ij}$; see Figure~\ref{fig:laguerre_2}.

\begin{figure}
    \centering
    \includegraphics[width =0.48\textwidth]{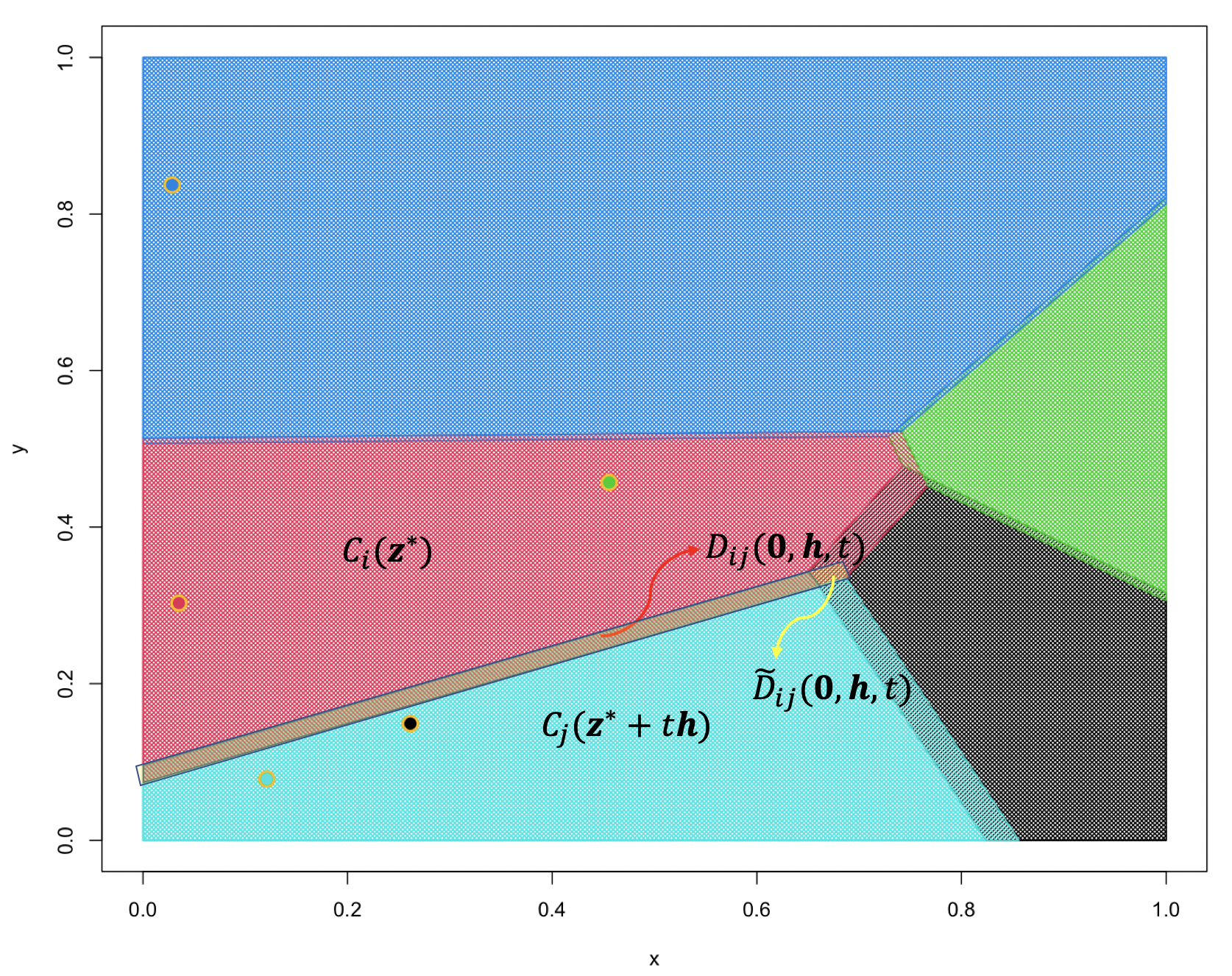}
    \caption{\small Perturbations of Laguerre cells. The figure shows the overlap of Laguerre tessellations corresponding to two dual potential vectors $\vz^*$ and $\vz^*+t\vh$. $\tilde{D}_{ij}(\vzero,\vh,t)$ is an inflation of the boundary $D_{ij}(\vz^*)$ towards the interior of $C_i(\vz^*)$, while $D_{ij}(\vzero,\vh,t)$ is a superset of the overlap between $C_j(\vz^{*}+th)$ and $C_i(\vz^*)$.}
    \label{fig:laguerre_2}
\end{figure}
Recall that 
\[
D_{ij} = \bigg (\bigcap_{k \ne i,j}\big \{ \vy : \langle \vx_i-\vx_k,\vy \rangle \ge b_{ik}  \big\} \bigg) \cap \big\{ \vy: \langle \vx_i-\vx_j,\vy \rangle = b_{ij}  \big\}.
\]

Consider $\vy = \vy_0 + \tau\vv \in  \tilde{D}_{ij}(\vh_1,\vh_2,t) \setminus D_{ij}(\vh_1,\vh_2,t)$. As $\vy \notin D_{ij}(\vh_1,\vh_2,t)$, we have either $\vy \notin C_i(\vz^*+t\vh_1)$ or $\langle \vx_i-\vx_j,\vy \rangle -b_{ij} > t(h_{2,j}-h_{2,i})$, but the latter cannot hold because $\vy \in \tilde D_{ij}(\vh_1, \vh_2, t)$. So we must have $\vy \notin C_i(\vz^*+t\vh_1)$. Furthermore, since
\[
\begin{split}
\langle \vx_i - \vx_j, \vy \rangle - b_{ij} &= \underbrace{\langle \vx_i - \vx_j, \vy_0 \rangle}_{=b_{ij}} +\tau \underbrace{\langle \vx_i - \vx_j, \vv \rangle}_{=\| \vx_i-\vx_j \|} - b_{ij} \\
&=  \tau \|\vx_i - \vx_j\|\\
& \geq t(h_{1,j} - h_{1,i}),
\end{split}
\]
there exists $k \neq i, j$ such that
\[
t(h_{1,k} - h_{1,i}) > \langle \vx_i - \vx_k, \vy \rangle - b_{ik} \ge -t\underbrace{|\langle \vx_i-\vx_k,\vx_i-\vx_j \rangle| \big (|h_{1,j}-h_{1,i}| \vee |h_{2,j}-h_{2,i}|\big)}_{=:\ell_{ijk}}.
\]
Conclude that 
\[
\tilde D_{ij}(\vh_1, \vh_2, t) \setminus D_{ij}(\vh_1,\vh_2,t) \subset \bigcup_{k \ne i,j} \tilde{A}_{ijk}(t),
\]
where 
\[
\begin{split}
\tilde{A}_{ijk}(t) &= \big\{\vy: t(h_{1,j} - h_{1,i}) \leq  \langle \vx_i-\vx_j,\vy \rangle - b_{ij} \leq t(h_{2,j} - h_{2,i}) \big \} \\
&\quad \cap \big \{ \vy : -t \ell_{ijk} \le \langle \vx_i - \vx_k, \vy \rangle - b_{ik} \le t(h_{1,k} - h_{1,i}) \big \}.
\end{split}
\]
Arguing as in Step 1 in the proof of Theorem \ref{thm: Hadamard_diff}, we see that $R\big(\tilde{A}_{ijk}(t)\big) = O(t^2)$, so  $R\big(\tilde D_{ij}(\vh_1, \vh_2, t) \setminus D_{ij}(\vh_1,\vh_2,t)\big) = O(t^2)$. 

Next, we consider $D_{ij}(\vh_1,\vh_2,t) \setminus  \tilde D_{ij}(\vh_1,\vh_2,t)$. Recall the hyperplane $H_{ij}=:\{ \vy : \langle \vx_i-\vx_j,\vy \rangle = b_{ij} \}$. Since 
\[
D_{ij}(\vh_1,\vh_2,t) \subset \left \{\vy_0 + \tau \vv : \vy_0 \in H_{ij},\, \frac{t(h_{1,j} - h_{1,i})}{\|\vx_i - \vx_j\|}\leq \tau \leq \frac{t(h_{2,j}- h_{2,i})}{\|\vx_i - \vx_j\|} \right\}, 
\]
the projection of every $\vy \in D_{ij}(\vh_1,\vh_2,t) \setminus  \tilde D_{ij}(\vh_1,\vh_2,t)$ onto $H_{ij}$ falls outside of $D_{ij}$, i.e., $\vy$ can be decomposed as $\vy = \vy_0 + \tau \vv$ for some $\vy_0 \in H_{ij} \setminus D_{ij}$ and $\frac{t(h_{1,j} - h_{1,i})}{\|\vx_i - \vx_j\|}\leq \tau \leq \frac{t(h_{2,j}- h_{2,i})}{\|\vx_i - \vx_j\|}$. By definition, there exists $k \neq i,j$ such that
$\langle \vx_i - \vx_k, \vy_0 \rangle < b_{ik}$, which implies that 
$
 t(h_{1,k}-h_{1,i}) \le \langle \vx_i - \vx_k, \vy \rangle - b_{ik} < t\ell_{ijk}.
$
Hence,
\[
D_{ij}(\vh_1,\vh_2,t) \setminus  \tilde D_{ij}(\vh_1,\vh_2,t) \subset \bigcup_{k \ne i,j} \tilde{\tilde{A}}_{ijk}(t),
\]
where 
\[
\begin{split}
\tilde{\tilde{A}}_{ijk}(t) &= \big\{\vy: t(h_{1,j} - h_{1,i}) \leq  \langle \vx_i-\vx_j,\vy \rangle - b_{ij} \leq t(h_{2,j} - h_{2,i}) \big \} \\
&\quad \cap \big \{ \vy :  t(h_{1,k}-h_{1,i}) \le \langle \vx_i - \vx_k, \vy \rangle - b_{ik} \le t\ell_{ijk} \big \}.
\end{split}
\]
Again, arguing as in Step 1 in the proof of Theorem \ref{thm: Hadamard_diff}, we see that $R\big(\tilde{\tilde{A}}_{ijk}(t)\big) = O(t^2)$, so  $R\big( D_{ij}(\vh_1, \vh_2, t) \setminus \tilde D_{ij}(\vh_1,\vh_2,t)\big) = O(t^2)$. 
Conclude that 
\[
R\big( D_{ij}(\vh_1,\vh_2,t) \Delta \tilde D_{ij}(\vh_1,\vh_2,t) \big ) =  O(t^2).
\]

\medskip

\underline{Step 2}. In view of Step 1, it suffices to prove the desired conclusion with $D_{ij}(\vh_1,\vh_2,t)$ replaced by $\tilde{D}_{ij}(\vh_1,\vh_2,t)$. Observe that
\begin{align*}
    R\big(\tilde D_{ij}(\vh_1,\vh_2,t)\big) &= \int_{\tilde D_{ij}(\vh_1,\vh_2,t)} \rho(\vy) \, d\vy \\&=  \int^{\frac{t(h_{2,j} - h_{2,i})}{\|\vx_i - \vx_j\|}}_{\frac{t(h_{1,j} - h_{1,i})}{\|\vx_i - \vx_j\|}} \left ( \int_{D_{ij}} \rho(\vy + \tau\vv) \,d\cH^{d-1}(\vy)\right )\,d\tau\\
    &= t \int^{\frac{h_{2,j} - h_{2,i}}{\|\vx_i - \vx_j\|}}_{\frac{h_{1,j} - h_{1,i}}{\|\vx_i - \vx_j\|}} \left ( \int_{D_{ij}} \rho(\vy+ t\tau\vv) \,d\cH^{d-1}(\vy)\right )\,d\tau,
\end{align*}
where the second equality follows from the coarea formula (cf. Theorem 3.11 in \cite{evans1991measure}) and translation invariance of the Hausdorff measure (cf. Theorem 2.2 in \cite{evans1991measure}). 
Recall that $R^{+}(D_{ij}) = \int_{D_{ij}} \rho\,d\cH^{d-1}$.

We first consider the case where $\cH^{d-1}(\partial \cY \cap D_{ij}) = 0$. Since $\rho$ is continuous $\cH^{d-1}$-a.e. on $\cY$ by Assumption \ref{asp: cont} and vanishes on $\cY^c$, for $\cH^{d-1}$-a.e. $\vy \in D_{ij}$ and every $\tau \in \R$, we have $\rho(\vy + t\tau\vv) \to \rho(\vy)$ as $t \downarrow 0$.  
Combined with Condition (\ref{eq: DCT}), we may apply the dominated convergence theorem to conclude that
\[
R\big(\tilde D_{ij}(\vh_1,\vh_2,t)\big) = R^{+}(D_{ij})  \frac{t (h_{2,j}-h_{2,i}-h_{1,j}+h_{1,i})}{\|\vx_i - \vx_j\|} + o(t).
\]

Next, suppose that $\cH^{d-1}(\partial \cY \cap D_{ij}) > 0$, which entails $\cH^{d-1}(\partial \cY \cap H_{ij}) > 0$. By Assumption \ref{asp: cont}, this happens only when $\cY$ is polyhedral, in which case $H_{ij}$ is a supporting hyperplane to $\cY$ and hence $\inte (\cY) \cap H_{ij} = \varnothing$. 
So, we have either $\langle \vx_i-\vx_j,\vy \rangle - b_{ij}> 0$ for all $\vy \in \interior(\cY)$, or $\langle \vx_i-\vx_j,\vy \rangle - b_{ij}< 0$ for all $\vy \in \interior(\cY)$. This implies $p_i = R\big(C_i(\vz^*)\big) = 0$ or $p_j = R\big(C_j(\vz^*)\big) = 0$, both of which are contradictions. This completes the proof of Lemma \ref{lem: thin mass}.
\end{proof}

\begin{proof}[Proof of Lemma \ref{lem: quant_stability}]
Set $\varepsilon_0 = \min_{1 \le i < j \le N} \| \vx_i-\vx_j \| > 0$. If $\vy \in C_i(\vz) \setminus C_i(\vz')$, then there exists $j \ne i$ such that
\[
z_j-z_j^*-(z_i-z_i^*)\le \langle \vx_i-\vx_j,\vy \rangle - b_{ij} < z_j'-z_j^*-(z_i'-z_i^*).
\]
Hence,
\[
C_i(\vz) \setminus C_i(\vz') \subset \bigcup_{j \ne i} \big \{ \vy : z_j-z_j^*-(z_i-z_i^*) \le \langle \vx_i-\vx_j,\vy \rangle - b_{ij} < z_j'-z_j^*-(z_i'-z_i^*) \big \}.
\]
Fix any $j \ne i$. Consider the hyperplane $H = \{ \vy : \langle \vx_i-\vx_j,\vy \rangle = b_{ij} \}$ and the associated unit normal vector $\vv = (\vx_i-\vx_j)/\|\vx_i-\vx_j\|$. Then,
\begin{equation}
\begin{split}
&R\Big ( \big \{ \vy : z_j-z_j^*-(z_i-z_i^*) \le \langle \vx_i-\vx_j,\vy \rangle - b_{ij}(\vzero) < z_j'-z_j^*-(z_i'-z_i^*) \big \} \Big ) \\
&= \int_{\frac{z_j-z_j^*-(z_i-z_i^*)}{\|\vx_i-\vx_j\|}}^{\frac{z_j'-z_j^*-(z_i'-z_i^*)}{\|\vx_i-\vx_j\|}} \left ( \int_{H} \rho(\vy + t\vv) \, d\cH^{d-1}(\vy) \right )\, dt.
\end{split}
\label{eq: stability}
\end{equation}
The inner integral is bounded by 
\[
\sup_{|t| \le t_1}\int_{H}  \rho (\vy + t\vv) \, d\cH^{d-1} (\vy) < \infty
\]
with 
\[
t_1 \coloneqq \frac{2U}{\varepsilon_0} \ge \frac{|z_j-z_j^*-(z_i-z_i^*)| \vee | z_j'-z_j^*-(z_i'-z_i^*) |}{\|\vx_i-\vx_j\|}.
\]
Hence, the right-hand side of \eqref{eq: stability} is bounded by
\[
\frac{|z_j'-z_j| + |z_i'-z_i|}{\varepsilon_0} \sup_{|t| \le t_1}\int_{H}  \rho (\vy + t\vv) \, d\cH^{d-1} (\vy),
\]
which leads to the desired result. 
\end{proof}

\medskip

6.1.2.~\textit{Proof of Theorem \ref{thm: Hadamard_diff} (ii)}. \ 
Fr\'{e}chet differentiability is equivalent to Hadamard differentiability for a function defined on an open subset of a Euclidean space, so we shall prove Hadamard differentibility of $\gamma_{\bm{\varphi}}$. We continue using the same notation as in Part (i). 
Observe that
\[
\gamma_{\bm{\varphi}}(\vz^* + t\vh) - \gamma_{\bm{\varphi}}(\vz^*) = \sum_{1 \le i \neq j \le N} \int_{C_i(\vz^*) \cap C_j(\vz^* + t\vh)} \langle \vx_j - \vx_i, \bm{\varphi}(\vy) \rangle \ dR(\vy).
\]

Let $\vh \in \R^N$ and $t > 0$. Fix any $i,j \in \{1,\dots,N \}$ with $i \ne j$. 
We have $C_i(\vz^*) \cap C_j(\vz^* + t\vh) \subset D_{ij}(\vzero,\vh,t)$ and $C_j(\vz^*) \cap C_i(\vz^* + t\vh) \subset D_{ji}(\vzero,\vh,t)$.
If $h_i = h_j$, then $R\big (D_{ij}(\vzero,\vh,t)\big) = R\big ( D_{ji}(\vzero,\vh,t)\big ) = 0$.

Consider the case where $h_j > h_i$, for which $D_{ji} (\vzero,\vh,t) = \varnothing$. Define a finite Borel measure $R_{\bm{\varphi}}$ on $\R^d$ by $dR_{\bm{\varphi}} = \| \bm{\varphi} \| \, dR = \| \bm{\varphi} \| \cdot \rho \, d\vy$. Arguing as in the proof of Part (i), we know that $R_{\bm{\varphi}}\big(D_{ij}(\vzero,\vh,t) \setminus  \big(C_i(\vz^*) \cap C_j(\vz^* + t\vh) \big)\big) = O(t^2)$, so that we have
\begin{align*}
    \int_{C_i(\vz^*) \cap C_j(\vz^* + t\vh)} \langle \vx_j - \vx_i, \bm{\varphi}(\vy) \rangle \, dR(\vy) =\int_{D_{ij}(\vzero,\vh,t)} \langle \vx_j - \vx_i, \bm{\varphi}(\vy) \rangle \, dR(\vy) + O(t^2).
\end{align*}
Next, set
\[
\tilde D_{ij}(\vh, t) \coloneqq\left \{\vy_0 + \tau \vv : \vy_0 \in D_{ij}, 0 \leq \tau \leq \frac{t(h_{j}- h_{i})}{\|\vx_i - \vx_j\|} \right\}
\]
with $\vv = (\vx_i - \vx_j)/\|\vx_i - \vx_j\|$. Arguing as in Step 1 in the proof of Lemma \ref{lem: thin mass} above, we see that $R_{\bm{\varphi}}(D_{ij}(\vzero,\vh,t) \Delta \tilde D_{ij}(\vh,t)) = O(t^2)$, so that 
\begin{align*}
&\int_{D_{ij}(\vzero,\vh,t)} \langle \vx_j - \vx_i, \bm{\varphi}(\vy) \rangle \, dR(\vy) = \int_{\tilde D_{ij}(\vh,t)} \langle \vx_j - \vx_i, \bm{\varphi}(\vy) \rangle \, dR(\vy) + O(t^2) \\&\qquad = t \int_0^{\frac{h_j - h_i}{\|\vx_i - \vx_j\|}} \left ( \int_{D_{ij}} \langle \vx_j - \vx_i,\bm{\varphi}(\vy + t\tau \vv) \rangle  \rho(\vy + t\tau \vv) \,d\cH^{d-1}(\vy)\right ) \,d\tau + O(t^2). 
\end{align*}
Since $\bm{\varphi}$ is continuous $\cH^{d-1}$-a.e. on $\cY \cap D_{ij}$ by Assumption \ref{asp: text function}, arguing as in Step 2 of the proof of Lemma \ref{lem: thin mass}, have
\[
\begin{split}
&\int_0^{\frac{h_j - h_i}{\|\vx_i - \vx_j\|}} \left ( \int_{D_{ij}} \langle \vx_j - \vx_i,\bm{\varphi}(\vy + t\tau \vv) \rangle  \rho(\vy + t\tau \vv) \,d\cH^{d-1}(\vy)\right ) \,d\tau \\
&= \frac{h_j - h_i}{\|\vx_i - \vx_j\|} \int_{D_{ij}}  \langle \vx_j - \vx_i, \bm{\varphi}(\vy) \rangle \rho(\vy) \, d\cH^{d-1}(\vy) + o(1).
\end{split}
\]
Conclude that 
\[
\begin{split}
&\int_{C_i(\vz^*) \cap C_j(\vz^* + t\vh)} \langle \vx_j - \vx_i, \bm{\varphi}(\vy) \rangle \, dR(\vy)\\ 
&\quad = \frac{t(h_i - h_j)}{\|\vx_i - \vx_j\|} \int_{D_{ij}}  \langle \vx_i - \vx_j, \bm{\varphi}(\vy) \rangle \rho(\vy) \, d\cH^{d-1}(\vy) + o(t), 
\end{split}
\]
when $h_j > h_i$. 

By symmetry, when $h_i > h_j$, we have
\[
\begin{split}
&\int_{C_j(\vz^*) \cap C_i(\vz^* + t\vh)} \langle \vx_i - \vx_j, \bm{\varphi}(\vy) \rangle \, dR(\vy)\\ 
&\quad = \frac{t(h_i - h_j)}{\|\vx_i - \vx_j\|} \int_{D_{ij}}  \langle \vx_i - \vx_j, \bm{\varphi}(\vy) \rangle \rho(\vy) \, d\cH^{d-1}(\vy) + o(t),
\end{split}
\]
where we used the fact that $D_{ji} = D_{ij}$. 

Therefore, we have shown the Gateaux differentiability for $\gamma_{\bm{\varphi}}$, 
\[
\lim_{t \downarrow 0} \frac{\gamma_{\bm{\varphi}}(\vz^* + t\vh) - \gamma_{\bm{\varphi}}(\vz^*)}{t} = \underbrace{\sum_{1 \le i < j \le N}\frac{h_i - h_j}{\|\vx_i - \vx_j\|} \int_{D_{ij}}  \langle \vx_i - \vx_j, \bm{\varphi}(\vy) \rangle \rho(\vy) \, d\cH^{d-1}(\vy)}_{=[\gamma_{\bm{\varphi}}]'_{\vz^*}(\vh)}.
\]
It remains to show that $\gamma_{\bm{\varphi}}$ is locally Lipschitz. Observe that
\begin{equation}
\begin{split}
    |\gamma_{\bm{\varphi}}(\vz) - \gamma_{\bm{\varphi}}(\vz')| &\leq \sum_{1 \le i \neq j \le N} \int_{C_i(\vz) \cap C_j(\vz')} |\langle \vx_j - \vx_i, \bm{\varphi}(\vy) \rangle|\ dR(\vy)\\
    &\leq \max_{1 \le i<j \le N} \|\vx_j - \vx_i\| \sum_{1 \le i \ne j \le N} R_{\bm{\varphi}}\big(C_i(\vz) \cap C_j(\vz')\big) \\
    &= \max_{1 \le i < j\le N} \| \vx_i-\vx_j \|\sum_{i=1}^N R_{\bm{\varphi}}\big(C_i(\vz) \setminus C_i(\vz')\big). 
    \end{split}
    \label{eq: lipschitz linear functional}
\end{equation}
Lemma \ref{lem: quant_stability} (with $\rho$ replaced by $\| \bm{\varphi} \| \cdot \rho$) now guarantees local Lipschitz continuity of $\gamma_{\bm{\varphi}}$, completing the proof.
\qed

\subsection{Proofs for Section \ref{sec: limit}}
\label{sec: proof limit}

We will prove Propositions \ref{lem: kitagawa} and \ref{prop: density}, Theorem \ref{thm: limit}, Lemma \ref{lem: nondegeneracy}, the claim in Remark \ref{lem: simple case}, Theorem \ref{prop: bootstrap consistency}, and Proposition \ref{prop: pointwise}.

\medskip

6.2.1.~\textit{Proof of Proposition \ref{lem: kitagawa}}. 
The proof can be simplified when $\cY$ is compact and convex and $\rho$ is sufficiently regular, since in that case, the results from \cite{kitagawa2019convergence,bansil2022quantitative} are directly applicable. Since $\cY$ may be unbounded and $\rho$ may have discontinuities in our setting, we need additional work. For the reader's convenience, we provide a (mostly) self-contained proof.

\medskip

\underline{Step 1}. We first establish regularity of the dual objective function $\Phi(\cdot,\bm{q})$ together with its strong concavity in certain directions when $\bm{q} \in \cQ_+$. 
Set $\bm{G}(\vz) = \big( G_i(\vz)  \big)_{i=1}^N = \big( R(C_i(\vz)) \big)_{i=1}^N$.
Lemma \ref{lem: quant_stability} implies that the map $\vz \mapsto \bm{G}(\vz) $ is continuous, so 
the set $\cZ_+ \coloneqq\big\{ \vz : G_i(\vz) > 0, \ \forall i \in \{ 1,\dots, N \} \big\}$ is open. For every $\bm{q} \in \cQ$, the map $\R^N \ni \vz \mapsto \Phi(\vz,\bm{q})$ is clearly concave. Furthermore, for every fixed $i \in \{1,\dots,N \}$ and $\vz \in \R^N$, 
\[
\frac{\partial}{\partial z_i'} \min_{1 \le j \le N} \big( \|\vy-\vx_j\|^2/2 - z_j' \big) \Big|_{\vz'=\vz} 
=
\begin{cases}
-1 & \text{if} \ \vy \in \inte(C_i(\vz)), \\
0 & \text{if} \ \vy \in \bigcup_{j \ne i}\inte(C_j(\vz)).
\end{cases}
\]
The dominated convergence theorem then yields that $\Phi(\cdot,\bm{q})$ is partially differentiable with gradient
\[
\nabla_{\vz} \Phi(\vz,\bm{q}) = \bm{q} - \bm{G}(\vz).
\]
Since the gradient is continuous in $\vz$, $\Phi(\cdot,\bm{q})$ is continuously differentiable. 
The set of optimal solutions $\cZ^*(\bm{q})$ to the dual problem (\ref{eq: dual}) is convex and agrees with $\langle \vone \rangle^{\bot} \cap \{ \vz : \nabla_{\vz} \Phi(\vz,\bm{q}) = \vzero \}$.
The duality theory for OT problems (cf. Theorem 6.1.5 in \cite{ambrosio2005}) guarantees that $\cZ^*(\bm{q})$ is nonempty for every $\bm{q} \in \cQ$. 

Next, we observe that the mapping $\vz \mapsto \bm{G}(\vz)$ is differentiable on $\cZ_+$. Indeed, for every $i \in \{ 1,\dots, N \}$, $\vh \in \R^N$, and $t > 0$, we have
\[
\begin{split}
    G_i(\vz + t\vh) - G_i(\vz) &= R\big(C_i(\vz + t\vh) \setminus C_i(\vz)\big) - R\big(C_i(\vz) \setminus C_i(\vz  +t \vh)\big) \\&=
    \sum_{j \neq i}\Big \{  R\big(C_i(\vz  +t \vh) \cap C_j(\vz)\big) - R\big(C_i(\vz) \cap C_j(\vz + t \vh)\big) \Big \}.
\end{split}
\]
Arguing as in the proof of Theorem \ref{thm: Hadamard_diff} (ii), we see that $G_i$ is Hadamard differentiable (and hence Fr\'{e}chet differentiable) at $\vz \in \cZ_+$ with derivative
\[
\R^N \ni \vh \mapsto \sum_{j \ne i} \frac{h_i-h_j}{\|\vx_i-\vx_j\|} R^+\big ( C_i(\vz) \cap C_j (\vz) \big), 
\]
that is,
\begin{equation}
\frac{\partial G_i(\vz)}{\partial z_j}=  
\begin{cases}
 - \frac{1}{\|\vx_i-\vx_j\|} R^+\big ( C_i(\vz) \cap C_j (\vz) \big) & \text{if} \ i \ne j, \\
 \sum_{k \ne i} \frac{1}{\|\vx_i-\vx_k\|} R^+\big ( C_i(\vz) \cap C_k (\vz) \big) & \text{if} \ i = j.
\end{cases}
\label{eq: hessian}
\end{equation}
Denote the Jacobian matrix of $\bm{G}$ by $\nabla \bm{G}$. For every $\vz \in \cZ_{+}$, the $N \times N$ matrix $\nabla \bm{G}(\vz)$ is symmetric positive semidefinite (as $-\nabla \bm{G}$ agrees with the Hessian of the concave function $\Phi(\cdot,\bm{q})$) with smallest eigenvalue $0$ corresponding to eigenvector $\vone$. 

Now, by Lemma \ref{lem: strict convexity} below, for every fixed $\vz \in \cZ_+$, the second smallest eigenvalue of $\nabla \bm{G}$ is bounded away from zero in a small neighborhood of $\vz$, i.e., for sufficiently small $\delta > 0$,
\begin{equation}
\inf_{\vz' : \| \vz'-\vz \| < \delta} \inf_{\vv \in \langle \vone \rangle^{\bot}} \frac{\vv^{\intercal} \nabla \bm{G}(\vz') \vv}{\| \vv \|^2} > 0.
\label{eq: eigenvalue}
\end{equation}
In particular, the linear mapping $\nabla \bm{G}(\vz) : \langle \vone \rangle^{\bot} \to \langle \vone \rangle^{\bot}$ is isomorphic.

\medskip

\underline{Step 2}. Now, we shall prove the claims of the proposition. 
For every $\bm{q} \in \cQ_+$, since $\cZ^*(\bm{q}) \subset \langle \vone \rangle^{\bot} \cap \cZ_+$, $\Phi(\cdot,\bm{q})$ is strictly concave on $\cZ^*(\bm{q})$, which implies that $\cZ^*(\bm{q})$ is a singleton, i.e., $\cZ^*(\bm{q}) = \{ \vz^*(\bm{q}) \}$. 
Next, we shall verify Hadamard differentiability of $\vz^*(\cdot)$ at $\vp \in \cQ_+$, which leads to the second claim of the proposition. 
Pick any sequence $t_n \downarrow 0$. Let $\vh_n$ be  a sequence in $\langle \vone \rangle^\bot$ such that $\vh_n \to \vh$. Set $\vz_n = \vz^*(\bm{q} + t_n\vh_n) \in \langle \vone \rangle^{\bot} \cap \cZ_+$ (for large $n$). By construction, we have
\[
\bm{G}(\vz_n) - \bm{G}(\vz^*) = t_n \vh_n =  O(t_n).
\] 
Since $\bm{G}|_{\langle \vone \rangle^{\bot} \cap \cZ_+}$ is one-to-one and continuous, the only possible cluster point of the sequence $\vz_n$ is $\vz_{\infty}$. By Lemma \ref{lem: compactness} below, the sequence $\vz_n$ is bounded, so we have
$\vz_n \to \vz_{\infty}$  by the compactness argument. By differentiability of $\bm{G}$ and the estimate in  \eqref{eq: eigenvalue}, we further obtain $\| \vz_n - \vz^* \| = O(t_n)$. 
Now, observe that
\[
t_n \vh_n = \bm{G}(\vz_n) - \bm{G}(\vz^*) = \nabla \bm{G}(\vz^*) (\vz_n - \vz^*) + o(t_n).
\]
Since $\nabla \bm{G}(\vz^*) (\vz_n - \vz^*) \in \langle \vone \rangle^{\bot}$ (as $\vz_n- \vz^* \in \langle \vone \rangle^{\bot}$), which entails $t_n \vh_n - o(t_n) \in \langle \vone \rangle^{\bot}$ above,
we have
\[
\frac{\vz_n - \vz^*}{t_n} \to \big (\nabla \bm{G}(\vz^*)|_{\langle \vone \rangle^{\bot}} \big)^{-1} \vh. 
\]
 The limit is linear in $\vh$. 
The preceding argument shows that  $\vz^*(\cdot)$ is Hadamard differentiable at $\vp$, completing the proof.
\qed

It remains to prove the following lemmas used in the preceding proof.

\begin{lemma}
\label{lem: strict convexity}
Set $\mathsf{C}_{\vx} = \max_{1 \le i<j \le N} \| \vx_i - \vx_ j\|$. 
Under Assumptions \ref{asp: cont} and \ref{asp: poincare}, for every $\varepsilon > 0$ and $\vz \in \R^N$ with $\min_{1 \le i \le N} G_i(\vz) \ge \varepsilon$, the second smallest eigenvalue of $\nabla \bm{G}(\vz)$ is at least $\frac{8\varepsilon}{N^4 \mathsf{C}_{\vx}\mathsf{C}_{\mathrm{P}}}$. 
\end{lemma}

\begin{proof}
The lemma essentially follows from \cite[Theorem 4.3]{bansil2022quantitative} with $DG = -\nabla \bm{G}$ under our setting. The basic idea  is to regard $\nabla \bm{G}(\vz)$ as the graph Laplacian of a weighted graph over the vertex set $\{1,\dots,N\}$ with the weighted adjacent matrix $\bm{A}=(a_{ij})_{1 \le i,j \le N}$ with $a_{ij} = \| \vx_i-\vx_j\|^{-1} R^{+}\big(C_i(\vz) \cap C_j(\vz) \big)$ for $i \ne j$. Using Cheeger's inequality and the assumption that $\min_{1 \le i \le N} G_i(\vz) \ge \varepsilon$, one can verify that the graph constructed by removing edges with small weights  is still connected, and then lower bound the second smallest eigenvalue of the corresponding graph Laplacian via diameter (Theorem 4.2 in \cite{mohar1991eigenvalues}). 
Since we do no assume compactness of the support of $R$, we briefly sketch the required modifications. 
Cheeger's inequality in \cite[Lemma 4.3]{bansil2022quantitative} follows by our Assumption \ref{asp: poincare}; see \cite[Lemma 2.2]{milman2007role} and \cite{bobkov1997iso}. See also the discussion after Assumption \ref{asp: poincare}. The proof of \cite[Proposition 4.4]{bansil2022quantitative} goes through with $2C_\nabla$ replaced by $\mathsf{C}_{\vx}$. 
Recalling that the derivative expression (\ref{eq: hessian}) holds everywhere on $\cZ_+$ and following the proof of \cite[Theorem 4.3]{bansil2022quantitative}, we obtain the result. 
\end{proof}

\begin{lemma}
\label{lem: compactness}
Under Assumption \ref{asp: cont}, the set $\langle \vone \rangle^{\bot} \cap \{ \vz : \min_{1 \le i \le N} G_i(\vz) \ge \varepsilon \}$ is compact for every fixed $\varepsilon > 0$. 
\end{lemma}

\begin{proof}
It suffices to show that the above set is bounded.
Pick any $\vz \in \langle \vone \rangle^{\bot} \cap \{ \vz : \min_{1 \le i \le N} G_i(\vz) \ge \varepsilon \}$. Assume without loss of generality that $z_1 = \min_{1 \le i \le N} z_i$ and $z_N = \max_{1 \le i \le N}z_i$. 
By strong duality, for $\vY \sim R$, 
\[
\begin{split}
\underbrace{\frac{1}{2} \E \big [\| \vY - T_{\vz}(\vY) \|^2 \big]}_{\ge 0} &= \langle \vz, \bm{G}(\vz) \rangle + \int \min_{1 \le i \le N} \left ( \frac{1}{2}\|\vy-\vx_i\|^2 - z_i \right ) \, dR(\vy) \\
&\le \langle \vz, \bm{G}(\vz) \rangle + \int  \left ( \frac{1}{2}\|\vy-\vx_N\|^2 - z_N \right ) \, dR(\vy) \\
&\le  ( z_1-z_N ) \underbrace{\min_{1 \le i \le N} G_i(\vz)}_{\ge \varepsilon} + \max_{1 \le i \le N} \int \frac{1}{2}\| \vy-\vx_i \|^2 \, dR(\vy), 
\end{split}
\]
so $z_N-z_1 \le \bar{\mathsf{C}} \coloneqq\varepsilon^{-1}\big( \E[\|\vY\|^2] + \max_{1 \le i \le N}\|\vx_i\|^2\big)$. 
This implies $z_1 \le z_i \le z_1 +\bar{\mathsf{C}}$ for every $i \ne 1$. 
Now, since $\vz \in \langle \vone \rangle^{\bot}$, we have $z_1 = -\sum_{i \ne 1}z_i$, so $\sum_{i \ne 1}z_i \le - (N-1) \sum_{i \ne 1}z_i + (N-1)\bar{\mathsf{C}}$, i.e., $z_1 = -  \sum_{i \ne 1} z_i \ge  -(1-N^{-1})\bar{\mathsf{C}}$. Likewise, we have $z_N = -\sum_{i \ne N} z_i \le (1-N^{-1})\bar{\mathsf{C}}$. Conclude that $|z_i| \le (1-N^{-1})\bar{\mathsf{C}}$ for every $i \in \{1,\dots,N \}$. 
\end{proof}

6.2.2.~\textit{Proof of Proposition \ref{prop: density}}. \ 
\underline{Part (i)}. The function $g$ is convex and onto $[0,\infty)$. 
The latter follows from the assumption that $\sum_{j=1}^N \beta_{ij} > 0$ for every $i \in \{ 1,\dots, N\}$. 
Hence, for every open interval $(a,b) \subset [0,\infty)$, the inverse image $g^{-1}((a,b))$ is nonempty and open, so that $\Prob(V \in (a,b)) = \Prob\big(\vW_{-N} \in g^{-1}((a,b)) \big) > 0$, yielding that the support of $V$ agrees with $[0,\infty)$. If $g(\vw_{-N}) = 0$, then $w_i = w_j$ for some $i \ne j$, so the level set $g^{-1}(\{ 0 \})$ has Lebesgue measure zero. 
This implies that the distribution function of $V$ is continuous at the left endpoint of the support of $V$. Now, noting that $g$ is convex, the claim of Part (i) follows from Theorem 11.1 in \cite{Davydov1998}.

\medskip
\underline{Part (ii)}.
Set $h (\vw) = \sum_{1 \le i < j \le N} \beta_{ij} |w_i - w_j|$ for $\vw = (w_1,\dots,w_N)^{\intercal}$. Then
\[
\nabla g(\vw_{-N}) = \underbrace{\begin{pmatrix}
1 & 0 & \cdots & 0 & -1 \\
0 & 1 & \cdots & 0 & -1 \\
\vdots & \vdots & \ddots & \vdots & \vdots \\
0 & 0 & \cdots & 1 & -1
\end{pmatrix}
}_{(N-1) \times N}
\nabla h(\vw), \quad \vw_{-N} \in \bigcup_{\sigma} E_\sigma, 
\]
with $w_{-N} = -\sum_{i=1}^{N-1}w_i$. 
The matrix in front of $\nabla h$ is isomorphic from $\langle \vone \rangle^{\bot} \subset \R^{N}$ onto $\R^{N-1}$. 
On the set $w_{\sigma(1)} > \cdots > w_{\sigma(N)}$, the partial derivatives of $h$ are given by
\[
\frac{\partial h}{\partial w_{\sigma(i)}} = \left (-\sum_{j=1}^{i-1}  + \sum_{j=i+1}^N \right ) \beta_{\sigma(i),\sigma(j)}.
\]
Since $\nabla h(\vw) \in \langle \vone \rangle^{\bot}$ and $\| \nabla h(\vw) \| \ge \sum_{j=2}^N \beta_{\sigma(1),\sigma(j)} > 0$ by assumption, we have 
$
\inf_{E_\sigma} \| \nabla g \| > 0.
$
Since $\{ E_\sigma \}_{\sigma}$ gives a partition of $\R^{N-1}$ up to Lebesgue negligible sets, we have 
\[
\mathrm{essinf}_{\R^{N-1}} \| \nabla g \| > 0.
\]
Since $g$ is Lipschitz, we may apply the coarea formula, Theorem 3.13 in \cite{evans1991measure}, to conclude that
\[
\frac{d}{dv} \int_{\{ g \le v\}} \phi_{\bm{\Sigma}}\mathbbm{1}_{E_\sigma} \, d\calH^{N-1} = \int_{\{ g = v\}} \frac{\phi_{\bm{\Sigma}} \mathbbm{1}_{E_\sigma}}{\| \nabla g \|} \, d\calH^{N-2}
\]
for Lebesgue almost every $v \in [0,\infty)$. The density formula now follows from the fact that $\| \nabla g \| = \mathsf{C}(B_\sigma)$ on $E_\sigma$.
\qed

\medskip

6.2.3.~\textit{Proof of Theorem \ref{thm: limit}}. \
\underline{Part (i)}. 
The first claim follows from combining the CLT for $\hat{\vz}_n$ in (\ref{eq: CLT}), the stability results in Theorem \ref{thm: Hadamard_diff}, and the extended delta method (Lemma \ref{lem: functional delta method}). 

The second claim follows from Proposition \ref{prop: density} with $\beta_{ij} = \| \vx_i-\vx_j \|^{s-1} R^+(D_{ij})$ and $\bm{\Sigma}$ being the upper left $(N-1) \times (N-1)$ submatrix of $\bm{B}\bm{A}\bm{B}^\intercal$.
We note that $\sum_{j \ne i} \beta_{ij} \ge \min_{j \ne i} \| \vx_i - \vx_j \|^{s-1} \sum_{j \ne i} R^+(D_{ij}) = \min_{j \ne i}\|\vx_i-\vx_j\|^{s-1} R^+(\partial C_i)> 0$ by Cheeger's inequality, and $\bm{\Sigma}$ is nonsingular as $\bm{B}\bm{A}\bm{B}^{\intercal}$ is isomorphic from $\langle \vone \rangle^{\bot}$ onto $\langle \vone \rangle^{\bot}$, so Proposition \ref{prop: density} applies.

For the final claim concerning moment convergence of $\Upsilon \big( \sqrt{n}\| \hat{\vT}_n - \vT^* \|_{L^s(R)}^s\big )$, it suffices to verify its uniform integrability. Since $\Upsilon$ has polynomial growth, it suffices to show that for every $k \in \NN$, $\sup_{n \in \NN} \E\big [ \big ( \sqrt{n}\| \hat{\vT}_n - \vT^* \|_{L^s(R)}^s \big)^k \big ] < \infty$. Arguing as in (\ref{eq: delta Lip2}), we have $\| \hat{\vT}_n - \vT^* \|_{L^s(R)}^s \le \max_{i \ne j} \| \vx_i-\vx_j \|^s \sum_i R\big ( C_i(\hat{\vz}_n) \setminus C_i (\vz^*) \big)$. Now, by Theorem 1.3 in \cite{bansil2022quantitative} and symmetry, we have $\sum_i R\big ( C_i(\hat{\vz}_n) \setminus C_i (\vz^*) \big) \le 2N \| \hat{\vp}_n - \vp \|_1$ with $\| \cdot \|_1$ denoting the $\ell^1$-norm (\cite{bansil2022quantitative} assume compactness of the support of $R$ but the proof of their Theorem 1.3 goes through under our setting).  Hence, $ \big ( \sqrt{n}\| \hat{\vT}_n - \vT^* \|_{L^s(R)}^s \big)^k$ can be bounded above, up to a constant independent of $n$,  by $\big (\sqrt{n} \| \hat{\vp}_n-\vp \|_1 \big)^k$ whose expectation is bounded in $n$ by a simple application of the Marcinkiewicz-Zygmund inequality (cf. Theorem 10.3.2 in \cite{chow2003probability}) or the concentration inequality for $\| \hat{\vp}_n - \vp \|_1$ \cite{weissman2003inequalities}.

\underline{Part (ii)}. The proof is analogous to Part (i) and omitted for brevity. 
\qed

\medskip 

6.2.4.~\textit{Proof of Lemma \ref{lem: nondegeneracy}}. \ 
For notational simplicity, we omit the dependence on~$\bm{\varphi}$. Since $a_{ji} = -a_{ij}$, we have 
$\sum_{1 \le i < j \le N} (W_i-W_j) a_{ij} = \sum_{i \ne j} W_i a_{ij} = \sum_{i=1}^N W_i \tilde{a}_i$. Hence, the variance $\sigma^2$ is
$
\sigma^2 = \tilde{\bm{a}}^{\intercal} \bm{B}\bm{A}\bm{B}^\intercal \tilde{\bm{a}},
$
where $\tilde{\bm{a}} = (\tilde{a}_1,\dots,\tilde{a}_N)^{\intercal}$. Since $\bm{B}\bm{A}\bm{B}^\intercal$ is isomorphic from $\langle \vone \rangle^\bot$ onto itself, $\sigma^2 = 0$ if and only if $\tilde{a}_1 = \cdots = \tilde{a}_N$. 
\qed

\medskip 

6.2.5.~\textit{Proof of the claim in Remark \ref{lem: simple case}}. \ 
Set $\bm{G}(\vz) = \big( G_i(\vz) 
 \big)_{i=1}^N = \big( R(C_i(\vz)) \big)_{i=1}^N$ as in the proof of Proposition \ref{lem: kitagawa}. Since $\langle \vx_i-\vx_j, \vy \rangle = b_{ij} = \|\vx_i\|^2/2-z_i^* - (\|\vx_j\|^2/2-z_j^*)$ for $\vy \in D_{ij}$, we have 
\[
\sum_{j \ne i} \frac{1}{\| \vx_i - \vx_j \|} \int_{D_{ij}} \langle \vx_i-\vy_j,\vy \rangle \rho (\vy) \, dR(\vy) = \big (\nabla G_i(\vz^*) \big)^{\intercal}\vh,
\]
where $\vh = \big(\|\vx_1\|^2/2-z_1^*,\dots,\|\vx_N\|^2/2-z_N^*\big)^{\intercal}$.
In view of Remark \ref{rem: rank} and the proof of the preceding lemma, we have
\[
\sigma_{\bm{\varphi}}^2 = \vh^{\intercal} \bm{A} \vh
=\Var_{P}\big(\| \cdot \|^2/2 - \psi \big),
\]
completing the proof.
\qed

\medskip

6.2.6.~\textit{Proof of Theorem \ref{prop: bootstrap consistency}}.
By the conditional CLT for the bootstrap (cf. Theorem 23.4 in \cite{vanderVaart1998asymptotic}), the conditional law of $\sqrt{n}(\hat{\vp}_n^{B}-\hat{\vp}_n)$ given the sample converges weakly to $\cN(\vzero,\bm{A})$ in probability.
The delta method for the bootstrap (cf. Theorem 23.5 in \cite{vanderVaart1998asymptotic}) yields that the conditional law of $\sqrt{n}(\hat{\vz}_n^B-\hat{\vz}_n)$ given the sample converges weakly to $\cN(\vzero,\bm{\Sigma})$ in probability, where $\bm{\Sigma} = \bm{B}\bm{A}\bm{B}^{\intercal}$.
Given this, the first claim of Part (ii) follows from another application of the delta method for the bootstrap.
For the second claim of Part (ii), from (\ref{eq: lipschitz linear functional}) and Theorem 1.3 in \cite{bansil2022quantitative}, we have $|\langle \bm{\varphi},\hat{\vT}_n^B - \hat{\vT}_n\rangle_{L^2(R)}| \le 2N\| \| \bm{\varphi} \| \|_{\infty} \max_{i < j} \| \vx_i - \vx_j \| \| \hat{\vp}_n^B - \hat{\vp}_n \|_{1}$ and the conditional fourth moment of $\sqrt{n}\| \hat{\vp}_n^B - \hat{\vp}_n \|_{1}$ is bounded in probability by direct computations. So the second claim of Part (ii) follows by Lemma 2.1 in \cite{kato2011note}.

For the rest, we focus on proving Part (i). 
We first observe that
\[
\begin{pmatrix}
\sqrt{n}(\hat{\vz}_n^B-\hat{\vz}_n) \\
\sqrt{n}(\hat{\vz}_n-\vz^*)
\end{pmatrix}
\stackrel{d}{\to}
\cN \left ( \vzero, \begin{pmatrix} \bm{\Sigma} & \vzero \\ \vzero & \bm{\Sigma} 
\end{pmatrix}
\right)
\]
unconditionally (cf. Problem 23.8 in \cite{vanderVaart1998asymptotic}), which implies that
\[
\begin{pmatrix}
\sqrt{n}(\hat{\vz}_n^B-\vz^*) \\
\sqrt{n}(\hat{\vz}_n-\vz^*)
\end{pmatrix}
\stackrel{d}{\to}
\cN \left ( \vzero, \begin{pmatrix} 2\bm{\Sigma} & \bm{\Sigma} \\ \bm{\Sigma} & \bm{\Sigma} 
\end{pmatrix}
\right)
.
\]
From this, the extended delta method (Lemma \ref{lem: functional delta method}) yields 
\[
\sqrt{n}\|\hat{\vT}_n^B - \hat{\vT}_n \|_{L^s(R)}^s = \sqrt{n}\delta_s(\hat{\vz}_n^B,\hat{\vz}_n) = [\delta_s]_{(\vz^*,\vz^*)}'\big( \sqrt{n}(\hat{\vz}_n^B-\vz^*),\sqrt{n}(\hat{\vz}_n-\vz^*) \big) + \varepsilon_n
\]
with $\varepsilon_n \to 0$ in probability. From the expression (\ref{eq: h_deriv}) of the derivative, we see that
\[
[\delta_s]_{(\vz^*,\vz^*)}'\big( \sqrt{n}(\hat{\vz}_n^B-\vz^*),\sqrt{n}(\hat{\vz}_n-\vz^*) \big) = \underbrace{[\delta_s]_{(\vz^*,\vz^*)}'\big( \sqrt{n}(\hat{\vz}_n^B-\hat{\vz}_n),\vzero \big)}_{=: S_n^B}.
\]
By the continuous mapping theorem, the conditional law of $S_n^B$ given the sample converges weakly to the limit law in (\ref{eq: limit law}) in probability. Let $S$ be a random variable following the limit law in (\ref{eq: limit law}). We will show that
\[
\sup_{g \in \mathsf{BL}_1(\R)} \Big |\E\big[ g(S_n^B + \varepsilon_n) \mid \vX_1,\dots,\vX_n \big] - \E[g(S)] \Big | \to 0
\]
in probability, which leads to the conclusion of Part (i). For every $g \in \mathsf{BL}_1(\R)$, 
\[
\big|g(S_n^B + \varepsilon_n) - g(S_n^B)\big| \le 2 \wedge |\varepsilon_n|.
\]
By Markov's inequality, we obtain
\[
\E\big[ 2 \wedge |\varepsilon_n| \,\big|\, \vX_1,\dots,\vX_n \big] \to 0
\] 
in probability. Now, we have
\[
\begin{split}
&\sup_{g \in \mathsf{BL}_1(\R)}
\Big|\E\big[ g(S_n^B + \varepsilon_n) \,\big|\, \vX_1,\dots,\vX_n \big] - \E[g(S)] \Big | \\
&\quad \le \sup_{g \in \mathsf{BL}_1(\R)}
\Big|\E\big[ g(S_n^B) \,\big|\, \vX_1,\dots,\vX_n \big] - \E[g(S)] \Big |  + \E\big[ 2 \wedge |\varepsilon_n| \,\big|\, \vX_1,\dots,\vX_n \big],
\end{split}
\]
and both terms on the right-hand side converge to zero in probability.
\qed

\medskip

6.2.7.~\textit{Proof of Proposition \ref{prop: pointwise}}. \ 
Since $K \subset \interior \big(C_i(\vz^*)\big)$ is compact, we have
    \[
    \varepsilon_0 = \min_{j \ne i} \inf_{\vy \in K} \big \{ \langle \vx_i - \vx_j, \vy \rangle - b_{ij} \big\} > 0.
    \]
    For every $j \ne i$,
    \[
    \begin{split}
    \langle \vx_i - \vx_j, \vy \rangle - b_{ij} (\hat{\vz}_n) &= \langle \vx_i - \vx_j, \vy \rangle -b_{ij} + (\hat{z}_{n,i}-z_i^*) - (\hat{z}_{n,j}-z_j^*) \\
    &\ge \varepsilon_0 - 2\max_{1 \le k \le N} |\hat{z}_{n,k} - z_{k}^*|,
    \end{split}
    \]
    so that we have
    \be
    \label{eq: superconsistency_1}
    \begin{split}
    &\PP\big( \hat{\vT}_n(\vy) = \vT^*(\vy),
  \forall \vy \in K\big) \ge \Prob \Big ( K \subset \inte\big(C_i(\hat{\vz}_n)\big) \Big) \\
  &\quad \ge \Prob \left ( \max_{1 \le k \le N} |\hat{z}_{n,k} - z_{k}^*| <  \varepsilon_0/2 \right ) \ge \Prob \left ( \| \hat{\vz}_n - \vz^* \| <  \varepsilon_0/2 \right ).  
  \end{split}
    \ee

    We derive a concentration inequality for $\| \hat{\vz}_n - \vz^* \|$. 
   Set $\hat{p}_{n,(1)} = \min_{1 \le j \le N} \hat{p}_{n,j}$. The union bound and Hoeffding's inequality (cf. \cite{boucheron2013concentration}) yield
\[
       \PP\left ( \hat{p}_{n,(1)} < \delta_0 \right )\leq \sum_{j=1}^N\PP\big ( \hat p_{n,j} < \delta_0 \big ) \le \sum_{j=1}^N \Prob \big(\hat{p}_{n,j} - p_j < -\delta_0\big) 
       \leq N e^{-2n\delta_0^2}.
\]
Set $\bm{G}(\vz) = \big( G_i(\vz) 
 \big)_{i=1}^N = \big( R(C_i(\vz)) \big)_{i=1}^N$ as in the proof of Proposition \ref{lem: kitagawa}. Suppose $\hat{p}_{n,(1)} \ge \delta_0$ holds. From the proof of Proposition \ref{lem: kitagawa}, we know that
 \[
\frac{d}{dt} \vz^*\big( t\hat{\vp}_n^{-N} + (1-t) \vp^{-N} \big) = \Big ( \nabla \bm{G}\big(\vz^*\big(t \hat{\vp}_n^{-N} + (1-t)\vp^{-N} \big)  \big)\big|_{\langle \vone \rangle^{\bot}}\Big)^{-1} \big (\hat{\vp}_n - \vp \big). 
 \]
 Since $\min_{1 \le j \le N} \big(t \hat{p}_{n,j} +(1-t)tp_{j} \big) \ge \delta_0$ for $t \in [0,1]$, using Lemma \ref{lem: strict convexity}, we have
\begin{equation}
\begin{split}
       \|\hat \vz_n - \vz^*\| 
       &=\left \| \int_0^1 \frac{d}{dt} \vz^*\big(t \hat{\vp}_n^{-N} + (1-t) \vp^{-N} \big) \, dt \right \| \\
       &\le \frac{N^4 \mathsf{C}_{\vx}\mathsf{C}_{\mathrm{P}}}{8 \delta_0} \| \hat{\vp}_n-\vp \| \le \frac{N^4 \mathsf{C}_{\vx}\mathsf{C}_{\mathrm{P}}}{8 \delta_0} \| \hat{\vp}_n-\vp \|_1, 
       \end{split}
       \label{eq: z_stability}
\end{equation}
   where $\| \cdot \|_{1}$ denotes the $\ell^1$-norm. 
Now, by \cite{weissman2003inequalities}, the following concentration inequality holds for $\|\hat \vp_n - \vp\|_{1}$,
   \begin{equation}
   \PP \left ( \|\hat \vp_n - \vp\|_{1} \geq t \right ) \leq (2^N-2)e^{-nt^2/2}, \ t > 0.
   \label{eq: concentration}
   \end{equation}
   Plugging \eqref{eq: z_stability} into \eqref{eq: superconsistency_1}, and using \eqref{eq: concentration}, we obtain
   \begin{align*}
       \PP\big( \hat{\vT}_n(\vy) = \vT^*(\vy),
  \forall \vy \in K\big)
  &\ge 1 - \PP\left ( \hat{p}_{n,(1)} < \delta_0 \right ) - \PP \left ( \|\hat \vp_n - \vp\|_{1} \geq  \frac{4\varepsilon_0 \delta_0}{N^4\mathsf{C}_{\vx} \mathsf{C}_{\mathrm{P}} }\right )\\
  &\geq 1 - N e^{-2n \delta_0^2}  - (2^N-2) e^{-\frac{8n \varepsilon_0^2 \delta_0^2}{N^8 \mathsf{C}_{\vx}^2 \mathsf{C}_{\mathrm{P}}^2}}.
   \end{align*}
   The final claim follows from the Borel-Cantelli lemma.
\qed

\subsection{Proofs for Section \ref{sec: applications}}
\label{sec: proof Sec4}

We provide proofs of Corollaries \ref{prop: L1 confidence set}, \ref{cor: bootstrap maximum correlation}, and \ref{prop: confidence band}.

\medskip

6.3.1.~\textit{Proofs of Corollaries \ref{prop: L1 confidence set} and \ref{cor: bootstrap maximum correlation}}. \ 
Both corollaries directly follow from Theorem \ref{prop: bootstrap consistency} and Lemma 23.3 in \cite{vanderVaart1998asymptotic}, upon noting that the limit law has a continuous distribution function in each case.
\qed

\medskip

6.3.2.~\textit{Proof of Corollary \ref{prop: confidence band}}. \ 
%\label{sec: average}
Suppose Assumptions \ref{asp: cont} and \ref{asp: poincare} hold.  
Let $\vY \sim R$ be independent of the sample. Define the event $A = \big \{ \sqrt{n}\| \hat{\vT}_n - \vT^* \|_{L^1(R)}  \le \hat{\tau}_{n,1-\alpha/2} \big \}$. Then, we have 
\[
\begin{split}
&1-\int \Prob \big ( \vT^*(\vy) \in \cC_{n,1-\alpha}(\vy) \big) \, dR(\vy)  \\&= \Prob \big ( \vT^*(\vY) \notin \cC_{n,1-\alpha}(\vY) \big) \\&=\Prob \left ( \sqrt{n}\| \hat{\vT}_n (\vY) - \vT^*(\vY) \| > \hat{\tau}_{n,1-\alpha/2}\cdot \frac{2}{\alpha} \right ) \\
&\le \Prob \left ( \left \{ \sqrt{n}\| \hat{\vT}_n (\vY) - \vT^*(\vY) \| > \hat{\tau}_{n,1-\alpha/2} \cdot \frac{2}{\alpha} \right \} \cap A \right ) + \Prob (A^c).
\end{split}
\]
By construction, $\Prob(A^c) = \alpha/2 + o(1)$. Furthermore, on the event $A$,  Markov's inequality yields 
\[
\begin{split}
&\Prob \left ( \left \{ \sqrt{n}\| \hat{\vT}_n (\vY) - \vT^*(\vY) \| > \hat{\tau}_{n,1-\alpha/2} \cdot \frac{2}{\alpha} \right \} \,\middle|\, \vX_1,\dots,\vX_n \right ) \\
&\le \frac{\sqrt{n}\| \hat{\vT}_n - \vT^* \|_{L^1(R)}}{\hat{\tau}_{n,1-\alpha/2}} \cdot \frac{\alpha}{2} \le \frac{\alpha}{2}.
\end{split}
\]
Conclude that 
\[
1-\int \Prob \big ( \vT^*(\vy) \in \cC_{n,1-\alpha}(\vy) \big) \, dR(\vy) \le \alpha+o(1). 
\]
\qed

\subsection{Proofs for Section \ref{sec: dual holder}}
\textcolor{black}{We provide proofs of Proposition \ref{prop: impossibility}, Theorem \ref{thm: dual_holder_limit}, and Proposition \ref{prop: asymptotic efficiency}.}

\medskip

6.4.1.~\textit{Proof of Proposition \ref{prop: impossibility}}. \ 
\textcolor{black}{If $r_n= o(n^{1/4})$, then $\big \| r_n\big(\hat{\vT}_n - \vT^*\big) \big \|_{L^2(R)}^2 \to 0$ in probability by Theorem \ref{thm: limit} (i). Consider the case where $n^{1/4} = O(r_n)$. 
Suppose on the contrary that $r_n\big(\hat{\vT}_n - \vT^*\big)$ were convergent in distribution in $L^2(R;\R^d)$, $r_n\big(\hat{\vT}_n - \vT^*\big) \stackrel{d}{\to} \mathbb{U}$ in $L^2(R;\R^d)$. In view of Theorem \ref{thm: limit} (i), $r_n$ must be exactly of order $n^{1/4}$. For simplicity, take $r_n = n^{1/4}$.  Since $L^2(R;\R^d)$ is  separable, $\mathbb{U}$ is tight by Ulam's theorem, so for every $\varepsilon, \delta > 0$, there exists finite $J$ such that $\Prob \left ( \sum_{j > J} \langle \bm{\varphi}_j,\mathbb{U} \rangle_{L^2(R)}^2 > \delta \right ) \le \varepsilon$ by Lemma 1.8.1 in \cite{van1996weak}. However, by Theorem \ref{thm: limit} (ii), for every $j \in \NN$, $n^{1/4} \langle \bm{\varphi}_j, \hat{\vT}_n - \vT^* \rangle_{L^2(R)} \to 0$ in probability, so $\sum_{j=1}^J \langle \bm{\varphi}_j,\mathbb{U} \rangle_{L^2(R)}^2 = 0$ a.s. This implies that $\| \mathbb{U} \|_{L^2(R)}^2 = \sum_{j=1}^\infty \langle \bm{\varphi}_j,\mathbb{U} \rangle_{L^2(R)}^2= 0$ a.s., which contradicts the conclusion of Theorem \ref{thm: limit} (i).}
\qed

\medskip

6.4.2.~\textit{Proof of Theorem \ref{thm: dual_holder_limit}}.
\ 
\textcolor{black}{\underline{Part (i)}. 
Since $\R^N$ is finite dimensional, it suffices to show that $\Gamma$ is Hadamard differentiable at $\vz^*$. 
We first observe that
\begin{align*}
    \|\Gamma(\vz') - \Gamma(\vz)\|_{\BB^\ast} 
    &= \sup_{\bm\varphi \in \BB_1} \left | \sum_{i \neq j} \int_{C_i(\vz)\cap C_j(\vz')} \langle \bm\varphi, \vx_i - \vx_j \rangle \, dR \right |\\
    &\leq \left ( \sup_{\bm\varphi \in \BB_1} \big \|  \|\bm\varphi \| \big \|_\infty \right) \max_{i \neq j} \|\vx_i - \vx_j\| \sum_{i} R(C_i(\vz) \Delta C_i(\vz')).
\end{align*}
Lemma \ref{lem: quant_stability} then implies that $\Gamma$ is locally Lipschitz. Hence, it suffices to verify Gateaux differentiability of $\Gamma$.
The proof is analogous to that of Theorem \ref{thm: Hadamard_diff}~(ii), and we follow the notation used there. Observe that
\[
\Gamma(\vz + t \vh)(\bm \varphi) - \Gamma(\vz) (\bm \varphi)= \sum_{1 \le i \neq j \le N} \int_{C_i(\vz^*) \cap C_j(\vz^* + t\vh)} \langle \vx_j - \vx_i, \bm{\varphi}(\vy) \rangle \ dR(\vy).
\]
Pick any $i,j \in \{ 1,\dots, N \}$ with $i \ne j$, 
and consider the case where $h_j > h_i$ (the $h_j < h_i$ case is analogous). Arguing as in the proof of Theorem \ref{thm: Hadamard_diff}~(ii) and using the fact that  the functions in $\BB_1$ are uniformly bounded, we see that the expansion
\[
\int_{C_i(\vz^*) \cap C_j(\vz^* + t\vh)} \langle \vx_j - \vx_i, \bm{\varphi}(\vy) \rangle \, dR(\vy) = \int_{\tilde D_{ij}(\vh,t)} \langle \vx_j - \vx_i, \bm{\varphi}(\vy) \rangle \, dR(\vy) + O(t^2).
\]
holds uniformly over $\bm \varphi \in \BB_1$, and the first term on the right hand side reduces to
\[
 t \int_0^{\frac{h_j - h_i}{\|\vx_i - \vx_j\|}} \left ( \int_{D_{ij}} \langle \vx_j - \vx_i,\bm{\varphi}(\vy + t\tau \vv) \rangle  \rho(\vy + t\tau \vv) \,d\cH^{d-1}(\vy)\right ) \,d\tau. 
\]
Now, using the fact that the functions in $\BB_1$ are uniformly bounded and uniformly equicontinuous, we see that the above integral can be expanded as
\[
\begin{split}
\frac{t(h_j - h_i)}{\|\vx_i - \vx_j\|} \int_{D_{ij}}  \langle \vx_j - \vx_i, \bm{\varphi}(\vy) \rangle \rho(\vy) \, d\cH^{d-1}(\vy) + o(t)
\end{split}
\]
uniformly over $\bm \varphi \in \BB_1$. Hence, $\Gamma$ is Gateaux differentiable with derivative (in the direction $\vh$) given by
\[
\bm{\varphi} \mapsto \sum_{i < j}\frac{h_j - h_i}{\|\vx_i - \vx_j\|} \int_{D_{ij}}  \langle \vx_j - \vx_i, \bm{\varphi}(\vy) \rangle \rho(\vy) \, d\cH^{d-1}(\vy) = \sum_{i=1}^N h_ib_i^*(\bm{\varphi}).
\]
}

\textcolor{black}{\underline{Part (ii)}. This follows from the CLT for $\hat{\vz}_n$ and the extended delta method.}
\qed

\medskip

6.4.3.~\textit{Proof of Proposition \ref{prop: asymptotic efficiency}}. \ 
\textcolor{black}{\underline{Part (i)}. 
We first show that $\vT_n(h) = \Gamma (\vz^*(\vp_{n,\vh}))$ is regular. Recall that $\Gamma$ is Fr\'echet differentiable at $\vz^* = \vz^*(\vp)$ with derivative $\vh \mapsto \sum_{i=1}^N h_i b_i^* = \Gamma_{\vz^*}'(\vh)$ and $\vz^*(\cdot)$ is Hadamard differentiable at $\vp$ tangentially to $\langle \vone \rangle^\bot$ with derivative $\vh \mapsto \bm{B}\vh$. The chain rule for Hadamard differentiable maps implies that the composition map $\Gamma \circ \vz^*(\cdot)$ is Hadamard differentiable at $\vp$ tangentially to $\langle \vone \rangle^\bot$ with derivative $\Gamma_{\vz^*}' (\bm{B} \vh)$. Since $\sqrt{n}(\vp_{n,\vh}-\vp) = \vp \odot \vh = (p_ih_i)_{i=1}^N \in \langle \vone \rangle^\bot$, the parameter sequence $\vT_n (\vh)$ satisfies that $\sqrt{n}(\vT_n (\vh) - \vT_n(\vzero)) \to  \Gamma_{\vz^*}' \big(\bm{B} (\vp \odot \vh) \big) =: \dot \vT(\vh)$, which is linear in $\vh$ and continuous from $H$ into $\BB^*$.}

\textcolor{black}{
Next, we wish to show that $\hat{\vT}_n$ is regular. Hadamard differentiability of $\vz^*(\cdot)$ at $\vp$ implies that, under $P_{n,\vzero}$, 
\[
\Big (\sqrt{n}(\hat{\vz}_n - \vz^*),\log \frac{dP_{n,\vh}}{dP_{n,0}} \Big) \stackrel{d}{\to} (\vW,\Lambda) \sim \cN \left ( \binom{\vzero}{-\frac{\|\vh\|_{\vp}^2}{2}}, \begin{pmatrix} \bm{B}\bm{A}\bm{B}^\intercal & \bm{B} (\vp \odot \vh) \\ (\vp \odot \vh)^{\intercal} \bm{B}^\intercal & \| \vh \|_{\vp}^2  \end{pmatrix}\right ).
\]
Combining Hadamard differentiability of $\Gamma$ at $\vz^*$, we have
\[
\Big (\sqrt{n}(\hat{\vT}_n - \vT_n(\vzero)),\log \frac{dP_{n,\vh}}{dP_{n,0}} \Big) \stackrel{d}{\to} (\Gamma_{\vz^*}'(\vW),\Lambda) \quad \text{in $\BB^* \times \R$}
\]
under $P_{n,\vzero}$. Now,
by Le Cam's third lemma \cite[Theorem 3.10.7]{van1996weak}, we have
\[
 \sqrt{n}(\hat{\vT}_n - \vT_n(\vzero)) \stackrel{d}{\to} \Gamma_{\vz^*}'(\bm{B}(\vp \odot \vh)+\vW) \stackrel{d}{=} \Gamma_{\vz^*}'(\bm{B}(\vp \odot \vh))+ \mathbb{G}
\]
under $P_{n,\vh}$, so that
\[
 \sqrt{n}(\hat{\vT}_n - \vT_n(\vh)) = \sqrt{n}(\hat \vT_n - \vT_n(\vzero)) -\underbrace{\sqrt{n}(\vT_n (\vh) - \vT_n(\vzero))}_{=\Gamma_{\vz^*}' (\bm{B} (\vp \odot \vh)) + o(1)} \stackrel{d}{\to}  \mathbb{G} 
\]
under $P_{n,\vh}$. This establishes regularity of $\hat{\vT}_n$.}

\textcolor{black}{
To prove the final claim of Part (i), from Theorem 3.11.2 in \cite{van1996weak}, it suffices to verify that for every $b^{**} \in \BB^{**}$ (the dual of $\BB^*$), $b^{**} (\mathbb{G})$ has a centered Gaussian distribution with variance $\sup_{h \in H: \| \vh \|_{\vp} = 1}|(b^{**} \circ \dot{\vT})(\vh)|^2$.
Recalling $\bm{A} = \mathrm{diag}\{ p_1,\dots,p_N \} - \vp\vp^\intercal$, we have $(b^{**} \circ \dot{\vT})(\vh) = \langle \bm{B}(\vp \odot \vh), \bm \beta \rangle = \langle \vp \odot \vh, \bm{B}^\intercal \bm \beta \rangle = \langle \vh,\bm{B}^\intercal \bm \beta \rangle_{\vp} = \vh^\intercal \bm{A} \bm{B}^\intercal \bm \beta$ for $\vh \in H$ with $\bm \beta = (b^{**}(b_i^*))_{i=1}^N$. Maximizing $|\vh^\intercal \bm{A} \bm{B}^\intercal \bm \beta|^2$ w.r.t. $\vh \in H$ with $\| \vh \|_{\vp}^2 = \vh^\intercal \bm{A} \vh = 1$ gives $\bm{\beta}^\intercal \bm{B} \bm{A} \bm{B}^\intercal \bm{\beta}$, which agrees with the variance of  
 $b^{**} (\mathbb{G}) = \sum_{i=1}^N W_i b^{**}(b_i^*) = \bm \beta^\intercal \vW$. Hence, Theorem 3.11.2 in \cite{van1996weak} applies, and the final claim of Part (i) follows.}

\textcolor{black}{\underline{Part (ii)}.
The first claim follows from Part (i) combined with  Theorem 3.11.5 in \cite{van1996weak}. For the moment convergence, since $I$ is finite, it suffices to verify that for every $\vh \in H$ and $k \in \NN$, $\sup_n \E_{\vh} \left [  \| \sqrt{n} (\hat{\vT}_n - \vT_n(\vh)) \|_{\BB^*}^k \right] < \infty$.
Observe that
    \begin{align*}
    \| \sqrt{n} (\hat{\vT}_n - \vT_n(\vh)) \|_{\BB^\ast} &= \sup_{\bm \varphi \in \BB_1} \left |  \sqrt{n} \sum_{i \neq j} \int_{C_i(\hat{\vz}_n) \cap C_j(\vz^\ast (\vp_{n,\vh}))} \langle \bm\varphi(\vy), \vx_i - \vx_j \rangle \,\rho(\vy) \,d d\vy \right | \\
    &\le O(1) \cdot   \sqrt{n} \sum_{i \neq j} R\big(C_i(\hat{\vz}_n) \cap C_j(\vz^\ast (\vp_{n,\vh})) \big) \\
    &\le O(1) \cdot \sqrt{n}\| \hat{\vp}_n-\vp_{n,\vh}\|_{1},
    \end{align*}
    where the final inequality follows from Theorem 1.3 in \cite{bansil2022quantitative}.
An application of the Marcinkiewicz-Zygmund inequality yields that $\sup_n \E_{\vh}\big [ \big ( \sqrt{n}\| \hat{\vp}_n-\vp_{n,\vh}\|_{1} \big)^k\big ] < \infty$, completing the proof.}
\qed

\section{Concluding remarks}
\label{sec: conclusion}

In this paper, we have established limit theorems for the integral error and linear functionals of the empirical OT map in the semidiscrete setting. The main ingredients of the proof are new stability estimates of these functionals with respect to the dual potential vector, whose derivation requires a careful analysis of the facial structures of the Laguerre cells. For both functionals, we have also established the consistency of the nonparametric bootstrap. These results enable constructing confidence sets/bands and lay the groundwork for principled statistical inference for (functionals of) the OT map in the semidiscrete setting. \textcolor{black}{Finally, we have shown that, while the empirical OT map does not possess nontrivial weak limits in $L^2(R)$, it satisfies a CLT in a dual H\"{o}lder space, and established its asymptotic efficiency for estimating the OT map when viewed as elements of the said Banach space.}

\begin{funding}
The second author was supported by NSF grants  CCF-2046018, and DMS-2210368, and the IBM Academic Award.
The third author was partially supported by NSF grants DMS-1952306, DMS-2014636, and DMS-2210368.
\end{funding}

\bibliographystyle{imsart-number}
\bibliography{main.bib}

\begin{thebibliography}{83}
% BibTex style file: imsart-number.bst, 2017-11-03
% Default style options (sort=1,type=number).
% Used options (sort=1,type=number).

\bibitem{altschuler2022asymptotics}
\begin{barticle}[author]
\bauthor{\bsnm{Altschuler},~\bfnm{Jason~M.}\binits{J.~M.}}, \bauthor{\bsnm{Niles-Weed},~\bfnm{Jonathan}\binits{J.}} \AND \bauthor{\bsnm{Stromme},~\bfnm{Austin~J.}\binits{A.~J.}}
(\byear{2022}).
\btitle{Asymptotics for semidiscrete entropic optimal transport}.
\bjournal{SIAM Journal on Mathematical Analysis}
\bvolume{54}
\bpages{1718--1741}.
\end{barticle}
\endbibitem

\bibitem{ambrosio2005}
\begin{bbook}[author]
\bauthor{\bsnm{Ambrosio},~\bfnm{Luigi}\binits{L.}}, \bauthor{\bsnm{Gigli},~\bfnm{Nicola}\binits{N.}} \AND \bauthor{\bsnm{Savar{\'e}},~\bfnm{Giuseppe}\binits{G.}}
(\byear{2008}).
\btitle{Gradient Flows: in Metric Spaces and in the Space of Probability Measures}.
\bpublisher{Springer Science \& Business Media}.
\end{bbook}
\endbibitem

\bibitem{aurenhammer1987power}
\begin{barticle}[author]
\bauthor{\bsnm{Aurenhammer},~\bfnm{Franz}\binits{F.}}
(\byear{1987}).
\btitle{Power diagrams: properties, algorithms and applications}.
\bjournal{SIAM Journal on Computing}
\bvolume{16}
\bpages{78--96}.
\end{barticle}
\endbibitem

\bibitem{aurenhammer1998minkowski}
\begin{barticle}[author]
\bauthor{\bsnm{Aurenhammer},~\bfnm{Franz}\binits{F.}}, \bauthor{\bsnm{Hoffmann},~\bfnm{Friedrich}\binits{F.}} \AND \bauthor{\bsnm{Aronov},~\bfnm{Boris}\binits{B.}}
(\byear{1998}).
\btitle{Minkowski-type theorems and least-squares clustering}.
\bjournal{Algorithmica}
\bvolume{20}
\bpages{61--76}.
\end{barticle}
\endbibitem

\bibitem{bansil2022quantitative}
\begin{barticle}[author]
\bauthor{\bsnm{Bansil},~\bfnm{Mohit}\binits{M.}} \AND \bauthor{\bsnm{Kitagawa},~\bfnm{Jun}\binits{J.}}
(\byear{2022}).
\btitle{Quantitative stability in the geometry of semi-discrete optimal transport}.
\bjournal{International Mathematics Research Notices}
\bvolume{2022}
\bpages{7354--7389}.
\end{barticle}
\endbibitem

\bibitem{beirlant2020center}
\begin{barticle}[author]
\bauthor{\bsnm{Beirlant},~\bfnm{J.}\binits{J.}}, \bauthor{\bsnm{Buitendag},~\bfnm{S.}\binits{S.}}, \bauthor{\bsnm{Del~Barrio},~\bfnm{E.}\binits{E.}}, \bauthor{\bsnm{Hallin},~\bfnm{M.}\binits{M.}} \AND \bauthor{\bsnm{Kamper},~\bfnm{F.}\binits{F.}}
(\byear{2020}).
\btitle{Center-outward quantiles and the measurement of multivariate risk}.
\bjournal{Insurance: Mathematics and Economics}
\bvolume{95}
\bpages{79--100}.
\end{barticle}
\endbibitem

\bibitem{bobkov1999isoperimetric}
\begin{barticle}[author]
\bauthor{\bsnm{Bobkov},~\bfnm{Sergey~G.}\binits{S.~G.}}
(\byear{1999}).
\btitle{Isoperimetric and analytic inequalities for log-concave probability measures}.
\bjournal{The Annals of Probability}
\bvolume{27}
\bpages{1903--1921}.
\end{barticle}
\endbibitem

\bibitem{bobkov1997iso}
\begin{barticle}[author]
\bauthor{\bsnm{Bobkov},~\bfnm{Sergey~G.}\binits{S.~G.}} \AND \bauthor{\bsnm{Houdr{\'e}},~\bfnm{C.}\binits{C.}}
(\byear{1997}).
\btitle{{Isoperimetric constants for product probability measures}}.
\bjournal{The Annals of Probability}
\bvolume{25}
\bpages{184--205}.
\end{barticle}
\endbibitem

\bibitem{boucheron2013concentration}
\begin{bbook}[author]
\bauthor{\bsnm{Boucheron},~\bfnm{St{\'e}phane}\binits{S.}}, \bauthor{\bsnm{Lugosi},~\bfnm{G{\'a}bor}\binits{G.}} \AND \bauthor{\bsnm{Massart},~\bfnm{Pascal}\binits{P.}}
(\byear{2013}).
\btitle{Concentration Inequalities: A Nonasymptotic Theory of Independence}.
\bpublisher{Oxford University Press}.
\end{bbook}
\endbibitem

\bibitem{bowyer1981computing}
\begin{barticle}[author]
\bauthor{\bsnm{Bowyer},~\bfnm{Adrian}\binits{A.}}
(\byear{1981}).
\btitle{Computing dirichlet tessellations}.
\bjournal{The Computer Journal}
\bvolume{24}
\bpages{162--166}.
\end{barticle}
\endbibitem

\bibitem{brenier1991polar}
\begin{barticle}[author]
\bauthor{\bsnm{Brenier},~\bfnm{Yann}\binits{Y.}}
(\byear{1991}).
\btitle{Polar factorization and monotone rearrangement of vector-valued functions}.
\bjournal{Communications on Pure and Applied Mathematics}
\bvolume{44}
\bpages{375--417}.
\end{barticle}
\endbibitem

\bibitem{carlier2016vector}
\begin{barticle}[author]
\bauthor{\bsnm{Carlier},~\bfnm{Guillaume}\binits{G.}}, \bauthor{\bsnm{Chernozhukov},~\bfnm{Victor}\binits{V.}} \AND \bauthor{\bsnm{Galichon},~\bfnm{Alfred}\binits{A.}}
(\byear{2016}).
\btitle{Vector quantile regression: an optimal transport approach}.
\bjournal{The Annals of Statistics}
\bvolume{44}
\bpages{1165--1192}.
\end{barticle}
\endbibitem

\bibitem{carlier2022convergence}
\begin{barticle}[author]
\bauthor{\bsnm{Carlier},~\bfnm{Guillaume}\binits{G.}}, \bauthor{\bsnm{Pegon},~\bfnm{Paul}\binits{P.}} \AND \bauthor{\bsnm{Tamanini},~\bfnm{Luca}\binits{L.}}
(\byear{2023}).
\btitle{Convergence rate of general entropic optimal transport costs}.
\bjournal{Calculus of Variations and Partial Differential Equations}
\bvolume{62}
\bpages{116}.
\end{barticle}
\endbibitem

\bibitem{castillo2013nonparametric}
\begin{barticle}[author]
\bauthor{\bsnm{Castillo},~\bfnm{I}\binits{I.}} \AND \bauthor{\bsnm{Nickl},~\bfnm{R}\binits{R.}}
(\byear{2013}).
\btitle{Nonparametric Bernstein-von Mises theorem in Gaussian white noise}.
\bjournal{Annals of Statistics}
\bvolume{41}
\bpages{1999--2028}.
\end{barticle}
\endbibitem

\bibitem{castillo2014bernstein}
\begin{barticle}[author]
\bauthor{\bsnm{Castillo},~\bfnm{Isma{\"e}l}\binits{I.}} \AND \bauthor{\bsnm{Nickl},~\bfnm{Richard}\binits{R.}}
(\byear{2014}).
\btitle{On the Bernstein--von Mises phenomenon for nonparametric Bayes procedures}.
\bjournal{The Annals of Statistics}
\bvolume{42}
\bpages{1941--1969}.
\end{barticle}
\endbibitem

\bibitem{chernozhukov2017detailed}
\begin{barticle}[author]
\bauthor{\bsnm{Chernozhukov},~\bfnm{Victor}\binits{V.}}, \bauthor{\bsnm{Chetverikov},~\bfnm{Denis}\binits{D.}} \AND \bauthor{\bsnm{Kato},~\bfnm{Kengo}\binits{K.}}
(\byear{2017}).
\btitle{Detailed proof of {N}azarov's inequality}.
\bjournal{arXiv preprint arXiv:1711.10696}.
\end{barticle}
\endbibitem

\bibitem{chernozhukov2020generic}
\begin{barticle}[author]
\bauthor{\bsnm{Chernozhukov},~\bfnm{Victor}\binits{V.}}, \bauthor{\bsnm{Fernandez-Val},~\bfnm{Ivan}\binits{I.}}, \bauthor{\bsnm{Melly},~\bfnm{Blaise}\binits{B.}} \AND \bauthor{\bsnm{W{\"u}thrich},~\bfnm{Kaspar}\binits{K.}}
(\byear{2020}).
\btitle{Generic inference on quantile and quantile effect functions for discrete outcomes}.
\bjournal{Journal of the American Statistical Association}
\bvolume{115}
\bpages{123--137}.
\end{barticle}
\endbibitem

\bibitem{chernozhukov2017monge}
\begin{barticle}[author]
\bauthor{\bsnm{Chernozhukov},~\bfnm{Victor}\binits{V.}}, \bauthor{\bsnm{Galichon},~\bfnm{Alfred}\binits{A.}}, \bauthor{\bsnm{Hallin},~\bfnm{Marc}\binits{M.}} \AND \bauthor{\bsnm{Henry},~\bfnm{Marc}\binits{M.}}
(\byear{2017}).
\btitle{Monge--{K}antorovich depth, quantiles, ranks and signs}.
\bjournal{The Annals of Statistics}
\bvolume{45}
\bpages{223--256}.
\end{barticle}
\endbibitem

\bibitem{chow2003probability}
\begin{bbook}[author]
\bauthor{\bsnm{Chow},~\bfnm{Yuan~Shih}\binits{Y.~S.}} \AND \bauthor{\bsnm{Teicher},~\bfnm{Henry}\binits{H.}}
(\byear{2003}).
\btitle{Probability Theory: Independence, Interchangeability, Martingales}.
\bpublisher{Springer Science \& Business Media}.
\end{bbook}
\endbibitem

\bibitem{cgal1996cgal}
\begin{bmisc}[author]
\bauthor{\bsnm{Consortium},~\bfnm{CGAL}\binits{C.}}
(\byear{1996}).
\btitle{CGAL: Computational Geometry Algorithms Library}.
\end{bmisc}
\endbibitem

\bibitem{cuturi2013lightspeed}
\begin{barticle}[author]
\bauthor{\bsnm{Cuturi},~\bfnm{Marco}\binits{M.}}
(\byear{2013}).
\btitle{Sinkhorn distances: Lightspeed computation of optimal transport}.
\bjournal{Advances in Neural Information Processing Systems}
\bvolume{26}.
\end{barticle}
\endbibitem

\bibitem{Davydov1998}
\begin{bbook}[author]
\bauthor{\bsnm{Davydov},~\bfnm{Y.~A.}\binits{Y.~A.}}, \bauthor{\bsnm{Lifschits},~\bfnm{M.~A.}\binits{M.~A.}} \AND \bauthor{\bsnm{Smorodina},~\bfnm{N.~V.}\binits{N.~V.}}
(\byear{1998}).
\btitle{Local Properties of Distributions of Stochastic Functionals}.
\bseries{Translation of {M}athematical {M}onographs}.
\bpublisher{American Mathematical Society}.
\end{bbook}
\endbibitem

\bibitem{de2015power}
\begin{barticle}[author]
\bauthor{\bsnm{De~Goes},~\bfnm{Fernando}\binits{F.}}, \bauthor{\bsnm{Wallez},~\bfnm{Corentin}\binits{C.}}, \bauthor{\bsnm{Huang},~\bfnm{Jin}\binits{J.}}, \bauthor{\bsnm{Pavlov},~\bfnm{Dmitry}\binits{D.}} \AND \bauthor{\bsnm{Desbrun},~\bfnm{Mathieu}\binits{M.}}
(\byear{2015}).
\btitle{Power particles: an incompressible fluid solver based on power diagrams}.
\bjournal{ACM Transactions on Graphics}
\bvolume{34}
\bpages{1--11}.
\end{barticle}
\endbibitem

\bibitem{deb2021rates}
\begin{barticle}[author]
\bauthor{\bsnm{Deb},~\bfnm{Nabarun}\binits{N.}}, \bauthor{\bsnm{Ghosal},~\bfnm{Promit}\binits{P.}} \AND \bauthor{\bsnm{Sen},~\bfnm{Bodhisattva}\binits{B.}}
(\byear{2021}).
\btitle{Rates of estimation of optimal transport maps using plug-in estimators via barycentric projections}.
\bjournal{Advances in Neural Information Processing Systems}
\bvolume{34}.
\end{barticle}
\endbibitem

\bibitem{del2022central}
\begin{barticle}[author]
\bauthor{\bparticle{del} \bsnm{Barrio},~\bfnm{Eustasio}\binits{E.}}, \bauthor{\bsnm{Gonz{\'a}lez-Sanz},~\bfnm{Alberto}\binits{A.}} \AND \bauthor{\bsnm{Loubes},~\bfnm{Jean-Michel}\binits{J.-M.}}
(\byear{2022}).
\btitle{Central limit theorems for semidiscrete {W}asserstein distances}.
\bjournal{arXiv preprint arXiv:2202.06380}.
\end{barticle}
\endbibitem

\bibitem{del2019clt}
\begin{barticle}[author]
\bauthor{\bparticle{del} \bsnm{Barrio},~\bfnm{Eustasio}\binits{E.}} \AND \bauthor{\bsnm{Loubes},~\bfnm{Jean-Michel}\binits{J.-M.}}
(\byear{2019}).
\btitle{{Central limit theorems for empirical transportation cost in general dimension}}.
\bjournal{The Annals of Probability}
\bvolume{47}
\bpages{926 -- 951}.
\end{barticle}
\endbibitem

\bibitem{divol2022optimal}
\begin{barticle}[author]
\bauthor{\bsnm{Divol},~\bfnm{Vincent}\binits{V.}}, \bauthor{\bsnm{Niles-Weed},~\bfnm{Jonathan}\binits{J.}} \AND \bauthor{\bsnm{Pooladian},~\bfnm{Aram-Alexandre}\binits{A.-A.}}
(\byear{2022}).
\btitle{Optimal transport map estimation in general function spaces}.
\bjournal{arXiv preprint arXiv:2212.03722}.
\end{barticle}
\endbibitem

\bibitem{dudley1969speed}
\begin{barticle}[author]
\bauthor{\bsnm{Dudley},~\bfnm{Richard~M.}\binits{R.~M.}}
(\byear{1969}).
\btitle{The speed of mean {Glivenko-Cantelli} convergence}.
\bjournal{The Annals of Mathematical Statistics}
\bvolume{40}
\bpages{40--50}.
\end{barticle}
\endbibitem

\bibitem{dumbgen1993nondifferentiable}
\begin{barticle}[author]
\bauthor{\bsnm{D{\"u}mbgen},~\bfnm{Lutz}\binits{L.}}
(\byear{1993}).
\btitle{On nondifferentiable functions and the bootstrap}.
\bjournal{Probability Theory and Related Fields}
\bvolume{95}
\bpages{125--140}.
\end{barticle}
\endbibitem

\bibitem{ekeland2012comonotonic}
\begin{barticle}[author]
\bauthor{\bsnm{Ekeland},~\bfnm{Ivar}\binits{I.}}, \bauthor{\bsnm{Galichon},~\bfnm{Alfred}\binits{A.}} \AND \bauthor{\bsnm{Henry},~\bfnm{Marc}\binits{M.}}
(\byear{2012}).
\btitle{Comonotonic measures of multivariate risks}.
\bjournal{Mathematical Finance: An International Journal of Mathematics, Statistics and Financial Economics}
\bvolume{22}
\bpages{109--132}.
\end{barticle}
\endbibitem

\bibitem{evans1991measure}
\begin{bbook}[author]
\bauthor{\bsnm{Evans},~\bfnm{Lawrence~Craig}\binits{L.~C.}} \AND \bauthor{\bsnm{Gariepy},~\bfnm{Ronald~F.}\binits{R.~F.}}
(\byear{1991}).
\btitle{Measure Theory and Fine Properties of Functions}.
\bpublisher{CRC Press}.
\end{bbook}
\endbibitem

\bibitem{fan2003nonlinear}
\begin{bbook}[author]
\bauthor{\bsnm{Fan},~\bfnm{Jianqing}\binits{J.}} \AND \bauthor{\bsnm{Yao},~\bfnm{Qiwei}\binits{Q.}}
(\byear{2003}).
\btitle{Nonlinear Time Series: Nonparametric and Parametric Methods}.
\bpublisher{Springer}.
\end{bbook}
\endbibitem

\bibitem{fang2019}
\begin{barticle}[author]
\bauthor{\bsnm{Fang},~\bfnm{Zheng}\binits{Z.}} \AND \bauthor{\bsnm{Santos},~\bfnm{Andres}\binits{A.}}
(\byear{2019}).
\btitle{Inference on directionally differentiable functions}.
\bjournal{The Review of Economic Studies}
\bvolume{86}
\bpages{377-412}.
\end{barticle}
\endbibitem

\bibitem{fournier2015rate}
\begin{barticle}[author]
\bauthor{\bsnm{Fournier},~\bfnm{Nicolas}\binits{N.}} \AND \bauthor{\bsnm{Guillin},~\bfnm{Arnaud}\binits{A.}}
(\byear{2015}).
\btitle{On the rate of convergence in {W}asserstein distance of the empirical measure}.
\bjournal{Probability Theory and Related Fields}
\bvolume{162}
\bpages{707--738}.
\end{barticle}
\endbibitem

\bibitem{gallouet2018lagrangian}
\begin{barticle}[author]
\bauthor{\bsnm{Gallou{\"e}t},~\bfnm{Thomas~O.}\binits{T.~O.}} \AND \bauthor{\bsnm{M{\'e}rigot},~\bfnm{Quentin}\binits{Q.}}
(\byear{2018}).
\btitle{A {L}agrangian scheme {\`a} la {B}renier for the incompressible {E}uler equations}.
\bjournal{Foundations of Computational Mathematics}
\bvolume{18}
\bpages{835--865}.
\end{barticle}
\endbibitem

\bibitem{ghosal2019multivariate}
\begin{barticle}[author]
\bauthor{\bsnm{Ghosal},~\bfnm{Promit}\binits{P.}} \AND \bauthor{\bsnm{Sen},~\bfnm{Bodhisattva}\binits{B.}}
(\byear{2022}).
\btitle{Multivariate ranks and quantiles using optimal transport: Consistency, rates and nonparametric testing}.
\bjournal{The Annals of Statistics}
\bvolume{50}
\bpages{1012--1037}.
\end{barticle}
\endbibitem

\bibitem{goldfeld2022limit}
\begin{barticle}[author]
\bauthor{\bsnm{Goldfeld},~\bfnm{Ziv}\binits{Z.}}, \bauthor{\bsnm{Kato},~\bfnm{Kengo}\binits{K.}}, \bauthor{\bsnm{Nietert},~\bfnm{Sloan}\binits{S.}} \AND \bauthor{\bsnm{Rioux},~\bfnm{Gabriel}\binits{G.}}
(\byear{2022}).
\btitle{Limit distribution theory for smooth $ p $-{W}asserstein distances}.
\bjournal{The Annals of Applied Probability}
\bvolume{34}
\bpages{2447--2487}.
\end{barticle}
\endbibitem

\bibitem{goldfeld2022EOT}
\begin{barticle}[author]
\bauthor{\bsnm{Goldfeld},~\bfnm{Ziv}\binits{Z.}}, \bauthor{\bsnm{Kato},~\bfnm{Kengo}\binits{K.}}, \bauthor{\bsnm{Rioux},~\bfnm{Gabriel}\binits{G.}} \AND \bauthor{\bsnm{Sadhu},~\bfnm{Ritwik}\binits{R.}}
(\byear{2024}).
\btitle{Limit theorems for entropic optimal transport maps and the Sinkhorn divergence}.
\bjournal{Electronic Journal of Statistics}
\bvolume{18}
\bpages{980--1041}.
\end{barticle}
\endbibitem

\bibitem{gonzalez2022weak}
\begin{barticle}[author]
\bauthor{\bsnm{Gonz\'{a}lez-Sanz},~\bfnm{Alberto}\binits{A.}}, \bauthor{\bsnm{Loubes},~\bfnm{Jean-Michel}\binits{J.-M.}} \AND \bauthor{\bsnm{Niles-Weed},~\bfnm{Jonathan}\binits{J.}}
(\byear{2022}).
\btitle{Weak limits of entropy regularized Optimal Transport; potentials, plans and divergences}.
\bjournal{arXiv preprint: arXiv 2207.07427}.
\end{barticle}
\endbibitem

\bibitem{hallin2021distribution}
\begin{barticle}[author]
\bauthor{\bsnm{Hallin},~\bfnm{Marc}\binits{M.}}, \bauthor{\bparticle{del} \bsnm{Barrio},~\bfnm{Eustasio}\binits{E.}}, \bauthor{\bsnm{Cuesta-Albertos},~\bfnm{Juan}\binits{J.}} \AND \bauthor{\bsnm{Matr{\'a}n},~\bfnm{Carlos}\binits{C.}}
(\byear{2021}).
\btitle{{Distribution and quantile functions, ranks and signs in dimension $d$: A measure transportation approach}}.
\bjournal{The Annals of Statistics}
\bvolume{49}
\bpages{1139--1165}.
\end{barticle}
\endbibitem

\bibitem{hartmann2020semi}
\begin{barticle}[author]
\bauthor{\bsnm{Hartmann},~\bfnm{Valentin}\binits{V.}} \AND \bauthor{\bsnm{Schuhmacher},~\bfnm{Dominic}\binits{D.}}
(\byear{2020}).
\btitle{Semi-discrete optimal transport: a solution procedure for the unsquared {E}uclidean distance case}.
\bjournal{Mathematical Methods of Operations Research}
\bvolume{92}
\bpages{133--163}.
\end{barticle}
\endbibitem

\bibitem{hutter2021minimax}
\begin{barticle}[author]
\bauthor{\bsnm{H{\"u}tter},~\bfnm{Jan-Christian}\binits{J.-C.}} \AND \bauthor{\bsnm{Rigollet},~\bfnm{Philippe}\binits{P.}}
(\byear{2021}).
\btitle{Minimax estimation of smooth optimal transport maps}.
\bjournal{The Annals of Statistics}
\bvolume{49}
\bpages{1166--1194}.
\end{barticle}
\endbibitem

\bibitem{juditsky2003nonparametric}
\begin{barticle}[author]
\bauthor{\bsnm{Juditsky},~\bfnm{Anatoli}\binits{A.}} \AND \bauthor{\bsnm{Lambert-Lacroix},~\bfnm{Sophie}\binits{S.}}
(\byear{2003}).
\btitle{Nonparametric confidence set estimation}.
\bjournal{Mathematical Methods of Statistics}
\bvolume{12}
\bpages{410--428}.
\end{barticle}
\endbibitem

\bibitem{kannan1995isoperimetric}
\begin{barticle}[author]
\bauthor{\bsnm{Kannan},~\bfnm{Ravi}\binits{R.}}, \bauthor{\bsnm{Lov{\'a}sz},~\bfnm{L{\'a}szl{\'o}}\binits{L.}} \AND \bauthor{\bsnm{Simonovits},~\bfnm{Mikl{\'o}s}\binits{M.}}
(\byear{1995}).
\btitle{Isoperimetric problems for convex bodies and a localization lemma}.
\bjournal{Discrete \& Computational Geometry}
\bvolume{13}
\bpages{541--559}.
\end{barticle}
\endbibitem

\bibitem{kato2011note}
\begin{barticle}[author]
\bauthor{\bsnm{Kato},~\bfnm{Kengo}\binits{K.}}
(\byear{2011}).
\btitle{A note on moment convergence of bootstrap M-estimators}.
\bjournal{Statistics \& Decisions}
\bvolume{28}
\bpages{51--61}.
\end{barticle}
\endbibitem

\bibitem{kitagawa2019convergence}
\begin{barticle}[author]
\bauthor{\bsnm{Kitagawa},~\bfnm{Jun}\binits{J.}}, \bauthor{\bsnm{M{\'e}rigot},~\bfnm{Quentin}\binits{Q.}} \AND \bauthor{\bsnm{Thibert},~\bfnm{Boris}\binits{B.}}
(\byear{2019}).
\btitle{Convergence of a {N}ewton algorithm for semi-discrete optimal transport}.
\bjournal{Journal of the European Mathematical Society}
\bvolume{21}
\bpages{2603--2651}.
\end{barticle}
\endbibitem

\bibitem{knott1984optimal}
\begin{barticle}[author]
\bauthor{\bsnm{Knott},~\bfnm{Martin}\binits{M.}} \AND \bauthor{\bsnm{Smith},~\bfnm{Cyril~S.}\binits{C.~S.}}
(\byear{1984}).
\btitle{On the optimal mapping of distributions}.
\bjournal{Journal of Optimization Theory and Applications}
\bvolume{43}
\bpages{39--49}.
\end{barticle}
\endbibitem

\bibitem{kunsch1989}
\begin{barticle}[author]
\bauthor{\bsnm{Kunsch},~\bfnm{Hans~R.}\binits{H.~R.}}
(\byear{1989}).
\btitle{{The jackknife and the bootstrap for general stationary observations}}.
\bjournal{The Annals of Statistics}
\bvolume{17}
\bpages{1217--1241}.
\end{barticle}
\endbibitem

\bibitem{kusuoka2001law}
\begin{barticle}[author]
\bauthor{\bsnm{Kusuoka},~\bfnm{Shigeo}\binits{S.}}
(\byear{2001}).
\btitle{On law invariant coherent risk measures}.
\bjournal{Advances in Mathematical Economics}
\bvolume{3}
\bpages{83--95}.
\end{barticle}
\endbibitem

\bibitem{lahiri2013resampling}
\begin{bbook}[author]
\bauthor{\bsnm{Lahiri},~\bfnm{S.~N.}\binits{S.~N.}}
(\byear{2013}).
\btitle{Resampling Methods for Dependent Data}.
\bpublisher{Springer Science \& Business Media}.
\end{bbook}
\endbibitem

\bibitem{leclerc2019pysdot}
\begin{bmisc}[author]
\bauthor{\bsnm{LeClerc},~\bfnm{Hugo}\binits{H.}} \AND \bauthor{\bsnm{Merig\'ot},~\bfnm{Quentin}\binits{Q.}}
(\byear{2019}).
\btitle{Pysdot: semi-discrete optimal transportation tools}.
\bnote{https://github.com/sd-ot/pysdot}.
\end{bmisc}
\endbibitem

\bibitem{levy2015numerical}
\begin{barticle}[author]
\bauthor{\bsnm{L{\'e}vy},~\bfnm{Bruno}\binits{B.}}
(\byear{2015}).
\btitle{A Numerical Algorithm for $ L\_ $\{$2$\}$ $ Semi-Discrete Optimal Transport in 3D}.
\bjournal{ESAIM: Mathematical Modelling and Numerical Analysis}
\bvolume{49}
\bpages{1693--1715}.
\end{barticle}
\endbibitem

\bibitem{levy2015geogram}
\begin{barticle}[author]
\bauthor{\bsnm{L{\'e}vy},~\bfnm{Bruno}\binits{B.}} \AND \bauthor{\bsnm{Filbois},~\bfnm{Alain}\binits{A.}}
(\byear{2015}).
\btitle{Geogram: a library for geometric algorithms}.
\end{barticle}
\endbibitem

\bibitem{levy2018notions}
\begin{barticle}[author]
\bauthor{\bsnm{L{\'e}vy},~\bfnm{Bruno}\binits{B.}} \AND \bauthor{\bsnm{Schwindt},~\bfnm{Erica~L.}\binits{E.~L.}}
(\byear{2018}).
\btitle{Notions of optimal transport theory and how to implement them on a computer}.
\bjournal{Computers \& Graphics}
\bvolume{72}
\bpages{135--148}.
\end{barticle}
\endbibitem

\bibitem{machado2005quantiles}
\begin{barticle}[author]
\bauthor{\bsnm{Machado},~\bfnm{Jos{\'e} A.~F.}\binits{J.~A.~F.}} \AND \bauthor{\bsnm{Silva},~\bfnm{J.~M. C.~Santos}\binits{J.~M. C.~S.}}
(\byear{2005}).
\btitle{Quantiles for counts}.
\bjournal{Journal of the American Statistical Association}
\bvolume{100}
\bpages{1226--1237}.
\end{barticle}
\endbibitem

\bibitem{manole2021plugin}
\begin{barticle}[author]
\bauthor{\bsnm{Manole},~\bfnm{Tudor}\binits{T.}}, \bauthor{\bsnm{Balakrishnan},~\bfnm{Sivaraman}\binits{S.}}, \bauthor{\bsnm{Niles-Weed},~\bfnm{Jonathan}\binits{J.}} \AND \bauthor{\bsnm{Wasserman},~\bfnm{Larry}\binits{L.}}
(\byear{2021}).
\btitle{Plugin estimation of smooth optimal transport maps}.
\bjournal{arXiv preprint arXiv:2107.12364}.
\end{barticle}
\endbibitem

\bibitem{manole2023central}
\begin{barticle}[author]
\bauthor{\bsnm{Manole},~\bfnm{Tudor}\binits{T.}}, \bauthor{\bsnm{Balakrishnan},~\bfnm{Sivaraman}\binits{S.}}, \bauthor{\bsnm{Niles-Weed},~\bfnm{Jonathan}\binits{J.}} \AND \bauthor{\bsnm{Wasserman},~\bfnm{Larry}\binits{L.}}
(\byear{2023}).
\btitle{Central limit theorems for smooth optimal transport maps}.
\bjournal{arXiv preprint arXiv:2312.12407}.
\end{barticle}
\endbibitem

\bibitem{mcneil2015quantitative}
\begin{bbook}[author]
\bauthor{\bsnm{McNeil},~\bfnm{Alexander~J}\binits{A.~J.}}, \bauthor{\bsnm{Frey},~\bfnm{R{\"u}diger}\binits{R.}} \AND \bauthor{\bsnm{Embrechts},~\bfnm{Paul}\binits{P.}}
(\byear{2015}).
\btitle{Quantitative risk management: concepts, techniques and tools-revised edition}.
\bpublisher{Princeton university press}.
\end{bbook}
\endbibitem

\bibitem{merigot2011multiscale}
\begin{binproceedings}[author]
\bauthor{\bsnm{M{\'e}rigot},~\bfnm{Quentin}\binits{Q.}}
(\byear{2011}).
\btitle{A multiscale approach to optimal transport}.
In \bbooktitle{Computer Graphics Forum}
\bvolume{30}
\bpages{1583--1592}.
\bpublisher{Wiley Online Library}.
\end{binproceedings}
\endbibitem

\bibitem{mikami2004monge}
\begin{barticle}[author]
\bauthor{\bsnm{Mikami},~\bfnm{Toshio}\binits{T.}}
(\byear{2004}).
\btitle{Monge's problem with a quadratic cost by the zero-noise limit of h-path processes}.
\bjournal{Probability Theory and Related Fields}
\bvolume{129}
\bpages{245--260}.
\end{barticle}
\endbibitem

\bibitem{milman2007role}
\begin{barticle}[author]
\bauthor{\bsnm{Milman},~\bfnm{Emanuel}\binits{E.}}
(\byear{2009}).
\btitle{On the role of convexity in isoperimetry, spectral gap and concentration}.
\bjournal{Inventiones mathematicae}
\bvolume{177}
\bpages{1--43}.
\end{barticle}
\endbibitem

\bibitem{mohar1991eigenvalues}
\begin{barticle}[author]
\bauthor{\bsnm{Mohar},~\bfnm{Bojan}\binits{B.}}
(\byear{1991}).
\btitle{Eigenvalues, diameter, and mean distance in graphs}.
\bjournal{Graphs and Combinatorics}
\bvolume{7}
\bpages{53--64}.
\end{barticle}
\endbibitem

\bibitem{niles2019estimation}
\begin{barticle}[author]
\bauthor{\bsnm{Niles-Weed},~\bfnm{Jonathan}\binits{J.}} \AND \bauthor{\bsnm{Rigollet},~\bfnm{Philippe}\binits{P.}}
(\byear{2022}).
\btitle{Estimation of Wasserstein distances in the spiked transport model}.
\bjournal{Bernoulli}
\bvolume{28}
\bpages{2663--2688}.
\end{barticle}
\endbibitem

\bibitem{panaretos2020invitation}
\begin{bbook}[author]
\bauthor{\bsnm{Panaretos},~\bfnm{Victor~M.}\binits{V.~M.}} \AND \bauthor{\bsnm{Zemel},~\bfnm{Yoav}\binits{Y.}}
(\byear{2020}).
\btitle{An Invitation to Statistics in Wasserstein space}.
\bpublisher{Springer Nature}.
\end{bbook}
\endbibitem

\bibitem{peyre2019computational}
\begin{barticle}[author]
\bauthor{\bsnm{Peyr{\'e}},~\bfnm{Gabriel}\binits{G.}} \AND \bauthor{\bsnm{Cuturi},~\bfnm{Marco}\binits{M.}}
(\byear{2019}).
\btitle{Computational optimal transport: With applications to data science}.
\bjournal{Foundations and Trends{\textregistered} in Machine Learning}
\bvolume{11}
\bpages{355--607}.
\end{barticle}
\endbibitem

\bibitem{pooladian2022debiaser}
\begin{barticle}[author]
\bauthor{\bsnm{Pooladian},~\bfnm{Aram-Alexandre}\binits{A.-A.}}, \bauthor{\bsnm{Cuturi},~\bfnm{Marco}\binits{M.}} \AND \bauthor{\bsnm{Niles-Weed},~\bfnm{Jonathan}\binits{J.}}
(\byear{2022}).
\btitle{Debiaser Beware: Pitfalls of Centering Regularized Transport Maps}.
\bjournal{International Conference on Machine Learning}
\bvolume{39}.
\end{barticle}
\endbibitem

\bibitem{pooladian2023minimax}
\begin{barticle}[author]
\bauthor{\bsnm{Pooladian},~\bfnm{Aram-Alexandre}\binits{A.-A.}}, \bauthor{\bsnm{Divol},~\bfnm{Vincent}\binits{V.}} \AND \bauthor{\bsnm{Niles-Weed},~\bfnm{Jonathan}\binits{J.}}
(\byear{2023}).
\btitle{Minimax estimation of discontinuous optimal transport maps: The semi-discrete case}.
\bjournal{International Conference on Machine Learning}
\bvolume{40}.
\end{barticle}
\endbibitem

\bibitem{pooladian2021entropic}
\begin{barticle}[author]
\bauthor{\bsnm{Pooladian},~\bfnm{Aram-Alexandre}\binits{A.-A.}} \AND \bauthor{\bsnm{Niles-Weed},~\bfnm{Jonathan}\binits{J.}}
(\byear{2021}).
\btitle{Entropic estimation of optimal transport maps}.
\bjournal{arXiv preprint arXiv:2109.12004}.
\end{barticle}
\endbibitem

\bibitem{rigollet2022sample}
\begin{barticle}[author]
\bauthor{\bsnm{Rigollet},~\bfnm{Philippe}\binits{P.}} \AND \bauthor{\bsnm{Stromme},~\bfnm{Austin~J.}\binits{A.~J.}}
(\byear{2022}).
\btitle{On the sample complexity of entropic optimal transport}.
\bjournal{arXiv preprint arXiv: 2206.13472}.
\end{barticle}
\endbibitem

\bibitem{romisch2004}
\begin{bincollection}[author]
\bauthor{\bsnm{R\"{o}misch},~\bfnm{Werner}\binits{W.}}
(\byear{2004}).
\btitle{Delta method, infinite dimensional}.
In \bbooktitle{Encyclopedia of Statistical Sciences}
\bpublisher{Wiley}.
\end{bincollection}
\endbibitem

\bibitem{ruschendorf2006law}
\begin{barticle}[author]
\bauthor{\bsnm{R{\"u}schendorf},~\bfnm{Ludger}\binits{L.}}
(\byear{2006}).
\btitle{Law invariant convex risk measures for portfolio vectors}.
\bjournal{Statistics \& Risk Modeling}
\bvolume{24}
\bpages{97--108}.
\end{barticle}
\endbibitem

\bibitem{santambrogio15}
\begin{bbook}[author]
\bauthor{\bsnm{Santambrogio},~\bfnm{Filippo}\binits{F.}}
(\byear{2015}).
\btitle{Optimal Transport for Applied Mathematicians}.
\bpublisher{Birk{\"a}user}.
\end{bbook}
\endbibitem

\bibitem{santambrogio2017euclidean}
\begin{barticle}[author]
\bauthor{\bsnm{Santambrogio},~\bfnm{Filippo}\binits{F.}}
(\byear{2017}).
\btitle{$\{$Euclidean, metric, and Wasserstein$\}$ gradient flows: an overview}.
\bjournal{Bulletin of Mathematical Sciences}
\bvolume{7}
\bpages{87--154}.
\end{barticle}
\endbibitem

\bibitem{shapiro1990concepts}
\begin{barticle}[author]
\bauthor{\bsnm{Shapiro},~\bfnm{Alexander}\binits{A.}}
(\byear{1990}).
\btitle{On concepts of directional differentiability}.
\bjournal{Journal of Optimization Theory and Applications}
\bvolume{66}
\bpages{477--487}.
\end{barticle}
\endbibitem

\bibitem{tanabe1992exact}
\begin{barticle}[author]
\bauthor{\bsnm{Tanabe},~\bfnm{Kunio}\binits{K.}} \AND \bauthor{\bsnm{Sagae},~\bfnm{Masahiko}\binits{M.}}
(\byear{1992}).
\btitle{An exact {C}holesky decomposition and the generalized inverse of the variance--covariance matrix of the multinomial distribution, with applications}.
\bjournal{Journal of the Royal Statistical Society: Series B (Methodological)}
\bvolume{54}
\bpages{211--219}.
\end{barticle}
\endbibitem

\bibitem{torous2021optimal}
\begin{barticle}[author]
\bauthor{\bsnm{Torous},~\bfnm{William}\binits{W.}}, \bauthor{\bsnm{Gunsilius},~\bfnm{Florian}\binits{F.}} \AND \bauthor{\bsnm{Rigollet},~\bfnm{Philippe}\binits{P.}}
(\byear{2021}).
\btitle{An optimal transport approach to causal inference}.
\bjournal{arXiv preprint arXiv:2108.05858}.
\end{barticle}
\endbibitem

\bibitem{vanderVaart1998asymptotic}
\begin{bbook}[author]
\bauthor{\bparticle{van~der} \bsnm{Vaart},~\bfnm{Aad~W.}\binits{A.~W.}}
(\byear{1998}).
\btitle{Asymptotic Statistics}.
\bpublisher{Cambridge University Press}.
\end{bbook}
\endbibitem

\bibitem{van1996weak}
\begin{bbook}[author]
\bauthor{\bparticle{van~der} \bsnm{Vaart},~\bfnm{Aad~W.}\binits{A.~W.}} \AND \bauthor{\bsnm{Wellner},~\bfnm{Jon~A}\binits{J.~A.}}
(\byear{1996}).
\btitle{Weak convergence and Empirical Processes: with Applications to Statistics}.
\bpublisher{Springer}.
\end{bbook}
\endbibitem

\bibitem{villani2008optimal}
\begin{bbook}[author]
\bauthor{\bsnm{Villani},~\bfnm{C\'{e}dric}\binits{C.}}
(\byear{2008}).
\btitle{Optimal Transport: Old and New}.
\bpublisher{Springer}.
\end{bbook}
\endbibitem

\bibitem{wasserman2006all}
\begin{bbook}[author]
\bauthor{\bsnm{Wasserman},~\bfnm{Larry}\binits{L.}}
(\byear{2006}).
\btitle{All of Nonparametric Statistics}.
\bpublisher{Springer Science \& Business Media}.
\end{bbook}
\endbibitem

\bibitem{watson1981computing}
\begin{barticle}[author]
\bauthor{\bsnm{Watson},~\bfnm{David~F.}\binits{D.~F.}}
(\byear{1981}).
\btitle{Computing the $n$-dimensional {D}elaunay tessellation with application to {V}oronoi polytopes}.
\bjournal{The Computer Journal}
\bvolume{24}
\bpages{167--172}.
\end{barticle}
\endbibitem

\bibitem{weed2019sharp}
\begin{barticle}[author]
\bauthor{\bsnm{Weed},~\bfnm{Jonathan}\binits{J.}} \AND \bauthor{\bsnm{Bach},~\bfnm{Francis}\binits{F.}}
(\byear{2019}).
\btitle{Sharp asymptotic and finite-sample rates of convergence of empirical measures in {Wasserstein} distance}.
\bjournal{Bernoulli}
\bvolume{25}
\bpages{2620--2648}.
\end{barticle}
\endbibitem

\bibitem{weissman2003inequalities}
\begin{barticle}[author]
\bauthor{\bsnm{Weissman},~\bfnm{Tsachy}\binits{T.}}, \bauthor{\bsnm{Ordentlich},~\bfnm{Erik}\binits{E.}}, \bauthor{\bsnm{Seroussi},~\bfnm{Gadiel}\binits{G.}}, \bauthor{\bsnm{Verdu},~\bfnm{Sergio}\binits{S.}} \AND \bauthor{\bsnm{Weinberger},~\bfnm{Marcelo~J}\binits{M.~J.}}
(\byear{2003}).
\btitle{Inequalities for the L1 deviation of the empirical distribution}.
\bjournal{Hewlett-Packard Labs, Tech. Rep}.
\end{barticle}
\endbibitem

\end{thebibliography}

\end{document}